\newcommand{\ep}{\epsilon}
\newcommand{\si}{\sigma}
\newcommand{\Si}{\Sigma}
\newcommand{\Ga}{\Gamma}
\newcommand{\bC}{\mathbb{C}}
\newcommand{\bK}{\mathbb{K}}
\newcommand{\bL}{\mathbb{L}}
\newcommand{\bP}{\mathbb{P}}
\newcommand{\bQ}{\mathbb{Q}}
\newcommand{\bR}{\mathbb{R}}
\newcommand{\bS}{\mathbb{S}}
\newcommand{\bT}{\mathbb{T}}
\newcommand{\bZ}{\mathbb{Z}}
\newcommand{\btau}{\boldsymbol{\tau}}
\newcommand{\bsi}{{\boldsymbol{\si}}}
\newcommand{\cB}{\mathcal{B}}
\newcommand{\cE}{\mathcal{E}}
\newcommand{\cK}{\mathcal{K}}
\newcommand{\cL}{\mathcal{L}}
\newcommand{\cI}{\mathcal{I}}
\newcommand{\cM}{\mathcal{M}}
\newcommand{\cO}{\mathcal{O}}
\newcommand{\cS}{\mathcal{S}}
\newcommand{\cT}{\mathcal{T}}
\newcommand{\cU}{\mathcal{U}}
\newcommand{\cX}{\mathcal{X}}
\newcommand{\fp}{\mathfrak{p}}
\newcommand{\fQ}{\mathfrak{Q}}
\newcommand{\fX}{\mathfrak{X}}
\newcommand{\End}{\mathrm{End}}
\newcommand{\Spec}{\mathrm{Spec}}
\newcommand{\age}{\mathrm{age}}
\newcommand{{\inv} }{\mathrm{inv}}
\newcommand{\ev}{\mathrm{ev}}
\newcommand{\Aut}{\mathrm{Aut}}
\newcommand{\Res}{\mathrm{Res}}
\newcommand{\val}{ {\mathrm{val}} }
\newcommand{\vir}{{\mathrm{vir}}}
\newcommand{\CR}{  {\mathrm{CR}}  }
\newcommand{\pt}{ {\mathrm{pt}}}
\newcommand{\Pic}{ {\mathrm{Pic}}}
\newcommand{\NE}{ {\mathrm{NE}}}
\newcommand{\one}{\mathbf{1}}
\newcommand{\bu}{\mathbf{u}}
\newcommand{\bp}{\mathbf{p}}
\newcommand{\bGa}{\mathbf{\Ga}}
\newcommand{\su}{\mathsf{u}}
\newcommand{\sv}{\mathsf{v}}
\newcommand{\sw}{\mathsf{w}}
\newcommand{\tfQ}{ \widetilde{\mathfrak{Q}}}
\newcommand{\txi}{ {\widetilde{\xi}} }
\newcommand{\tS}{\widetilde{S}}
\newcommand{\that}{\hat{t}}
\newcommand{\vGa}{\vec{\Ga}}
\newcommand{\Mbar}{\overline{\cM}}
\newcommand{\spa}{ {\ \ \,} }
\newcommand{\ST}{ {S_{\bT}} }
\newcommand{\RT}{ {R_{\bT}} }
\newcommand{\bST}{ {\bar{S}_{\bT}} }
\newcommand{\bSTQ}{ \bST[\![ \tfQ,\tau'' ]\!] }
\newcommand{\nov}{\Lambda_{\mathrm{nov}}}
\newcommand{\novT}{\bar{\Lambda}^{\bT}_{\mathrm{nov}} }
\newcommand{\ThreeSphere}{\mathbb{S}^3}
\newcommand{\Zp}{\mathbb{Z}_p}
\newcommand{\Knot}{\mathcal{K}}
\newcommand{\torus}{K_{r,s}}
\newcommand{\T}{T_{r,s}}
\newcommand{\C}{\mathbb{C}}
\newcommand{\ResolvedConi}{[\mathcal{O}(-1)\oplus\mathcal{O}(-1)\rightarrow\mathbb{P}^1]}
\newcommand{\ReducedConi}{[(\mathcal{O}_{\mathbb{P}^1}(-1)\oplus\mathcal{O}_{\mathbb{P}^1}(-1))/\Zp]}
\newcommand{\X}{\mathcal{X}}
\newcommand{\Lrs}{L_{r,s}}
\newcommand{\AlphaMu}{\langle\frac{\mu r}{p}\rangle}
\newcommand\HcrX[1]{H^{#1}_{\text{CR},\T}(\X)}
\newcommand\GWPol[1]{F^{\X,\Lrs}_{0,1}(\btau_#1; X)}
\newcommand\GWPgn[3]{F^{\X,\Lrs}_{#2,#3}(\btau_{#1}; X_1,\dots,X_{#3})}
\newcommand\GWgl[1]{\langle #1 \rangle_{g,l,d,\vec{\mu}}^{\X,\Lrs,\T}}
\newcommand\GWol[1]{\langle #1 \rangle_{0,l,d,\mu}^{\X,\Lrs,\T}}
\newcommand\DescdentGwol[2]{\langle #1 \rangle_{0,#2,d}^{\X,\T}}
\newcommand\eff{\text{eff}}
\newcommand{\wbeta}{w(\beta)}
\newcommand\Lens[1]{L(p,#1)}
\newcommand\Bigbig{\text{big}}
\newcommand\Jbig{J^\Bigbig_\bT(z)}
\newcommand\Mgldmu{\overline{\cM}_{g,l,d,\vec{\mu}}}
\newcommand\fh{\mathfrak{h}}
\newcommand\BoldEta{{\bm{\eta}}}
\newcommand\SphereTorus{{\tilde{K}_{r,s}}}
\newtheorem{lma}{Lemma}[section]
\newtheorem{coro}[lma]{Corollary}
\newtheorem{defn}[lma]{Definition}
\newtheorem{prop}[lma]{Proposition}
\newtheorem{theorem}[lma]{Theorem}
\newtheorem{remark}[lma]{Remark}
\newtheorem{dummy}{dummy}[section]
\theoremstyle{definition}
\newtheorem{definition}[dummy]{Definition}
\begin{document}
\title{Torus knots in Lens spaces, open Gromov-Witten invariants, and topological recursion}

\author{Jinghao Yu}
\address{Jinghao Yu, Department of Mathematical Sciences, Tsinghua University, Haidian District, Beijing 100084, China}
\email{yjh21@mails.tsinghua.edu.cn}

\author{Zhengyu Zong}
\address{Zhengyu Zong, Department of Mathematical Sciences, Tsinghua University, He er Building, Room 103,
Tsinghua University, Haidian District, Beijing 100084, China}
\email{zyzong@mail.tsinghua.edu.cn}

\begin{abstract}
    Starting from a torus knot $\cK$ in the lens space $L(p,-1)$, we construct a Lagrangian sub-manifold $L_{\cK}$ in $\cX=\big(\cO_{\bP^1}(-1)\oplus \cO_{\bP^1}(-1)\big)/\bZ_p$ under the conifold transition. We prove a mirror theorem which relates the all genus open-closed Gromov-Witten invariants of $(\cX,L_{\cK})$ to the topological recursion on the B-model
    spectral curve. This verifies a conjecture in \cite{Bor-Bri} in the case of lens space.
    \end{abstract}

    \maketitle

    \tableofcontents

    \section{Introduction}\label{introduction}

    \subsection{Historical background and motivation}
    In \cite{Wi89}, Witten noticed that there is a deep relation between the HOMFLY polynomials of knots and the Wilson loop observables of Chern-Simons quantum field theory for $G=U(N)$ gauge theories. In the case of the three-sphere $\bS^3$, this relation can be described as follows.

    Let $A$ be a connection on $\bS^3$ for the gauge group $G=U(N)$ and $R$
    be a representation of $G$. The Chern-Simons
    action functional is defined as
    \[
    S(A)=\frac{k}{4\pi} \int_{\bS^3} \mathrm{Tr}_R\left( A\wedge d A +\frac{2}{3}
      A\wedge A \wedge A\right),
    \]
    where $k$ is the coupling constant. The partition function of this theory is defined by path-integrals in physics
    \[
    Z(\bS^3)={\int \mathcal D Ae^{\sqrt{-1}S(A)}}.
    \]
    Let $K\cong S^1\hookrightarrow \bS^3$ be a framed, oriented knot. In physics, the normalized vacuum expectation value is also defined by path-integrals
    \[
    W_R(K)=\frac{1}{Z(\bS^3)}\int \mathcal D A e^{\sqrt{-1}S(A)}\mathrm{Tr}_RU_K,
    \]
    where $U_K$ is the holonomy around $K$. When $R$ is the
    fundamental representation of $G=U(N)$, $W_R(K)$ is related to the
    HOMFLY polynomial $P_K(q,\lambda)$ of $K$, which is mathematically well-defined, as below
    \[
    W_R(K)=\lambda
    \frac{\lambda^{\frac{1}{2}}-\lambda^{-\frac{1}{2}}}{q^{\frac{1}{2}}-q^{-\frac{1}{2}}}P_K(q,\lambda),
    \]
    where
    \[
    q=e^{\frac{2\pi\sqrt{-1}}{k+N}},\quad \lambda=q^N.
    \]

    Later in \cite{Wi95}, Witten noted that the Chern-Simons theory, in turn, describes the target space physics of A-model topological strings. In \cite{GV, OV}, the gauge theory invariant $W_R(K)$ was conjectured to be related to the open Gromov-Witten theory
    of the resolved conifold $X=\cO_{\bP^1}(-1)\oplus \cO_{\bP^1}(-1)$ with Lagrangian boundary condition $L_K$ under the large-$N$ duality and the conifold transition. Here $L_K$ is a Lagrangian sub-manifold of $\cO_{\bP^1}(-1)\oplus \cO_{\bP^1}(-1)$ constructed from the knot $K$ under the conifold transition \cite{DSV, Ta01, Ko07, LMV}. When $K$ is an unknot, the conjecture can be precisely formulated as the famous
    Mari\~no-Vafa formula \cite{MaVa} concerning cubic Hodge integrals and is later proved in \cite{LLZ, OkPa}. In this case, $L_K$ belongs to a class of Lagrangians called Aganagic-Vafa branes \cite{AgVa,AKV} (Harvey-Lawson type Lagrangian). The open Gromov-Witten invariants for $(X,L_K)$ in this situation are defined in \cite{KaLiu, Liu02}.

    For a general knot $K$, open Gromov-Witten theory is usually difficult to define for $(X,L_K)$. When $K$ is a torus knot, i.e. a knot that can be realized on a real torus in $S^3$, one can still use the localization technique to define such open Gromov-Witten invariants \cite{DSV}. In \cite{DSV}, Diaconescu-Shende-Vafa also prove the conjecture on the correspondence between HOMFLY polynomials and open Gromov-Witten invariants of winding number one.

    One obtains a richer picture when mirror symmetry comes in the story. In \cite{BKMP09, BKMP10}, an all genus open-closed mirror symmetry is proposed for general toric Calabi-Yau 3-folds/3-orbifolds, known as the Remodeling Conjecture. The Lagrangian sub-manifolds on A-model are the Aganagic-Vafa branes and the higher genus B-model is obtained from
    Eynard-Orantin's topological recursion \cite{EO07} on the mirror curve. The Remodeling Conjecture for smooth toric Calabi-Yau 3-folds is proved in \cite{EO15} and for general toric Calabi-Yau 3-orbifolds is proved in \cite{FLZ16}. The resolved conifold $X=\cO_{\bP^1}(-1)\oplus \cO_{\bP^1}(-1)$ is a toric Calabi-Yau 3-fold and one immediately obtains the mirror symmetry for $(X,L_K)$ with $K$ the trivial knot. In \cite{BEM}, Brini-Eynard-Mari\~no propose a modified spectral curve for a torus knot $K_{r,s}$ with coprime $(r,s)$. They conjecture that this curve should be the correct B-model mirror to $(X,L_{K_{r,s}})$. This conjecture is proved in \cite{FZ19}.

    The bridge connecting the above two dualities (large $N$ duality and mirror symmetry) is the matrix model theory. In \cite{BEO}, the colored HOMFLY polynomials of torus knots $K$ are expressed as a matrix model. The loop equation of this matrix model gives us the topological recursion on the mirror curve of $(X,L_{K})$. Therefore the result in \cite{BEO} implies that the Eynard-Orantin invariants of the mirror curve are equivalent to the colored HOMFLY polynomial of the knot $K$. Combining the result in \cite{BEO}, the mirror symmetry result in \cite{FZ19} implies that the following three invariants are equivalent for any torus knot $K$ (see Figure \ref{fig:relation-i})
    \begin{enumerate}
    \item the open-closed Gromov-Witten invariants of $(X,L_K)$;\\
    \item the Eynard-Orantin invariants of the mirror curve;\\
    \item the colored HOMFLY polynomial of the knot $K$.
    \end{enumerate}

    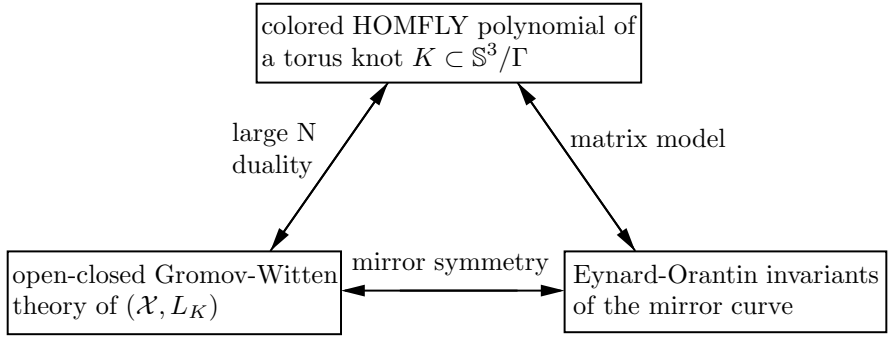
\begin{figure}[h]
       \centering

       \tikzset{every picture/.style={line width=0.75pt}} 

       \begin{tikzpicture}[x=0.75pt,y=0.75pt,yscale=-1,xscale=1]

       \draw    (202.67,200) -- (283.76,200) ;
       \draw [shift={(285.76,200)}, rotate = 180] [fill={rgb, 255:red, 0; green, 0; blue, 0 }  ][line width=0.08]  [draw opacity=0] (12,-3) -- (0,0) -- (12,3) -- cycle    ;
       \draw    (262.02,200) -- (174.99,200) ;
       \draw [shift={(172.99,200)}, rotate = 360] [fill={rgb, 255:red, 0; green, 0; blue, 0 }  ][line width=0.08]  [draw opacity=0] (12,-3) -- (0,0) -- (12,3) -- cycle    ;
       \draw    (321.37,180) -- (263.17,96.64) ;
       \draw [shift={(262.02,95)}, rotate = 55.08] [fill={rgb, 255:red, 0; green, 0; blue, 0 }  ][line width=0.08]  [draw opacity=0] (12,-3) -- (0,0) -- (12,3) -- cycle    ;
       \draw    (262.02,95) -- (320.23,178.36) ;
       \draw [shift={(321.37,180)}, rotate = 235.08] [fill={rgb, 255:red, 0; green, 0; blue, 0 }  ][line width=0.08]  [draw opacity=0] (12,-3) -- (0,0) -- (12,3) -- cycle    ;
       \draw    (196.73,95) -- (138.53,178.36) ;
       \draw [shift={(137.38,180)}, rotate = 304.92] [fill={rgb, 255:red, 0; green, 0; blue, 0 }  ][line width=0.08]  [draw opacity=0] (12,-3) -- (0,0) -- (12,3) -- cycle    ;
       \draw    (137.38,180) -- (195.59,96.64) ;
       \draw [shift={(196.73,95)}, rotate = 124.92] [fill={rgb, 255:red, 0; green, 0; blue, 0 }  ][line width=0.08]  [draw opacity=0] (12,-3) -- (0,0) -- (12,3) -- cycle    ;
       \draw   (130.43,55.21) -- (324.37,55.21) -- (324.37,95) -- (130.43,95) -- cycle ;
       \draw   (4.88,180.21) -- (172.99,180.21) -- (172.99,222) -- (4.88,222) -- cycle ;
       \draw   (285.76,180.21) -- (448.02,180.21) -- (448.02,221) -- (285.76,221) -- cycle ;

       \draw (131.41,60) node [anchor=north west][inner sep=0.75pt]   [align=left] {colored HOMFLY polynomial of \\a torus knot $\displaystyle K\subset \ThreeSphere$};
       \draw (5.86,185.21) node [anchor=north west][inner sep=0.75pt]   [align=left] {open-closed Gromov-Witten\\theory of ($X, L_K$)};
       \draw (288.54,185.21) node [anchor=north west][inner sep=0.75pt]   [align=left] {Eynard-Orantin invariants \\of the mirror curve};
       \draw (116.54,114) node [anchor=north west][inner sep=0.75pt]   [align=left] {large N\\ duality};
       \draw (287.01,117) node [anchor=north west][inner sep=0.75pt]   [align=left] {matrix model};
       \draw (176.95,179) node [anchor=north west][inner sep=0.75pt]   [align=left] {mirror symmetry};

       \end{tikzpicture}\renewcommand{\thefigure}{1.A}
       \caption{Interrelations of mentioned topics}\label{fig:relation-i}
   \end{figure}
    In particular, the equivalence of item (1) and item (3) is the large $N$ duality for \emph{colored} HOMFLY polynomial and open-closed Gromov-Witten invariants of $(X,L_K)$ with \emph{arbitrary} winding numbers. This equivalence generalizes the result in \cite{DSV}.

    In \cite{Bor-Bri}, the dualities in Figure \ref{fig:relation-i} is conjecturally generalized to torus knots $K$ in spherical Seifert 3-manifolds, which are quotients $\bS^3/\Gamma$ of the three-sphere by the free action of a finite isometry group. Under the conifold transition, we obtain a Lagrangian sub-manifold $L_K$ in $\cX=\big(\cO_{\bP^1}(-1)\oplus \cO_{\bP^1}(-1)\big)/\Gamma$. In this case, the following three invariants are conjecturally equivalent (see Figure \ref{fig:relation-ii})
    \begin{enumerate}
    \item the open-closed Gromov-Witten invariants of $(\cX,L_K)$;\\
    \item the Eynard-Orantin invariants of the mirror curve of $(\cX,L_K)$;\\
    \item the colored HOMFLY polynomial of the knot $K$.
    \end{enumerate}

    \begin{figure}[h]
       \centering

       \tikzset{every picture/.style={line width=0.75pt}} 

       \begin{tikzpicture}[x=0.75pt,y=0.75pt,yscale=-1,xscale=1]

              \draw    (202.67,200) -- (283.76,200) ;
              \draw [shift={(285.76,200)}, rotate = 180] [fill={rgb, 255:red, 0; green, 0; blue, 0 }  ][line width=0.08]  [draw opacity=0] (12,-3) -- (0,0) -- (12,3) -- cycle    ;
              \draw    (262.02,200) -- (174.99,200) ;
              \draw [shift={(172.99,200)}, rotate = 360] [fill={rgb, 255:red, 0; green, 0; blue, 0 }  ][line width=0.08]  [draw opacity=0] (12,-3) -- (0,0) -- (12,3) -- cycle    ;
              \draw    (321.37,180) -- (263.17,96.64) ;
              \draw [shift={(262.02,95)}, rotate = 55.08] [fill={rgb, 255:red, 0; green, 0; blue, 0 }  ][line width=0.08]  [draw opacity=0] (12,-3) -- (0,0) -- (12,3) -- cycle    ;
              \draw    (262.02,95) -- (320.23,178.36) ;
              \draw [shift={(321.37,180)}, rotate = 235.08] [fill={rgb, 255:red, 0; green, 0; blue, 0 }  ][line width=0.08]  [draw opacity=0] (12,-3) -- (0,0) -- (12,3) -- cycle    ;
              \draw    (196.73,95) -- (138.53,178.36) ;
              \draw [shift={(137.38,180)}, rotate = 304.92] [fill={rgb, 255:red, 0; green, 0; blue, 0 }  ][line width=0.08]  [draw opacity=0] (12,-3) -- (0,0) -- (12,3) -- cycle    ;
              \draw    (137.38,180) -- (195.59,96.64) ;
              \draw [shift={(196.73,95)}, rotate = 124.92] [fill={rgb, 255:red, 0; green, 0; blue, 0 }  ][line width=0.08]  [draw opacity=0] (12,-3) -- (0,0) -- (12,3) -- cycle    ;
       \draw   (130.43,55.21) -- (324.37,55.21) -- (324.37,95) -- (130.43,95) -- cycle ;
       \draw   (4.88,180.21) -- (172.99,180.21) -- (172.99,222) -- (4.88,222) -- cycle ;
       \draw   (285.76,180.21) -- (448.02,180.21) -- (448.02,221) -- (285.76,221) -- cycle ;

       \draw (131.41,60) node [anchor=north west][inner sep=0.75pt]   [align=left] {colored HOMFLY polynomial of \\a torus knot $\displaystyle K\subset \ThreeSphere/\Gamma$};
       \draw (5.86,185.21) node [anchor=north west][inner sep=0.75pt]   [align=left] {open-closed Gromov-Witten\\theory of ($\X, L_K$)};
       \draw (288.54,185.21) node [anchor=north west][inner sep=0.75pt]   [align=left] {Eynard-Orantin invariants \\of the mirror curve};
       \draw (116.54,114) node [anchor=north west][inner sep=0.75pt]   [align=left] {large N\\ duality};
       \draw (287.01,117) node [anchor=north west][inner sep=0.75pt]   [align=left] {matrix model};
       \draw (176.95,179) node [anchor=north west][inner sep=0.75pt]   [align=left] {mirror symmetry};

       \end{tikzpicture}\renewcommand{\thefigure}{1.B}
       \caption{Interrelations of mentioned topics}\label{fig:relation-ii}
   \end{figure}
    The aim of this paper is to prove the equivalence of item (1) and item (2) in the case when $\bS^3/\Gamma$ is the lens space $L(p,-1)$. In this case, $\Gamma=\bZ_p$ and $\cX=\big(\cO_{\bP^1}(-1)\oplus \cO_{\bP^1}(-1)\big)/\bZ_p$ is a toric Calabi-Yau 3-orbifold of type A singularity. We will prove that the all genus open-closed Gromov-Witten potential of $(\cX,L_K)$ is equal to the Eynard-Orantin invariants of the mirror curve under the mirror map. This is a generalization of the result in \cite{FZ19}.

    \subsection{Statement of the main result}
    We start from an $(r,s)$ torus knot $\cK$ in the lens space $L(p,-1)$. The lens space $L(p,q)$ is defined as
    \[
        \{(x,y)\in\mathbb
        C^2~|~|x|^2+|y|^2=1\}/(x,y)\sim (e^{2\pi i/p} x,e^{2\pi iq/p} y)
    \]
    We assume that $p,r,s$ are pairwisely coprime and there exists an integer $k$ such that $r+s=pk$. See Section \ref{sec:conifold transition} for a discussion on the above assumption. Let $N_\Knot^*$ be the total space of the conormal
    bundle of $\Knot$ in $\Lens{-1}$ and let $T^*\Lens{-1}$ be the total space of the cotangent bundle of $\Lens{-1}$. Then $N_\Knot^*$ is a Lagrangian submanifold
    of $T^*\Lens{-1}$ and the intersection of $N_\Knot^*$ with the zero section of $T^*\Lens{-1}$
    is the knot $\Knot$. The conifold transition describes a procedure which consists of two steps. In the first step, the zero section of $T^*\Lens{-1}$ shrinks to a point, making the limit of $T^*\Lens{-1}$ a singular variety. In the second step, one applies a small resolution to this singular variety to obtain $\cX=\big(\cO_{\bP^1}(-1)\oplus \cO_{\bP^1}(-1)\big)/\bZ_p$ which is toric Calabi-Yau 3-orbifold. Under the conifold transition, the Lagrangian submanifold $N_\Knot^*$ transforms to a Lagrangian submanifold $L_{r,s}$ in $\cX$. This step is not so straightforward since we need to first shift $N_\Knot^*$ off the zero section before applying the conifold transition. See Section \ref{sec:conifold transition} for the construction of $L_{r,s}$ under the conifold transition.

    There is an $S^1$-action on $\cX$ which preserves the Lagrangian submanifold $L_{r,s}$. Therefore, we can define the open Gromov-Witten invariants of $(\cX,L_{r,s})$ by localization (see Section \ref{sec:open-closed-GW}). Alternatively, we can also define the open Gromov-Witten invariants of $(\cX,L_{r,s})$ by using relative Gromov-Witten theory (see Section \ref{sec:relative1} and Section \ref{sec:relative2}). Let $\GWPgn{\empty}{g}{n}$ be the generating function of genus $g$, $n$ boundary open Gromov-Witten invariants of $(\cX,L_{r,s})$ (see equation \eqref{eqn:open-potential}).

    On B-model side, the mirror curve $C_q\subset(\bC^*)^2$ of $(\cX,L_{r,s})$ is defined by
    $$
    U^p + V + 1 + q_1U^pV^{-1} + \sum_{m=1}^{p-1}q_{m+1}U^m = 0.
    $$
    where $q_1,\cdots,q_p$ are parameter of complex structures on B-model. Because $r, s, p$ are pairwisely coprime, we know $r, k$ ($pk = r+s$) are coprime and we can find $\gamma, \delta\in \bZ$ such that
    \begin{equation*}
        \left(
            \begin{array}{cc}
                r & -k
                \\
                \gamma & \delta
            \end{array}
        \right)\in SL(2;\bZ)
    \end{equation*}
    By an $SL(2,\bZ)$ change of coordinates
    \[
        X= e^{-x} = U^rV^{-k}, \quad Y= e^{-y} = U^\gamma V^\delta
    \]
    as in \eqref{eqn:coord2}, we obtain new coordinate functions $X,Y$ on $C_q$. Then we apply Eynard-Orantin's topological recursion to $(C_q,X,Y)$. Eynard-Orantin's topological recursion is a recursive algorithm that produces all genus open invariants of a spectral curve
    (see Section \ref{sec:EO-recursion}). From the recursion, we get a symmetric
    meromorphic $n$-form $\omega_{g,n}$ on the product of compactified mirror curves $(\overline C_q)^n$. Let $\eta = X^{\frac{1}{r}}$. Then $\eta$ is a local coordinate of the compactified mirror curve $\overline{C}_q$ around $\mathfrak{s}_0=(X, V) = (0,-1)$. One can integrate the expansion of
    $\omega_{g,n}$ in $\eta_1,\dots,\eta_n$ around this point and define
    \[
    W_{g,n}(\eta_1,\dots,\eta_n,q)=\int_0^{\eta_1}\dots\int_0^{\eta_n} \omega_{g,n}
    \]
    for $2g-2+n>0$. The definitions of $W_{0,1}$ and $W_{0,2}$ (disk and annulus potentials) have slightly different forms (see Section \ref{sec:EO-recursion}). The following theorem is the main result of this paper (see Theorem \ref{thm:disk-mirror-theorem}, Theorem \ref{thm:main-theorem-annulus}, Theorem \ref{thm:main-theorem-stable}).
    \begin{theorem}
    Under the mirror map $\btau=\btau(q)$ and $X_k=\eta_k^r$, the power series expansion of the open Gromov-Witten amplitude $F_{g,n}(\btau;X_1,\dots,X_n)$ in $X_1,\dots,X_n$ is the part in the power
    series expansion of $(-1)^{g-1} r^n
    W_{g,n}(q,\eta_1,\dots,\eta_n)$ whose degrees of
    each $\eta_k$ are divisible by $r$.
    \end{theorem}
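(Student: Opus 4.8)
The statement packages three cases — the disk ($g=0,n=1$), the annulus ($g=0,n=2$), and the stable range ($2g-2+n>0$) — which are recorded separately as Theorem \ref{thm:disk-mirror-theorem}, Theorem \ref{thm:main-theorem-annulus}, and Theorem \ref{thm:main-theorem-stable}. The plan is to prove each case along the template of the remodeling conjecture, established for toric Calabi-Yau orbifolds in \cite{FLZ16} and for torus knots in $\ThreeSphere$ in \cite{FZ19}, but now merging the $\bZ_p$-orbifold structure of $\cX$ with the $SL(2,\bZ)$ framing change $X=U^rV^{-k},\ Y=U^\gamma V^\delta$ that produces the Lagrangian $\Lrs$. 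Throughout, the closed-string mirror theorem for the toric orbifold $\cX$ supplies the mirror map $\btau=\btau(q)$, and the $r$-fold cover $X=\eta^r$ near $\mathfrak{s}_0=(X,V)=(0,-1)$ is what accounts for both the restriction to $\eta$-degrees divisible by $r$ and, through the associated Jacobian and normalization, the overall prefactor $(-1)^{g-1}r^n$.

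First, for the disk I would compute $F_{0,1}$ from its relative/localization definition: it is a generating function of one-holed invariants, each of which reduces to a one-point relative invariant of $\cX$ along the toric divisor, evaluated by the orbifold mirror theorem. On the B-model side one expands the primitive of the Eynard–Orantin differential $\omega_{0,1}$ in the local coordinate $\eta$ at $\mathfrak{s}_0$; using the explicit curve $U^p+V+1+q_1U^pV^{-1}+\sum_{m=1}^{p-1}q_{m+1}U^m=0$ after the $SL(2,\bZ)$ substitution, I would extract its coefficients and match the $r$-divisible part with $F_{0,1}$ under the mirror map. The annulus is handled in the same spirit: $W_{0,2}$ is the integral of the normalized fundamental differential of the second kind on the genus-zero curve $\overline C_q$, while $F_{0,2}$ is the two-holed generating function, and matching their $r$-divisible parts reduces to an explicit rational computation on $\overline C_q$ once the base point and the local coordinates at the two legs are fixed.

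For the stable range I would set up the two graph-sum expansions and identify them term by term. On the B-model side the Eynard–Orantin recursion \cite{EO07} writes $\omega_{g,n}$ as a sum over trivalent graphs whose internal vertices carry the local expansions of the recursion kernel at the ramification points of $X$ and whose edges carry the Bergman kernel; on the A-model side, localization for the $S^1$-action writes $F_{g,n}$ as a sum over decorated graphs whose vertices are closed-string descendant (Hodge) integrals of $\cX$ and whose legs are the disk and annulus functions from the base cases. The bridge is the closed-string remodeling conjecture for $\cX$ proved in \cite{FLZ16}, which in Givental form identifies the Laplace transforms of the vertex contributions with the local B-model data. With the base cases supplying the leg data, I would then verify that the substitution $X_k=\eta_k^r$ intertwines the two recursions and that projecting onto $r$-divisible $\eta$-degrees, together with the covering factors coming from $X=\eta^r$, produces exactly the prefactor $(-1)^{g-1}r^n$.

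The main obstacle will be the stable case, and within it the simultaneous bookkeeping of three ingredients: the twisted-sector contributions coming from the $\bZ_p$-orbifold structure, the framing data introduced by the $SL(2,\bZ)$ change of variables defining $\Lrs$, and the $r$-divisibility projection. Concretely, I expect the delicate point to be showing that the local building blocks of the Eynard–Orantin kernel at each ramification point agree with the A-model localization weights once both the orbifold and framing transformations are applied, and that the projection onto $r$-divisible degrees commutes with the graph-sum recursion, so that the recursion closes on the physical integer-winding sector. Establishing this compatibility at the level of the recursion kernel, rather than case by case, is what makes the stable range substantially harder than the disk and annulus.
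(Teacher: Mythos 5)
Your three-case decomposition and your treatment of the disk and stable range do follow the paper's route: the disk is an explicit match between the localization/mirror-theorem formula for $F_{0,1}$ and the expansion of $v(\eta)$ at $\mathfrak{s}_0$ (Theorem \ref{thm:disk-mirror-theorem}), and the stable range is a graph-sum comparison whose bridge is the closed remodeling result of \cite{FLZ16}. The genuine gap is your annulus case. You call $\overline{C}_q$ ``the genus-zero curve'' and propose to match $W_{0,2}$ with $F_{0,2}$ by ``an explicit rational computation.'' But the compactified mirror curve here has genus $p-1$, so for every $p\geq 2$ --- which is precisely the new territory beyond \cite{FZ19} --- there is no rational parametrization: the fundamental differential $B$ is normalized against a choice of $A$-cycles and depends transcendentally on $q$, and no closed-form computation of $W_{0,2}$ is available. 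The paper's actual argument (Theorem \ref{thm:main-theorem-annulus}) avoids this entirely: it writes $C(p_1,p_2)=\tfrac{1}{2}\sum_{\bsi}\theta^0_{\bsi}(p_1)\theta^0_{\bsi}(p_2)$ (equation \eqref{eqn:C}), identifies each $\mathfrak{h}_{\eta}\cdot\int_0^\eta\rho_q^*\theta^0_{\bsi}$ with an $\cS$-operator expression through the disk theorem and Proposition \ref{prop:canonical-in-flat} (equation \eqref{eqn:xi-in-disk}), and then reconstructs $F_{0,2}$ from the factorization \eqref{eqn:two-in-one} of $V_{z_1,z_2}$ into products of $S$-operators. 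Without this (or an equivalent) mechanism your annulus case does not close, and the omission propagates: the open-leaf identification \eqref{eqn:xi-in-disk}, which is produced in the course of the annulus argument, is exactly the leg data you need for the stable range.

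A secondary misdirection concerns your stable-range plan to show that $X_k=\eta_k^r$ ``intertwines the two recursions'' and that the $r$-divisibility projection ``commutes with the graph-sum recursion.'' The paper proves no such commutation, and it would be substantially harder to do so at the level of the Eynard--Orantin kernel. Instead it compares two post-recursion, Givental-type graph sums: the B-model one of \cite{DOSS} (Theorem \ref{eqn:B-graph-sum}) and the A-model one of \cite{Zo} (Theorem \ref{thm:Zong}, Corollary \ref{eqn:graph-open}), whose vertices are pure $\psi$-integrals on $\Mbar_{g,n}$ (not Hodge integrals of $\cX$), whose edges and dilaton leaves match by the $R$-matrix identification \eqref{eqn:R-identification} from \cite{FLZ16}, and whose open leaves match by Proposition \ref{prop:disk-R-matrix} together with \eqref{eqn:xi-in-disk}. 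In this formulation the projection $\mathfrak{h}$ acts only on the open leaves, so no compatibility between projection and recursion ever needs to be established, and the prefactor $(-1)^{g-1}r^n$ comes out of the leaf normalization rather than from a covering-degree count.
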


    \subsection{Future work}

    \subsubsection{Large $N$ duality for $(\cX,L_{r,s})$}
    An interesting future work is to prove the large $N$ duality for $(\cX,L_{r,s})$. Since the result in this paper proves the mirror symmetry for $(\cX,L_{r,s})$ which is the bottom arrow in Figure \ref{fig:relation-ii}, one may complete the equivalences in Figure \ref{fig:relation-ii} by relating the colored HOMFLY polynomial of the $(r,s)$-knot $\cK$ to the Eynard-Orantin invariants of the mirror curve for $(\cX,L_{r,s})$. One way to do this is to rewrite the colored HOMFLY polynomial of $\cK$ in terms of matrix integrals as in \cite{BEO} and then use the loop equation to obtain the mirror curve and the topological recursion.

    \subsubsection{The non-abelian case}
    Another future direction is to consider the case when $\Gamma$ is of type $D$ and type $E$. In this case, the corresponding $\cX$ is a non-toric orbifold and it will be more challenging to prove the mirror symmetry for $(\cX,L_K)$. One can also try to prove the other two arrows in Figure \ref{fig:relation-ii}.

    \subsection{Overview of the paper}
    In Section \ref{sec:conifold transition}, we will construct the Lagrangian submanifold $L_{r,s}$ in $\cX$ from an $(r,s)$ torus knot $\cK$ in the lens space $L(p,-1)$ under the conifold transition. In Section \ref{sec:A}, we first review the closed descendent Gromov-Witten invariants and the corresponding graph sum formula. Then we define open Gromov-Witten invariants with respect to $(\cX,L_{r,s})$ in two different ways: by localization in the moludi space of maps from bordered Riemann surfaces (Section \ref{sec:open-closed-GW}) and by using relative Gromov-Witten theory (Section \ref{sec:relative1} and Section \ref{sec:relative2}). We also express the generating functions for open Gromov-Witten invariants in graph sums. In Section \ref{sec:B}, we construct the mirror curve for $(\cX,L_{r,s})$ and define the higher genus B-model by applying Eynard-Orantin's topological recursion to the mirror curve. Then we express the Eynard-Orantin invariants in terms of graph sums. Finally, in Section \ref{sec:ms} we prove the all genus open-closed mirror symmetry between $(\cX,L_{r,s})$ and $C_q$, based on matching graph sum ingredients, which include the localization computation on disk invariants and genus $0$ mirror theorem.

    \subsection*{Acknowledgement}
    The authors wish to thank Chiu-Chu Melissa Liu and Bohan Fang for useful discussions. The work of the second author is partially supported by NSFC grant No. 11701315.

\section{Torus knots in lens space and conifold transition}
This section gives a geometric realization of Gopakumar-Vafa large N duality for the
case of lens space $L(p,-1)$, equipped with an algebraic knot reduced from a
torus knot in $\ThreeSphere$.

\subsection{Torus knots in lens space and Lagrangians in the deformed conifold}
\label{KnotsInLensSpace}
The Lens space $L(p,q) = \ThreeSphere/\Zp$ is
\[
    \{(x,y)\in\mathbb
    C^2~|~|x|^2+|y|^2=1\}/(x,y)\sim (e^{2\pi i/p} x,e^{2\pi iq/p} y)
\]
A polynomial $f(x,y)$ is called $(p,q)$-invariant if for $\zeta = e^{2\pi i/p}$, there exists $l\in\{0,1,\dots, p-1\}$ such that
\[
    f(\zeta \cdot(x,y))= f(\zeta x,\zeta^qy)= \zeta ^lf(x,y).
\]
If $f$ is $(p,q)$-invariant, then the algebraic knot $\Knot = \{f(x,y)=0\}\subset \ThreeSphere$ can be reduced
to $L(p,q)$.
In particular, we consider the torus $(r,s)$-knot on $\SphereTorus \in\ThreeSphere$ ($r,s$ are coprime)
\[
  f(x,y) = x^r-y^s, \quad \SphereTorus = \{f(x,y)=0\}.
\]
Since
\[
    f(\zeta x,\zeta^{-1}y)= \zeta ^rx^r - \zeta ^{-s}y^s
    = \zeta^{-s}(\zeta^{r+s}x^r-y^s),
\]
$\SphereTorus$ can be reduced to $L(p,-1)$
if and only if $r+s = pk$ for some $k\in\mathbb{N}^*$.
For simplicity, we call the reduced torus knot in $\Lens{-1}$ by $\textit{torus knot in lens space}$,
denoted by $\torus$.
\begin{remark}
    In \cite{OS18,Stevan13}, a torus knot $K_{(a,b)}$ in $L(p,q)$ is described as an (a,b)-torus knot lying on the Heegaard torus $\partial V_2$.
As shown in \cite{PP21}, the following map gives the genus 1 Heegaard torus of $\Lens{-1}$.
\begin{equation*}
    \begin{aligned}
    h: \frac{\{|x|^2+|y|^2=1\} \cap \{|y|^2\geq \frac{1}{2}\}}{(x,y)\sim (e^{\frac{2\pi i}{p}} x,e^{-\frac{2\pi i}{p}} y)}
    &\longrightarrow  S^1\times D^2 =:V_2
    \\
    [x,y] &\longmapsto (y^p/|y|^p, \sqrt{2}xy/|y|)
    \end{aligned}
\end{equation*}
Let $f(x,y) = x^r-y^s$ with $r+s=pk$,
\begin{equation*}
    \begin{aligned}
        h|_{\torus}: \{ |y|\geq 1/2\}\cap \{f=0\} &\longrightarrow S^1\times D^2
        \\
        [\frac{1}{\sqrt{2}}e^{\frac{2\pi its}{p}}, \frac{1}{\sqrt{2}}e^{\frac{2\pi itr}{p}}]&\longmapsto (e^{2\pi itr}, e^{2\pi itk}), t\in [0,1]
    \end{aligned}
\end{equation*}
We see that our knot $\torus$ is the same as the knot $K_{(k,r)}$ in \cite{OS18}.
\end{remark}
\par
The deform conifold $Y_\mu, \mu>0$ is defined as
\[
    Y_\mu = \{(x,y,z,w)\in\mathbb{C}^4~|~xz-yw=\mu\}.
\]
It is a symplectic manifold by inheriting the standard symplectic form on $\bC^4$:
\begin{equation*}
    \begin{aligned}
        \omega_{\bC^4} &= \frac{\sqrt{-1}}{2}(dx\wedge d\bar{x}+ dy\wedge d\bar{y}+
        dz\wedge d\bar{z}+ dw\wedge d\bar{w})
        \\
        \omega_{Y_\mu} &= \omega_{\bC^4}|_{Y_\mu}.
    \end{aligned}
\end{equation*}
When $x=\bar{z}, y = -\bar{w}$, the equation $xz-yw=\mu$ becomes $|x|^2+|y|^2=\mu$. It means $Y_\mu$ contains a 3-sphere $\ThreeSphere_\mu$ of radius $\sqrt{\mu}$.
In fact, $\ThreeSphere_\mu$ is the fixed locus of the anti-holomorphic involution for $Y_\mu$:
\begin{equation}
    \label{involution}
    \begin{aligned}
        I: \bC^4&\rightarrow\bC^4 \\
        (x,y,z,w)&\mapsto(\bar{z}, -\bar{w}, \bar{x}, -\bar{y})
    \end{aligned}
\end{equation}
There is a $\Zp$-action on $\C^4$:
\begin{equation}\label{Zp-action}
    \begin{aligned}
        \Zp \times \C^4 &\rightarrow \C^4
        \\
        (\zeta , (x,y,z,w)) &\mapsto (\zeta x,\zeta ^{-1}y, \zeta^{-1}z,\zeta w)
    \end{aligned}
\end{equation}
It is easy to check this $\Zp$-action preserves $Y_\mu$ and $\ThreeSphere_\mu$.
According to \cite[Lemma 1]{Bri09}, the orbit space of (\ref{Zp-action}) restricted
to the deformed conifold $Y_\mu$ is diffeomorphic to $T^*\Lens{-1}$. Moreover, for any $\mu>0$ there is a
symplectomorphism:
\[
    \varphi_\mu: Y_\mu/\Zp \rightarrow T^*\Lens{-1}
\]
and $\varphi_\mu(\ThreeSphere_\mu/\Zp)$ is the zero section of $T^*\Lens{-1}$. Consider
an algebraic knot $\Knot\in\Lens{-1}$. Let $N_\Knot^*$ be the total space of the conormal
bundle of $\Knot$ in $\Lens{-1}$. Then $N_\Knot^*$ is a Lagrangian submanifold
of $T^*\Lens{-1}$ and the intersection of $N_\Knot^*$ with the zero section of $T^*\Lens{-1}$
is the knot $\Knot$. By the symplectomorphism $\varphi_\mu$, we see $\varphi_\mu^{-1}(N_\Knot^*)$
is a Lagrangian submanifold of $Y_\mu$ and the intersection $\varphi_\mu^{-1}(N_\Knot^*)\cap \Lens{-1}$
is a knot in $\Lens{-1}$ which is isomorphic to $\Knot\in\Lens{-1}$.
\par
In the following context, we need to make $\mu\rightarrow 0$ to do the conifold transition. Then
the zero section $\ThreeSphere_\mu/\Zp$ will be shrunk to the singular point of $Y_0$ such that
\[
    \varphi_0: (Y_0/\Zp)\backslash \{0\}\rightarrow T^*\Lens{-1}\backslash \Lens{-1}
\]
is a symplectomorphism. As a by-product, the knot $\Knot\in N_\Knot^*$ is shrunk as well and we lose
the information of the knot in the Lagrangian submanifold. In the case of $\ThreeSphere_\mu$, a solution to this problem is given in
\cite[Section 3.2]{DSV} by constructing a new Lagrangian submanifold in the
deform conifold $Y_\mu$. We briefly review the construction in the following. Let $\phi_\mu:Y_\mu\rightarrow T^*\ThreeSphere$ be the diffeomorphism $\varphi_\mu$ in the case of $p=1$.
We restrict ourselves to torus knots $\Knot=\SphereTorus$ in $\ThreeSphere$.
We consider the 1-dimensional subvariety $Z_\mu\subset Y_\mu$ defined by the complete
intersection of $Y_\mu$ with
\begin{equation}
    f(x,y)=0,\quad f(z,-w)=0  \label{Z_mu condition}
\end{equation}

The subvariety $Z_\mu$ is disconnected in general although the plane curve $f(x,y)=0$ is irreducible.
The equation (\ref{Z_mu condition}) and the defining equation of $Y_\mu$ implies
\[
    (xz)^r-(\mu-xz)^s = 0
\]
Let $\eta = xz$. where $\eta$ is a solution of the equation $u^r-(\mu-u)^s=0$. Each such solution
$\eta$ determines a connected component of $Z_\mu$ of the form
\[
    (x,y,z,w) = (t^s, t^r, \eta t^{-s}, (\eta-\mu)t^{-r})
\]
with $t\in\bC^*$.
\par
Since the coefficients of $f$ are real, $Z_\mu$ is preserved under the anti-homomorphic involution
$I$. Each connected component of the intersection $Z_\mu\cap\ThreeSphere_\mu$ is isomorphic to
the knot $\SphereTorus$ in $\ThreeSphere$. Now let
\[
    P_a = \{(u,v)\in T^*\ThreeSphere~|~|v|=a\}
\]
be the sphere bundle of radius $a$ in $T^*\ThreeSphere$. Suppose $C_\mu$ is an irreducible
component of $Z_\mu$, then for small $a>0$, the intersection $\phi_\mu(C_\mu)\cap P_a$ is nontrivial
and the projection $\pi(\phi_\mu(C_\mu)\cap P_a)$ is equal to $\phi_\mu(C_\mu\cap\ThreeSphere_\mu)
=\SphereTorus\subset\ThreeSphere$. Here $\pi:T^*\ThreeSphere\rightarrow\ThreeSphere$ is the projection. Let
$\gamma_{\mu, a}$ (simply denoted by $\gamma_a$) be the path $\phi_\mu(C_\mu)\cap P_a$:
\begin{equation*}
    \begin{aligned}
        \gamma_a(t): S^1&\rightarrow T^*\ThreeSphere
        \\
        t&\mapsto\gamma_a(t)=(g(t),h(t))
    \end{aligned}
\end{equation*}
Then the path $(g(t),0)\in\ThreeSphere$ is the knot $\SphereTorus$. The conormal bundle $N_\Knot^*$
is given by
\[
    \{(u,v)\in T^*\ThreeSphere : u=g(t), \langle v,g'(t)\rangle = 0\}
\]
where $g'(t)$ is the derivative of $g$ and $\langle,\rangle$ is the natural pairing between
tangent and cotangent vectors. We also define the Lagrangian $M_{\gamma_a}\subset T^*\ThreeSphere$
as
\begin{equation}\label{LiftKnot}
    M_{\gamma_a}=\{(u,v)\in T^*\ThreeSphere : u=g(t), \langle v-h(t), g'(t)\rangle = 0\}
\end{equation}
The Lagrangian $M_{\gamma_a}$ is obtained from $N_\Knot^*$ by translating $N_\Knot^*$
with the cotangent vector $h(t)$ fibrewisely. We denote the Lagrangian $\phi_\mu^{-1}(M_{\gamma_a})$
by $M_{\mu}$. By construction, there is a holomorphic cylinder $C_{\mu, a}$ contained in $C_\mu$,
with one boundary component in $\ThreeSphere_\mu$ and the other boundary component in $M_{\mu}$.
\par
When $\mu=0$, $Z_0$ has two special irreducible components $C^\pm$:
\begin{equation*}
    \begin{aligned}
        C^+ &= \{f(x,y)=0\}\cap\{z=w=0\}
        \\
        C^- &= \{f(z,-w)=0\}\cap\{x=y=0\}
    \end{aligned}
\end{equation*}
Both of $C^\pm$ meet the singular point of $Y_0$ and the anti-holomorphic involution $I$ exchanges
$C^\pm$. Consider the path $\gamma^+$ defined by $\phi_0(C^+\backslash \{0\})\cap P_a$. Then by the
construction in (\ref{LiftKnot}), we obtain the corresponding Lagrangian $M_{\gamma^+}$ in
$T^*\ThreeSphere$ and we denote $\phi_0^{-1}(M_{\gamma^+})$ by $M_0$. For small $\mu>0$, there
exists an irreducible component $C_\mu$ of $Z_\mu$ such that there exists a connected component
$\gamma_{\mu,a}$ of $\phi_\mu(C_\mu)\cap P_a$ which specializes to $\gamma^+$ as $\mu\rightarrow 0$.
Therefore we obtain a family of Lagrangians $M_{\mu}$ which specializes to $M_0$ as $\mu\rightarrow0$.
On the other hand, the boundary component of $C_{\mu, a}$ lying on $M_{\mu}$ is not shrunk
as $\mu\rightarrow 0$. It implies that $C_{0,a}$ is a singular holomorphic disk in $Y_0$.

\subsection{Conifold transition and Lagrangians in the orbifold resolved conifold}
\label{sec:conifold transition}
The orbifold resolved conifold is $\X=\ReducedConi$. The conifold transition is a transition
from $(\Lens{-1},\Knot)$ to the orbifold resolved conifold with Lagrangian $(\X,\cL_\Knot)$ where
we can do Gromov-Witten theory.
This process consists of the following steps:
\begin{itemize}
    \item [$\bullet$] Lift $(\Lens{-1},\Knot)$ to $(\ThreeSphere,\Tilde{\Knot})$
                    and lift $(Y_\mu/\Zp, N_\Knot^*)$ to $(Y_\mu, M)$,
                    where $M$
                    is the total space of
                    the conormal bundle of $\tilde{\Knot}$ in $\ThreeSphere$.
    \item [$\bullet$] Translate the Lagrangian $M$ fiberwisely to obtain $M_\mu$, which does not intersect with
                    the zero section of $Y_\mu\cong T^*\ThreeSphere$.
    \item [$\bullet$] Shrink $\ThreeSphere_\mu$ by setting $\mu\rightarrow 0^+$.
    \item [$\bullet$] Blow up the singularity of $Y_0$ at the origin, we get the
                    resolved conifold $\tilde{\X}$.
    \item [$\bullet$] Equip $\tilde{\X}$ a sympletic form $\omega_{\tilde{\X},\ep}$ and a Lagrangian $\tilde{L}_\ep$ obtained from $M_\mu$.
    \item [$\bullet$] Take the quotient of $\tilde{\X}$ by $\Zp$ and we get the orbifold
                    resolved conifold $\X$ together with a Lagrangian $L_\Knot$.
\end{itemize}

\begin{figure}[h]
 \centering
 \begin{tikzcd}
 (Y_\mu, M_{\mu}) \arrow[rr, "\mu\rightarrow0^+"] \arrow[dd] & &  (Y_0, M_0) & &  (\tilde{\mathcal{X}},\tilde{L}_{\ep}) \arrow[ll,"p" '] \arrow[dd] \\
                                                & &    & &                                           \\
 (Y_\mu/\mathbb{Z}_p, N_\Knot^*) \arrow[rrrr, dashed]        & &    & & (\mathcal{X},L_\Knot)
 \end{tikzcd}
 \caption{conifold transition}
\end{figure}

In the following, we restrict ourselves to the torus knots $\torus$ in
lens space. As shown in Section \ref{KnotsInLensSpace}, we can lift $\torus$
to $\SphereTorus\subset\ThreeSphere$. The knot after lifting has $p$ components.
Every connected component $\tilde{\Knot}$ is the intersection of $C_\mu\cap\ThreeSphere_\mu$
obtained from the polynomial $f(x,y)=x^r-y^s$ (i.e. $\tilde{\Knot}\cong\SphereTorus$). Then the conormal bundle $M$ of $\SphereTorus$ is
a Lagrangian of $Y_\mu$. As in equation (\ref{LiftKnot}), we can lift $M$ fiber-wisely by the
cotangent vector $h(t)$ to obtain a new Lagrangian $M_{\mu}$.
\par
We shrink $\ThreeSphere_\mu$ and blow up $\bC^4$ along the subspace $\{(x,y,z,w)~|~y=z=0\}$.
The resolution of $Y_0$ is $p:\tilde{\X}\rightarrow Y_0$, where
\[
    \tilde{\X} = \ResolvedConi,
\]
and $\tilde{\X}$ is a subspace of $\bC^4\times\bP^1$ defined by:
\[
    \{((x,y,z,w),[\lambda,\rho])~|~x\lambda = w\rho, y\lambda = z\rho\}.
\]
For $\epsilon\geq 0$, we consider the symplectic form ($\omega_{\bC^4}+\epsilon^2\omega_{\bP^1}$)
on $\bC^4\times \bP^1$. $\tilde{\X}$ can be equipped with the symplectic form $\omega_{\tilde{\X},\epsilon}$
by restriction
\[
    \omega_{\tilde{\X},\epsilon} := (\omega_{\bC^4}+\epsilon^2\omega_{\bP^1}) |_{\tilde{\X}}
\]
Let $B(\epsilon)=\{(y,z)\in\bC^2~|~|y|^2+|z|^2\leq \epsilon^2\}$ be the ball of radius $\epsilon$.
There is a radial map $\rho_\epsilon: \bC^2\backslash\{0\} \rightarrow \bC^2\backslash B(\epsilon)$,
\[
    \rho_\epsilon(y,z) = \frac{\sqrt{|y|^2+|z|^2+\epsilon^2}}{\sqrt{|y|^2+|z|^2}}(y,z)
\]
Consider the map $\varrho_\epsilon = id_{\bC^2}\times \rho_\ep: \bC^2\times
(\bC^2\backslash\{0\}) \rightarrow \bC^2\times(\bC^2\backslash B(\epsilon))$. Then $\varrho_\epsilon
(Y_0)\subset Y_0$ and it maps $Y_0\backslash\{0\}$ to $Y_0(\epsilon):= Y_0\backslash (Y_0\cap
(\bC^2\times B(\epsilon)))$. We also have that the map
\[
    \psi_\ep = \varrho_\epsilon|_{Y_0\backslash\{0\}}\circ p|_{\tilde{\X}\backslash\bP^1}: \tilde{\X}\backslash
    \bP^1\rightarrow Y_0(\epsilon)
\]
is a symplectomorphism.
\par
Recall the path $\gamma^+$ defined in Section \ref{KnotsInLensSpace}.
Now consider the path $\gamma_\epsilon^+$ obtained by applying the radial map
\[
    \gamma_\epsilon^+ = \phi_0\circ\varrho_\epsilon\circ\phi_0^{-1}\circ\gamma^+
    : S^1\rightarrow T^*\ThreeSphere
\]
and the Lagrangian
$M_{\gamma_\epsilon^+}\subset T^*\ThreeSphere$
defined by (\ref{LiftKnot}).
Then we can get a Lagrangian $\tilde{L}_\epsilon$ on the resolved conifold ($\tilde{\X}, \omega_{\tilde{\X},\ep}$) by
\[
    \tilde{L}_\epsilon = \psi^{-1}_\epsilon(\phi_0^{-1}(M_{\gamma_\epsilon^+}))
\]
Since $\tilde{\Knot}$ is the torus knot $\SphereTorus$, we use $\tilde{L}_{r,s}$
to denote the Lagrangian $\tilde{L}_\ep$ on $\tilde{\X}$. The strict transform $C\subset\tilde{\X}$ of $C^+$ is contained
in the affine patch $\{\rho\neq 0\}\subset \tilde{\X}$ with coordinates
\[
    x,y,\xi = \frac{\lambda}{\rho}
\]
and $C$ is given by
\[
    f(x,y) = 0, \quad\xi = 0
\]
The intersection of $\tilde{L}_{r,s}$ and $C$ is the boundary of a singular holomorphic disk $D_\ep$. The disk $D_\ep$ has the
parametrization
\[
    (x,y,\xi) = (t^s, t^r, 0), |t|\leq b_1,
\]
where $b_1$ is the unique positive real number satisfying
\[
    b_1^{2r} + b_1^{2s} = 4a.
\]
Here $a$ is the real number for the sphere bundle $P_a$ and it is used in the definition of
$M_{\gamma_a}$ in (\ref{LiftKnot}).
\par
Now consider the torus action $\tilde{T}_{r,s}\cong\C^*$
which acts on $\tilde{\X}$ in the following way:
\begin{equation}\label{ActionOnX}
    u\cdot((x,y,z,w),[\lambda:\rho]) = ((u^sx, u^ry, u^{-s}z, u^{-r}w), [u^{-(s+r)}\lambda:\rho])
\end{equation}
By \cite[Section 6.3]{DSV}, we have the following properties:
\begin{itemize}
    \item [$\bullet$] The torus action $(\tilde{T}_{r,s})_{\mathbb{R}}$ preserves the pair $(\tilde{\X},\tilde{L}_{r,s})$.
    \item [$\bullet$] The torus action $(\tilde{T}_{r,s})_{\mathbb{R}}$ preserves $C$.
    \item [$\bullet$] The torus action $(\tilde{T}_{r,s})_{\mathbb{R}}$
                    preserves the singular holomorphic disk $D_\ep$.
    \item [$\bullet$] $D_\ep$ is the unique torus invariant holomorphic disc on $\tilde{\cX}$ with boundary in $\tilde{L}_{r,s}$.
\end{itemize}
Furthermore, we consider the $\Zp$-action on $\tilde{\cX}$ derived from
$Y_\mu$: let $\zeta=e^{2\pi i/p}$
\begin{equation}\label{ZpOnX}
    \zeta\cdot((x,y,z,w),[\lambda:\rho]) = ((\zeta x,\zeta ^{-1}y, \zeta^{-1}z,\zeta w), [\lambda:\rho]).
\end{equation}
Now consider the following condition
\begin{equation}\label{eqn:cond}
    \left\{
    \begin{aligned}
        &r+s=pk
        \\
        &r, s, p \text{ are pairwisely coprime}.
    \end{aligned}
    \right.
\end{equation}
Under condition \eqref{eqn:cond}, take $u=\zeta= e^{2\pi i/p}$ in the torus action (\ref{ActionOnX}), the torus action reads
\[
    e^{2\pi i/p}\cdot((x,y,z,w),[\lambda:\rho]) = ((\zeta^s x,\zeta ^{-s}y, \zeta^{-s}z,\zeta^s w), [\lambda:\rho]).
\]
By comparing with the $\Zp$-action (\ref{ZpOnX}), we have that the $\Zp$-action preserves $\tilde{L}_{r,s}$ due to the fact that the torus action $(\tilde{T}_{r,s})_{\mathbb{R}}$ preserves $\tilde{L}_{r,s}$.
It means that under condition \eqref{eqn:cond}, we can take the quotient of the
pair $(\tilde{\X}, \tilde{L}_{r,s})$ by $\Zp$ and we denote the resulting pair by $(\X, L_{r,s})$. It is easy to see that the torus action (\ref{ActionOnX}) commutes with the $\Zp$-action (\ref{ZpOnX}). So the action (\ref{ActionOnX}) descents to a $(\tilde{T}_{r,s})_{\mathbb{R}}$-action on $(\X, L_{r,s})$ and the $\tilde{T}_{r,s}$-action on $\tilde{\X}$ descends to $\X$. Moreover, we define a $\Zp$-action on $\tilde{T}_{r,s}$ by multiplication by $p-$th roots of unit and let $\T= \tilde{T}_{r,s}/\Zp\cong \bC^*$. Then the $\tilde{T}_{r,s}$-action on $\X$ induces a $\T$-action on $\X$ and the $(\tilde{T}_{r,s})_{\mathbb{R}}$-action on $(\X, L_{r,s})$ induces a $(\T)_{\mathbb{R}}$-action on $(\X, L_{r,s})$.
More explicitly, the torus action $\T$ on $\X$ is given by
\begin{equation*}
    u\cdot [(x, y, w, z), [\lambda: \rho]] = [(u^{s/p} x,u ^{r/p}y, u^{-s/p}z,u^{-r/p} w), [u^{-(s+r)/p}\lambda:\rho]]
\end{equation*}
for  $u\in\T$, $[(x, y, w, z), [\lambda: \rho]]\in\X$. The disk $[D_\ep/\Zp]$ in $\X$ is parametrized as $[\{z\in\bC||z|\leq b_1\}/\Zp]$ and there is an embedding
\begin{equation*}
    \begin{aligned}
        [D_\ep/\Zp] &\hookrightarrow (\X,\Lrs)
        \\
        t &\mapsto [t^{s/p}, t^{r/p}, 0]
    \end{aligned}
\end{equation*}
where $t=z^p$ and the bracket $[x,y,\frac{\lambda}{\rho}]$ means the $\Zp$-equivalent class of the affine piece $\{\rho\neq 0\}\subset\tilde{\X}$.

In summary, we obtain a toric Calabi-Yau 3-orbifold $\X=\ReducedConi$ with a $(\T)_{\mathbb{R}}$-invariant Lagrangian $\Lrs$ bounding a singular $(\T)_\bR$-invariant holomorphic disk $[D_\ep/\Zp]$.
These features of $(\X,\Lrs)$ are indispensable for our definition and computation of open Gromov-Witten invariants. \emph{From now on, we always assume that condition \eqref{eqn:cond} holds}.
\section{Open-closed Gromov-Witten theory: A-model topological string}

\subsection{Geometry and equivariant cohomology of the orbifold resolved conifold}\label{sec:equivariantCR}
Let $N\cong \bZ^3$ be a free abelian group, and $\{b_i\}_{1\leq i\leq p+3}$ be vectors in $N$:
\[
    b_1 = (p,0,1), b_2=(0,1,1), b_3=(0,0,1), b_4=(p,-1,1)
\]
\[
    b_{i+4} = (i,0,1), \quad 1\leq i\leq p-1.
\]
Let $\Sigma$ be the fan with two 3-cones $\si_0, \si_1$,
\[
    \sigma_0 = \text{span} \{b_1,b_2,b_3\},\quad \sigma_1 = \text{span} \{b_1,b_3,b_4\}
\]

\begin{figure*}[h]
    \begin{center}
\tikzset{every picture/.style={line width=0.75pt}} 

\begin{tikzpicture}[x=0.75pt,y=0.75pt,yscale=-1,xscale=1]

\draw  [fill={rgb, 255:red, 0; green, 0; blue, 0 }  ,fill opacity=1 ] (95,120.5) .. controls (95,120.22) and (95.22,120) .. (95.5,120) .. controls (95.78,120) and (96,120.22) .. (96,120.5) .. controls (96,120.78) and (95.78,121) .. (95.5,121) .. controls (95.22,121) and (95,120.78) .. (95,120.5) -- cycle ;
\draw  [fill={rgb, 255:red, 0; green, 0; blue, 0 }  ,fill opacity=1 ] (95,75.5) .. controls (95,75.22) and (95.22,75) .. (95.5,75) .. controls (95.78,75) and (96,75.22) .. (96,75.5) .. controls (96,75.78) and (95.78,76) .. (95.5,76) .. controls (95.22,76) and (95,75.78) .. (95,75.5) -- cycle ;
\draw  [fill={rgb, 255:red, 0; green, 0; blue, 0 }  ,fill opacity=1 ] (230,120.5) .. controls (230,120.22) and (230.22,120) .. (230.5,120) .. controls (230.78,120.01) and (231,120.23) .. (231,120.51) .. controls (230.99,120.79) and (230.77,121.01) .. (230.49,121) .. controls (230.21,121) and (229.99,120.78) .. (230,120.5) -- cycle ;
\draw  [fill={rgb, 255:red, 0; green, 0; blue, 0 }  ,fill opacity=1 ] (275,165.5) .. controls (275,165.22) and (275.22,165) .. (275.5,165) .. controls (275.78,165) and (276,165.22) .. (276,165.5) .. controls (276,165.78) and (275.78,166) .. (275.5,166) .. controls (275.22,166) and (275,165.78) .. (275,165.5) -- cycle ;
\draw  [fill={rgb, 255:red, 0; green, 0; blue, 0 }  ,fill opacity=1 ] (140,120.5) .. controls (140,120.22) and (140.22,120) .. (140.5,120) .. controls (140.78,120) and (141,120.22) .. (141,120.5) .. controls (141,120.78) and (140.78,121) .. (140.5,121) .. controls (140.22,121) and (140,120.78) .. (140,120.5) -- cycle ;
\draw  [fill={rgb, 255:red, 0; green, 0; blue, 0 }  ,fill opacity=1 ] (185,120.5) .. controls (185,120.22) and (185.22,120) .. (185.5,120) .. controls (185.78,120) and (186,120.22) .. (186,120.5) .. controls (186,120.78) and (185.78,121) .. (185.5,121) .. controls (185.22,121) and (185,120.78) .. (185,120.5) -- cycle ;
\draw  [fill={rgb, 255:red, 0; green, 0; blue, 0 }  ,fill opacity=1 ] (275,120.5) .. controls (275,120.22) and (275.22,120) .. (275.5,120) .. controls (275.78,120.01) and (276,120.23) .. (276,120.51) .. controls (275.99,120.79) and (275.77,121.01) .. (275.49,121) .. controls (275.21,121) and (274.99,120.78) .. (275,120.5) -- cycle ;
\draw    (95.5,120.5) -- (275.48,120.52) ;
\draw    (95.5,75.5) -- (275.48,120.52) ;
\draw    (95.5,75.5) -- (95.5,120.5) ;
\draw    (95.5,120.5) -- (275.5,165.5) ;
\draw    (275.49,121) -- (275.51,165.98) ;

\draw (279.8,113) node [anchor=north west][inner sep=0.75pt]   [align=left] {$b_1$};
\draw (77.5,68.57) node [anchor=north west][inner sep=0.75pt]   [align=left] {$b_2$};
\draw (77,113.57) node [anchor=north west][inner sep=0.75pt]   [align=left] {$b_3$};
\draw (279.7,157.57) node [anchor=north west][inner sep=0.75pt]   [align=left] {$b_4$};
\draw (131.7,103.37) node [anchor=north west][inner sep=0.75pt]   [align=left] {$b_5$};
\draw (212.5,105.37) node [anchor=north west][inner sep=0.75pt]   [align=left] {$b_{p+3}$};
\draw (115.5,94.07) node [anchor=north west][inner sep=0.75pt]   [align=left] {$\sigma_0$};
\draw (215,129.57) node [anchor=north west][inner sep=0.75pt]   [align=left] {$\sigma_1$};
\draw (176.93,113.37) node [anchor=north west][inner sep=0.75pt]   [align=left] {$\dots$};
\end{tikzpicture}
\end{center}
\caption{the fan of orbifold resolved conifold $\ReducedConi$}
\label{fig:fan}
\end{figure*}
\noindent
The toric orbifold $X_\Sigma$ obtained from $\Sigma$ is the orbifold resolved conifold $\X=\ReducedConi$.

Let $\widetilde{\bT}\cong (\bC^*)^3$ be the embedded torus of $\cX$. Let $\bT\cong(\bC^*)^2\subset \widetilde{\bT}$ be the Calabi-Yau sub-torus which acts trivially on the canonical bundle of $\cX$. We also have a one dimensional sub-torus $\T\subset\bT$. Let $\RT:= H_{\bT}^*(\mathrm{pt})= H^*(B\bT)$, and let $\ST$ be the fractional field of $\RT$:
$$
\RT =\bC[\su_1,\su_2],\quad \ST=\bC(\su_1,\su_2).
$$
Similarly, let $H^*(B\T)=\bC[\sv]$.

Let $\cX_0=\cX_1=[\bC^3/\bZ_p]$ be two copies of affine toric Calabi-Yau 3-orbifolds, where the $\bZ_p$ action on $\bC^3$ is given by
$$
\zeta\cdot (x_1,x_2,x_3):=(x_1,\zeta x_2,\zeta^{-1}x_3)
$$
and $\zeta=e^{\frac{2\pi i}{p}}$ is the generator of $\bZ_p$. Consider the two $\T$ fixed points $\fp_0=[0:1],\fp_1=[1:0]$ on $\cX$. We have two embedding $\iota_\si:\cX_\si\to \cX,\si=0,1$, such that $\iota_\si$ maps the origin of $\cX_\si$ to $\fp_\si$. The actions of $\widetilde{\bT},\bT$, and $\T$ on $\cX$ induces corresponding actions on $\cX_0,\cX_1$.

Let $I\cX_0$ be the inertia stack of $\cX_0$. Then we have
$$
I\cX_0=\bigsqcup_{h\in\bZ_p} \cX_h, \quad \textrm{with}\quad \cX_h=[(\bC^3)^h/\bZ_p].
$$
As a vector space, we have the following decomposition of the Chen-Ruan orbifold cohomology of $\cX_0$:
$$
H^*_\CR(\cX_0;\bC) = \bigoplus_{h\in \bZ_p} H^*(\cX_h;\bC)[2\age(h)] =\bigoplus_{h\in \bZ_p}\bC \one_h,
$$
where $\one_h$ is unit of the ring $H^*(\cX_h;\bC)\cong \bC$ with $\deg(\one_h)=2\age(h)$. Here we have
$$
\age(h)=\left\{\begin{array}{ll}1, &\textrm{if $h=\zeta^k$ with $1\leq k\leq p-1$},\\
0, &\textrm{if $h=1$}.\end{array} \right.
$$
Let $\sw_{1,0},\sw_{2,0},\sw_{3,0}$ be the weights of the $\bT$ action along the 3 coordinate axes of $\cX_0$.
For $h=\xi^k$, $0\leq k\leq p-1$, we define
$$c_1(\xi^k)=\frac{k}{p}, \quad c_2(\xi^k)=0,\quad c_3(\xi^k)=1-\frac{k}{p}-\delta_{0,k} .$$
Then it is easy to see that the $\bT$-equivariant Poincar\'{e} pairing is given by
$$
\langle \one_h,\one_{h'}\rangle_{\cX_0} = \frac{\delta_{hh',1}}{\displaystyle{p \prod_{i=1}^3 \sw_{i,0}^{\delta_{c_i(h),0}} } },
$$
and the $\bT$-equivariant
orbifold cup product is given by
$$
\one_h \star \one_{h'} =\Big(\prod_{i=1}^3 \sw_{i,0}^{c_i(h)+c_i(h')-c_i(hh')}\Big) \one_{hh'}.
$$
Let
$$
\bar{\one}_h:=\frac{\one_h}{\prod_{i=1}^3 \sw_{i,0}^{c_i(h)}}.
$$
$\bST$ is the minimal extension of $\ST$ which contains $\{\sw_{i,0}^{\delta_{c_i(h),0}}|i=1,2,3\}$.
We view $\bar{\one}_h$ as an element in $H^*_{\CR,\bT}(\cX_0;\bC)\otimes_{\RT}\bST$. For any $\gamma\in \bZ_{p}^*$, define
$$
\bar{\phi}_\gamma:=\frac{1}{p}\sum_{h\in \bZ_{p}}\chi_\gamma(h^{-1})\bar{\one}_h.
$$
Then under the basis $\{\bar{\phi}_\gamma\}_{\gamma\in \bZ_{p}^*}$ the $\bT$-equivariant Poincar\'{e} pairing and the $\bT$-equivariant
orbifold cup product is given by
$$
\langle \bar{\phi}_\gamma,\bar{\phi}_{\gamma'}\rangle_{\cX_0} = \frac{\delta_{\gamma \gamma'}}{p^2 \prod_{i=1}^3 \sw_{i,0} },\quad
\bar{\phi}_\gamma\star \bar{\phi}_{\gamma'} = \delta_{\gamma\gamma'}\bar{\phi}_{\gamma}.
$$
In terms of Frobenius structures, $\{\bar{\phi}_\gamma\}_{\gamma\in \bZ_{p}^*}$ is a canonical basis of the semisimple Frobenius algebra
$$
\big(H^*_{\CR,\bT}(\cX_0)\otimes_\RT \bST, \star, \langle \  ,\  \rangle_{\cX_0}\big)
$$
over the field $\bST$.

Applying the same construction to $\cX_1$, we obtain a semisimple Frobenius algebra $\big(H^*_{\CR,\bT}(\cX_1)\otimes_\RT \bST, \star, \langle \  ,\  \rangle_{\cX_1}\big)$.

Consider the two $\T$ fixed points $\fp_0=[0:1],\fp_1=[1:0]$ on $\cX$. We have two embedding $\iota_\si:\cX_\si\to \cX,\si=0,1$, such that $\iota_\si$ maps the origin of $\cX_\si$ to $\fp_\si$. Then we have an isomorphism of Frobenius algebras
\begin{equation}\label{eqn:direct-sum}
\bigoplus_{\si=0,1} \iota_\si^*: H^*_{\CR,\bT}(\cX;\bC)\otimes_\RT\bST
\stackrel{\cong}{\longrightarrow}  \bigoplus_{\si=0,1} H^*_{\CR,\bT}(\cX_\si;\bC)\otimes_{\RT} \bST,
\end{equation}
where the left hand side is also equipped with the $\bT$-equivariant Poincar\'{e} pairing and the $\bT$-equivariant
orbifold cup product. Therefore there exists a unique $\phi_{\si,\gamma}\in H^*_{\CR,\bT}(\cX)\otimes_{\RT} \bST$
such that $\phi_{\si,\gamma}|_{\cX_\sigma}= \bar{\phi}_\gamma$
and $\phi_{\si,\gamma}|_{\fp_{\si'}}=0$ for $\si'\neq\si$. Let $I_\cX:=\{(\si,\gamma): \si=0,1,\gamma\in \bZ_{p}^*\}$. Then
$$
\{\phi_{\si,\gamma}: (\si,\gamma)\in I_\cX\}
$$
is a canonical basis of the semisimple $\bST$-algebra  $H^*_{\CR,\bT}(\cX;\bC)\otimes_{\RT}\bST$:
$$
\phi_{\si,\gamma}\star \phi_{\si',\gamma'}
=\delta_{\si,\si'}\delta_{\gamma,\gamma'} \phi_{\si,\gamma}.
$$
The $\bT$-equivariant Poincar\'{e} pairing of $\cX$ is given by
$$
( \phi_{\si,\gamma},\phi_{\si',\gamma'})_{\cX,\bT}
=\frac{\delta_{\si,\si'}\delta_{\gamma,\gamma'}}{\Delta^{\si,\gamma}},\quad
\Delta^{\si,\gamma}=p^2 \prod_{i=1}^3\sw_{i,\si}.
$$
For convenience, we use $\bsi$ to denote the pair $(\si,\gamma)$. Let
$$
\hat{\phi}_{\bsi}=\sqrt{\Delta^{\bsi}}\phi_{\bsi},\quad \bsi\in I_\cX.
$$
Then $\{\hat{\phi}_{\bsi}\}_{\bsi\in I_\cX}$ is a normalized canonical basis of $H^*_{\CR,\bT}(\cX;\bC)\otimes_{\RT}\bST$, namely
$$
\hat{\phi}_{\bsi}\star_{\cX}\hat{\phi}_{\bsi'} = \delta_{\bsi,\bsi'} \sqrt{\Delta^{\bsi}}\hat{\phi}_{\bsi},\quad
(\hat{\phi}_{\bsi}, \hat{\phi}_{\bsi'})_{\cX,\bT}= \delta_{\bsi,\bsi'}.
$$

\subsection{Generating functions of equivariant Gromov-Witten invariants}
Let $X$ be the coarse moduli space of $\cX$. Given nonnegative integers $g$, $n$ and an effective curve class
$d\in H_2(X;\bZ)$, let $\Mbar_{g,n}(\cX,d)$ be the
moduli stack of genus $g$, $n$-pointed, degree $d$ twisted
stable maps to $\cX$. Let $\ev_i:\Mbar_{g,n}(\cX,d)\to \cI\cX$ be the evaluation map
at the $i$-th marked point. The $\bT$-action on $\cX$ induces
$\bT$-actions on the moduli space $\Mbar_{g,n}(\cX,d)$ and on the inertia stack $\cI\cX$, and
the evaluation map $\ev_i$ is $\bT$-equivariant. Similarly, we can define the moduli space $\Mbar_{g,n}(X,d)$.

For $i=1,\cdots,n$, let $\bL_i$ be the $i$-th tautological line bundle over $\Mbar_{g,n}(X,d)$ formed
by the cotangent line at the $i$-th marked point. Define the $i$-th descendent class $\psi_i$ as
$$
\psi_i=c_1(\bL_i)\in H^2(\Mbar_{g,n}(X,d);\bQ).
$$
Consider the map $p:\Mbar_{g,n}(\cX,d)\to \Mbar_{g,n}(X,d)$ induced by the forgetful map $\cX\to X$. For $i=1,\cdots,n$, let
$$
\hat{\psi}_i := p^*\psi_i  \in H^2(\Mbar_{g,n}(\cX,d);\bQ).
$$
Given $\gamma_1,\ldots, \gamma_n\in H_{\bT}^*(\cX,\bC)$ and $a_1,\ldots,a_n\in \bZ_{\geq 0}$, we define genus $g$, degree $d$, $\bT$-equivariant descendant Gromov-Witten invariants of $\cX$:
$$
\langle \tau_{a_1}(\gamma_1)\ldots\tau_{a_n}(\gamma_n)\rangle_{g,n,d}^{\cX,\bT}:=
\int_{[\Mbar_{g,n}(\cX,d)^{\bT}]^\vir} \frac{\prod_{j=1}^n \hat{\psi}_j^{a_j}\ev_j^*(\gamma_j)\mid_{\Mbar_{g,n}(\cX,d)^{\bT}}}{e_{\bT}(N^\vir)}
$$
where $\Mbar_{g,n}(\cX,d)^{\bT}$ is the $\bT$-fixed locus, and $e_{\bT}(N^\vir)$ is the $\bT$-equivariant Euler class of the virtual normal bundle. We also define genus $g$, degree $d$ primary Gromov-Witten invariants of $\cX$ as:
$$
\langle \gamma_1,\ldots, \gamma_n\rangle_{g,n,d}^{\cX,\bT}:=
\langle \tau_0(\gamma_1) \cdots \tau_0(\gamma_n)\rangle_{g,n,d}^{\cX,\bT}.
$$
By restricting to $\T$, we obtain $\T$-equivariant descendant Gromov-Witten invariants $\langle \gamma_1,\ldots, \gamma_n\rangle_{g,n,d}^{\cX,\T}$ and $\T$-equivariant primary Gromov-Witten invariants $\langle \tau_{a_1}(\gamma_1)\ldots\tau_{a_n}(\gamma_n)\rangle_{g,n,d}^{\cX,\T}$.

Let $\NE(\cX)\subset H_2(\cX;\bR)=H_2(X;\bR)$ be the Mori cone
generated by effective curve classes in $X$.
Let $E(\cX)$ denote the semigroup $\NE(\cX)\cap H_2(X;\bZ)$. In our case, $E(\cX)$ is identified with the set of nonnegative integers. Define the Novikov ring
\[
\nov:=\widehat{\bC[E(\cX)]}=\{\sum_{d\in E(\cX)} c_d \fQ^d: c_d\in \bC\}.
\]
Given $a_1,\ldots, a_n\in \bZ_{\geq 0}$,
$\gamma_1,\ldots,\gamma_n\in H^*_{\CR,\bT}(\cX)\otimes_\RT \bST$, define
the following generating function:
$$
\ll  \gamma_1\hat{\psi}^{a_1},  \cdots, \gamma_n\hat{\psi}^{a_n} \gg ^{\cX,\bT}_{g,n}
:=\sum_{m=0}^\infty \sum_{d\in E(\cX)}\frac{\fQ^d}{m!}\langle
\gamma_1\hat{\psi}^{a_1}, \cdots, \gamma_n\hat{\psi}^{a_n}, t^m \rangle^{\cX,\bT}_{g,n+m,d}
$$
where $t\in H^*_{\CR,\bT}(\cX)\otimes_{R_\bT}\bST$.  Let $t=\sum_{\bsi}\hat{t}^\bsi\hat{\phi}_{\bsi}$. For $i=1,\ldots,n$, introduce formal variables
$$
\bu_i =\bu_i(z)= \sum_{a\geq 0}(u_i)_a z^a
$$
where $(u_i)_a \in H^*_{\CR,\bT}(\cX;\bC)\otimes_\RT\bST$.
Define
\begin{align*}
\ll \bu_1,\ldots, \bu_n  \gg_{g,n}^{\cX,\bT} &=
\ll \bu_1(\hat{\psi}),\ldots, \bu_n(\hat{\psi})  \gg_{g,n}^{\cX,\bT}\\
&=\sum_{a_1,\ldots,a_n\geq 0}
\ll (u_1)_{a_1}\hat{\psi}^{a_1}, \cdots, (u_n)_{a_n}\hat{\psi}^{a_n} \gg_{g,n}^{\cX,\bT}.
\end{align*}

\subsection{Equivariant quantum cohomology and Frobenius structures}\label{sec:QH}
The equivariant quantum cohomology of $\cX$ is defined by its genus zero primary Gromov-Witten invariants. More concretely, for any  $a,b,c\in H_{\CR,\bT}^*(\cX;\bST)$, define the quantum product $\star_t$ by
\begin{equation}
(a\star_t b,c)_{\cX,\bT}:=\ll a,b,c\gg_{0,3}^{\cX,\bT}.
\end{equation}
Let
$$
\novT:= \bST\otimes_{\bC}\nov =\bST [\![ E(\cX)]\!].
$$
Then $H:=H^*_{\CR,\bT}(\cX;\novT)$ is a free $\novT$-module of rank $2p$. Given a point $t\in H$, we write it as $t=\sum_{\bsi\in I_{\cX}}\hat{t}^{\bsi} \hat{\phi}_{\bsi}$, where $\{\hat{t}^{\bsi}\}$ are viewed as the coordinates of $t$. We also write $t=\btau'+\btau''\in H^{*}_{\CR,\bT}(\X)\otimes_{R_\bT} \bST$, where $\btau'\in H^2_\bT(\X)\otimes_{R_\bT} \bST$, and $\btau''$ is a linear combination of elements in $H^{\neq 2}_{\CR,\bT}(\X)\otimes_{R_\bT} \bST$
and degree 2 twisted sectors. Choose arbitrary coordinate systems $\tau'$ and $\tau''$ for $\btau'$ and $\btau''$ respectively. By divisor equation, we have
\begin{equation}
(a\star_t b,c)_{\cX,\bT}=\ll a,b,c\gg_{0,3}^{\cX,\bT}\in \bST[\![\tfQ, \tau'' ]\!],
\end{equation}
where $\tfQ^d:=\fQ^d\exp(\langle d,\btau'\rangle)$.
Define the formal scheme
$$
\hat{H} :=\mathrm{Spec}(\novT[\![ \hat{t}^{\bsi}:\bsi\in I_{\cX} ]\!]).
$$
Denote the structure sheaf and the tangent sheaf of $\hat{H}$ by $\cO_{\hat{H}}$ and $\cT_{\hat{H}}$ respectively.
Then $\cT_{\hat{H}}$ is a sheaf of free $\cO_{\hat{H}}$-modules of rank $2p$.
For any open set $U$ in $\hat{H}$, we have
$$
\cT_{\hat{H}}(U)  \cong \bigoplus_{\bsi\in I_{\cX}}\cO_{\hat{H}}(U) \frac{\partial}{\partial \hat{t}^{\bsi}}.
$$
The quantum product together with the $\bT$-equivariant Poincar\'{e} pairing define the structure of a formal
Frobenius manifold on $\hat{H}$:
$$
\frac{\partial}{\partial \hat{t}^{\bsi}} \star_t \frac{\partial}{\partial \hat{t}^{\bsi'}}
=\sum_{\bsi''\in I_{\cX}} \ll \hat{\phi}_{\bsi},\hat{\phi}_{\bsi'},\hat{\phi}_{\bsi''}\gg_{0,3}^{\cX,\bT}
\frac{\partial}{\partial \hat{t}^{\bsi''}}
\in \Gamma(\hat{H}, \cT_{\hat{H}}).
$$
$$
( \frac{\partial}{\partial \hat{t}^{\bsi}},\frac{\partial}{\partial \hat{t}^{\bsi'}})_{\cX,\bT} =\delta_{\bsi,\bsi'}.
$$

The set of global sections $\Gamma(\hat{H},\cT\hat{H})$ is a free $\cO_{\hat{H}}(\hat{H})$-module of rank $2p$:
$$
\Gamma(\hat{H},\cT\hat{H})=
\bigoplus_{\bsi\in I_{\cX}}\cO_{\hat{H}}(\hat{H})\frac{\partial}{\partial \hat{t}^{\alpha}}.
$$
Under the quantum product $\star_t$, the triple $(\Gamma(\hat{H},\cT\hat{H}),\star_t,(, )_{\cX,\bT})$ is a Frobenius algebra over the ring $\cO_{\hat{H}}(\hat{H})=\novT[\![ \hat{t}^{\bsi}:\bsi\in I_{\cX} ]\!]$. The triple $(\Gamma(\hat{H},\cT\hat{H}),\star_t,(, )_{\cX,\bT})$ is called the \emph{big quantum cohomology} of $\cX$ and is denoted by $QH^*_{\bT}(\cX)$.

The semi-simplicity of the classical cohomology $H^*_{\bT}(\cX;\bC)\otimes_\RT \bST$ implies the semi-simplicity of the quantum cohomology $QH^*_{\bT}(\cX)$. In fact, there exists a canonical basis $\{\phi_{\bsi}(t)\}_{\bsi\in I_{\cX}}$ of $QH^*_{\bT}(\cX)$ characterized by the property that
\begin{equation}
\phi_{\bsi}(t)\to  \phi_{\bsi},\quad\mathrm{when }\quad t,\tfQ\to 0,\quad \bsi\in I_{\cX}.
\end{equation}
We define $\{\phi^{\bsi}(t)\}_{\bsi\in I_{\cX}}$ to be the dual basis to $\{\phi_{\bsi}(t)\}_{\bsi\in I_{\cX}}$ with respect to the metric $(,)_{\cX,\bT}$. See \cite{LP} for more general discussions on the canonical basis.

\subsection{Canonical coordinates and the transition matrix}
The canonical coordinates $\{ u^{\bsi}=u^{\bsi}(t)|\bsi\in I_{\cX}\}$ on the formal Frobenius
manifold $\hat{H}$ are characterized by
\begin{equation}\label{eqn:partial-u}
\frac{\partial}{\partial u^{\bsi}} = \phi_{\bsi}(t).
\end{equation}
up to additive constants in $\novT$. We choose canonical coordinates
such that they vanish when $\tfQ=0,\hat{t}^\bsi=0,\bsi\in I_{\cX}$. Then $u^{\bsi}-\sqrt{\Delta^{\bsi}}\hat{t}^{\bsi}$ vanishes when $\tfQ=0,\hat{t}^\bsi=0,\bsi\in I_{\cX}$.

Define $\Delta^{\bsi}(t)$ by the following equation:
$$
(\phi_{\bsi}(t), \phi_{\bsi'}(t))_{\cX,\bT} =\frac{\delta_{\bsi,\bsi'}}{\Delta^{\bsi}(t)}.
$$
Then $\Delta^{\bsi}(t) \to \Delta^{\bsi}$  in the large radius limit $\tfQ,\hat{t}^\bsi\to 0$. The normalized canonical basis of $(\hat{H},\star_t)$ is given by
$$
\{ \hat{\phi}_{\bsi}(t):= \sqrt{\Delta^{\bsi}(t)}\phi_{\bsi}(t)| \bsi\in I_{\cX}\}.
$$
They satisfy
$$
\hat{\phi}_{\bsi}(t)\star_t \hat{\phi}_{\bsi'}(t) =\delta_{\bsi, \bsi'}\sqrt{\Delta^{\bsi}(t)}\hat{\phi}_{\bsi}(t),\quad
(\hat{\phi}_{\bsi}(t), \hat{\phi}_{\bsi'}(t))_{\cX,\bT}=\delta_{\bsi,\bsi'}.
$$
We call $\{\hat{\phi}_{\bsi}(t)|\bsi\in I_{\cX}\}$ the {\em quantum} normalized canonical basis
to distinguish it from the  {\em classical} normalized canonical basis $\{ \hat{\phi}_{\bsi}|\bsi\in I_{\cX}\}$.
The quantum canonical basis tends to the classical canonical
basis in the large radius limit: $\hat{\phi}_{\bsi}(t)\to \hat{\phi}_{\bsi}$ as $\tfQ,\hat{t}^\bsi\to 0$.

Let $\Psi=(\Psi_{\bsi'}^{\  \bsi})$ be the transition matrix between the classical and quantum
normalized canonical bases:
\begin{equation}\label{eqn:Psi-phi}
\hat{\phi}_{\bsi'}=\sum_{\bsi\in I_{\cX}} \Psi_{\bsi'}^{\ \bsi} \hat{\phi}_\bsi(t).
\end{equation}
Then $\Psi$ is a $2p\times 2p$ matrix, and $\Psi\to \one$ (the identity matrix)
in the large radius limit $\tfQ,\hat{t}^\bsi\to 0$. Both
the classical and quantum normalized canonical bases are orthonormal with respect
to the $\bT$-equivariant Poincar\'{e} pairing $(\ ,\ )_{\cX,\bT}$, so $\Psi^T\Psi= \Psi \Psi^T= \one$,
where $\Psi^T$ is the transpose of $\Psi$, or equivalently
$$
\sum_{\bsi''\in I_{\cX}} \Psi_{\bsi''}^{\ \bsi} \Psi_{\bsi''}^{\ \bsi'} =\delta_{\bsi,\bsi'}
$$
Equation \eqref{eqn:Psi-phi} can be rewritten as
$$
\frac{\partial}{\partial \that^{\bsi'}} =\sum_{\bsi\in I_{\cX}} \Psi_{\bsi'}^{\ \bsi}
\sqrt{\Delta^{\bsi}(t)} \frac{\partial}{\partial u^{\bsi}}
$$
which is equivalent to
\begin{equation}
\label{eqn:Psi-matrix}
\frac{du^{\bsi}}{\sqrt{\Delta^{\bsi}(t)}} =
\sum_{\bsi'\in I_{\cX}}
d\that^{\bsi'} \Psi_{\bsi'}^{\ \bsi}.
\end{equation}

\subsection{Equivariant quantum differential equation and fundamental solution}
We consider the Dubrovin connection $\nabla^z$, which is a family
of connections parametrized by $z\in \bC\cup \{\infty\}$, on the tangent bundle
$T_{\hat{H}}$ of the formal Frobenius manifold $\hat{H}$:
$$
\nabla^z_{\bsi}=\frac{\partial}{\partial \hat{t}^{\bsi}} -\frac{1}{z} \hat{\phi}_{\bsi}\star_t.
$$
The commutativity (resp. associativity)
of $\star_t$ implies that $\nabla^z$ is a torsion
free (resp. flat) connection on $T_{\hat{H}}$ for all $z$. The equation
\begin{equation}\label{eqn:qde}
\nabla^z \mu=0
\end{equation}
for a section $\mu\in \Gamma(\hat{H},\cT_{\hat{H}})$ is called the {\em $\bT$-equivariant
big quantum differential equation} ($\bT$-equivariant big QDE). Let
$$
\cT_{\hat{H}}^{f,z}\subset \cT_{\hat{H}}
$$
be the subsheaf of flat sections with respect to the connection $\nabla^z$.
For each $z$, $\cT_{\hat{H}}^{f,z}$ is a sheaf of
$\novT$-modules of rank $2p$.

A section $L\in \End(T_{\hat{H}})=\Gamma(\hat{H},\cT_{\hat{H}}^*\otimes\cT_{\hat{H}})$
defines an $\cO_{\hat{H}}(\hat{H})$-linear map
$$
L: \Gamma(\hat{H},\cT_{\hat{H}})= \bigoplus_{\bsi\in I_{\cX}} \cO_{\hat{H}}(\hat{H})
\frac{\partial}{\partial \hat{t}^{\bsi}}
\to \Gamma(\hat{H},\cT_{\hat{H}})
$$
from the free $\cO_{\hat{H}}(\hat{H})$-module $\Gamma(\hat{H},\cT_{\hat{H}})$ to itself.
Let $L(z)\in \End(T_{\hat H})$ be a family of endomorphisms of the tangent bundle $T_{\hat{H}}$
parametrized by $z$. The section $L(z)$ is called a {\em fundamental solution} to the $\bT$-equivariant QDE if
the $\cO_{\hat{H}}(\hat{H})$-linear map
$$
L(z): \Gamma(\hat{H},\cT_{\hat{H}}) \to \Gamma(\hat{H},\cT_{\hat{H}})
$$
restricts to a $\novT$-linear isomorphism
$$
L(z): \Gamma(\hat{H},\cT_H^{f,\infty})=\bigoplus_{\bsi\in I_{\cX}} \novT \frac{\partial}{\partial \hat{t}^{\bsi}}
\to \Gamma(\hat{H},\cT_H^{f,z}).
$$
between rank $2p$ free $\novT$-modules.

\subsection{The $\cS$-operator}\label{sec:A-S}
We define the $\cS$-operator as follows.
For any cohomology classes $a,b\in H_{\CR,\bT}^*(\cX;\bST)$,
$$
(a,\cS(b))_{\cX,\bT}:=(a,b)_{\cX,\bT}
+\ll a,\frac{b}{z-\hat{\psi}}\gg^{\cX,\bT}_{0,2}
$$
where
$$
\frac{b}{z-\hat{\psi}}=\sum_{i=0}^\infty b\hat{\psi}^i z^{-i-1}.
$$
The $\cS$-operator can be viewed as an element in $\End(T_{\hat{H}})$ and is a fundamental solution to the $\bT$-equivariant
big QDE \eqref{eqn:qde}.  The proof for $\cS$ being a fundamental solution can be found in  \cite[Section 10.2]{CK}
for the smooth case and in \cite{Ir09} for the orbifold case which is a direct generalization of the smooth case.

We consider several different (flat) bases for $H_{\CR,\bT}^*(\cX;\bST)$:
\begin{enumerate}
\item The classical canonical basis $\{ \phi_{\bsi}:\bsi\in I_\cX \}$ defined in Section \ref{sec:equivariantCR}.
\item The basis dual to the classical canonical basis with respect to the $\bT$-equivariant Poincare pairing:
$\{ \phi^{\bsi} =\Delta^{\bsi} \phi_{\bsi}: \bsi \in I_\cX \} $.
\item The classical normalized canonical basis
$\{ \hat{\phi}_{\bsi}=\sqrt{\Delta^{\bsi}}\phi_{\bsi} :\bsi\in I_\cX\}$ which is self-dual: $\{ \hat{\phi}^{\bsi}=\hat{\phi}_{\bsi}: \bsi \in I_\cX \}$.
\end{enumerate}

For $\bsi, \bsi'\in I_\cX$, define
$$
S^{\bsi'}_{\spa \bsi}(z) := (\phi^{\bsi'}, \cS(\phi_{\bsi})).
$$
Then $(S^{\bsi'}_{\spa \bsi}(z))$ is the matrix  of the $\cS$-operator with respect to the canonical basis
$\{\phi_{\bsi}:\bsi\in I_\cX \}$:
\begin{equation}\label{eqn:S}
\cS(\phi_{\bsi}) =\sum_{\bsi'\in I_\cX}
\phi_{\bsi'} S^{\bsi'}_{\spa \bsi}(z).
\end{equation}

For $\bsi,\bsi'\in I_\cX$, define
$$
S_{\bsi'}^{\spa \widehat{\bsi} }(z) := (\phi_{\bsi'}, \cS(\hat{\phi}^{\bsi})).
$$
Then $(S_{\bsi'}^{\spa  \widehat{\bsi}})$ is the matrix of the $\cS$-operator
with respect to the basis $\{\hat{\phi}^{\bsi}:\bsi\in I_\cX\}$ and
$\{\phi^{\bsi}: \bsi\in I_{\cX}\}$:
\begin{equation}\label{eqn:barS}
\cS(\hat{\phi}^{\bsi})=\sum_{\bsi'\in I_{\cX}} \phi^{\bsi'}
 S_{\bsi'}^{\spa \widehat{\bsi}}(z).
\end{equation}

Introduce
\begin{align*}
S_z(a,b)&=(a,\cS(b))_{\cX,\bT},\\
V_{z_1,z_2}(a,b)&=\frac{(a,b)_{\cX,\bT}}{z_1+z_2}+\ll \frac{a}{z_1-\hat{\psi}_1},
                  \frac{b}{z_2-\hat{\psi}_2}\gg^{\cX,\bT}_{0,2}.
\end{align*}
The following identity is known (see e.g. \cite{GT}, \cite{Gi97}):
\begin{equation}
\label{eqn:two-in-one}
V_{z_1,z_2}(a,b)=\frac{1}{z_1+z_2}\sum_i S_{z_1}(T_i,a)S_{z_2}(T^i,b),
\end{equation}
where $T_i$ is any basis of $H^*_{\CR,\bT}(\cX;\bST)$ and $T^i$ is its dual basis.
In particular,
$$
V_{z_1,z_2}(a,b)=\frac{1}{z_1+z_2}\sum_{\bsi\in I_\cX} S_{z_1}(\hat{\phi}_{\bsi},a)S_{z_2}(\hat{\phi}_{\bsi},b).
$$

\subsection{The A-model $R$-matrix}
Let $U$ denote the diagonal matrix whose diagonal entries are the canonical coordinates.
The results in \cite{Gi97, Gi96b} and \cite{Zo} imply the following statement.
\begin{theorem}\label{R-matrix}
There exists a unique matrix power series $R(z)= \one + R_1z+R_2 z^2+\cdots$
satisfying the following properties.
\begin{enumerate}
\item The entries of $R_d$ lie in $\bSTQ$.
\item $\tS=\Psi R(z) e^{U/z}$  is a fundamental solution to the $\bT$-equivariant
big QDE \eqref{eqn:qde}.
\item $R$ satisfies the unitary condition $R^T(-z)R(z)=\one$.
\item
\begin{equation}\label{eqn:R-at-zero}
\lim_{\tfQ,\tau''\to 0} R_{\rho,\delta}^{\spa\si,\gamma}(z)
= \frac{\delta_{\rho,\si}}{p}\sum_{h\in \bZ_p}\chi_\delta(h) \chi_\gamma(h^{-1})
\prod_{i=1}^3 \exp\Big( \sum_{m=1}^\infty \frac{(-1)^m}{m(m+1)}B_{m+1}(c_i(h))
(\frac{z}{\sw_i(\si)})^m \Big),
\end{equation}
where $B_m(x)$ is the $m$-th Bernoulli polynomial, defined by the following identity:
$$
\frac{te^{tx}}{e^t-1}=\sum_{m\geq 0}\frac{B_m(x)t^m}{m!}.
$$
\end{enumerate}
\end{theorem}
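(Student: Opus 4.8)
The plan is to deduce the existence and uniqueness of $R(z)$ from the general theory of semisimple Frobenius manifolds (Givental \cite{Gi97, Gi96b}) in the equivariant orbifold formulation of \cite{Zo}, and then to carry out the one genuinely geometric computation, namely the large radius limit \eqref{eqn:R-at-zero}. First I would rewrite the $\bT$-equivariant big QDE \eqref{eqn:qde} in the quantum normalized canonical frame $\{\hat{\phi}_{\bsi}(t)\}$ and the canonical coordinates $\{u^{\bsi}\}$. Because $\phi_{\bsi}(t)\star_t\phi_{\bsi'}(t)=\delta_{\bsi\bsi'}\phi_{\bsi}(t)$, multiplication by $\hat{\phi}_{\bsi}\star_t$ is, up to the scalar $\sqrt{\Delta^{\bsi}(t)}$, the projection onto the $\bsi$-summand, so in this frame the connection takes the normal form $d-\tfrac1z\,dU+\Theta$, where $U=\mathrm{diag}(u^{\bsi})$ and $\Theta=\Psi^{-1}d\Psi$ encodes the non-flatness of the rotating frame through \eqref{eqn:Psi-matrix}. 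Substituting the ansatz $\tS=\Psi R(z)e^{U/z}$ and collecting powers of $z$ turns $\nabla^z\tS=0$ into a recursion of the schematic form $[dU,R_{d+1}]=dR_d+\Theta R_d$. Here semisimplicity is essential: the entries of $dU$ are the pairwise distinct $1$-forms $du^{\bsi}$, so $[dU,\,\cdot\,]$ is invertible on off-diagonal matrices and determines the off-diagonal part of $R_{d+1}$ algebraically, while the diagonal part is obtained by integration, the integrability being guaranteed by the flatness (zero curvature) of $\nabla^z$. The initial condition $R(0)=\one$ together with the unitary constraint removes the remaining integration constants, giving uniqueness; this is exactly the content of \cite{Gi97, Gi96b, Zo}.

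Granting this, property (1) is immediate: every ingredient entering the recursion --- the structure constants of $\star_t$, the matrix $\Psi$, and $\Delta^{\bsi}(t)$ --- lies in $\bSTQ$, hence so do the $R_d$. Property (3), the unitarity $R^T(-z)R(z)=\one$, I would obtain from the compatibility of the Dubrovin connection with the $\bT$-equivariant Poincar\'e pairing: the pairing is $\nabla^z$-flat and conjugates $\nabla^z$ to $\nabla^{-z}$ under transposition, which forces the normalized fundamental solution to be orthogonal in the stated sense; this is standard and is recorded in the references.

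The main obstacle, and the step I would treat in detail, is the explicit evaluation \eqref{eqn:R-at-zero}. In the large radius limit $\tfQ,\tau''\to0$ the quantum product degenerates to the classical $\bT$-equivariant Chen--Ruan product, and by the Frobenius algebra splitting \eqref{eqn:direct-sum} the whole structure factors as a direct sum over the two fixed points $\si=0,1$, each contributing the algebra of $\cX_\si=[\bC^3/\bZ_p]$. Hence $R$ becomes block diagonal, which produces the factor $\delta_{\rho,\si}$. Within the block at $\fp_\si$ the problem reduces to computing the $R$-matrix of the orbifold point $[\bC^3/\bZ_p]$, which I would extract from the orbifold quantum Riemann--Roch, i.e. the asymptotic expansion of the equivariant $\Gamma$-class along the three coordinate directions: each axis with $\bT$-weight $\sw_i(\si)$ and age datum $c_i(h)$ contributes the factor $\exp\!\big(\sum_{m\ge1}\frac{(-1)^m}{m(m+1)}B_{m+1}(c_i(h))(z/\sw_i(\si))^m\big)$ coming from the Stirling-type asymptotics of $\log\Gamma$, with $B_{m+1}(0)=B_{m+1}$ recovering the smooth $\bC^3$ vertex.

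Finally, passing from the twisted-sector basis $\{\one_h\}$, in which this operator is diagonal in $h$, to the canonical basis $\{\bar{\one}_h\mapsto\bar\phi_\gamma\}$ via $\bar\phi_\gamma=\tfrac1p\sum_{h}\chi_\gamma(h^{-1})\bar{\one}_h$ conjugates by the discrete Fourier matrix and yields precisely the character sum $\tfrac1p\sum_{h}\chi_\delta(h)\chi_\gamma(h^{-1})$ in \eqref{eqn:R-at-zero}. I expect the bookkeeping of the age shifts $c_i(h)$ and the matching of the $\bZ_p$-character conventions with those fixed in Section \ref{sec:equivariantCR} to be the only delicate points; the structural statements (1)--(3) should follow formally from the cited theory once the normal form of the QDE is in place.
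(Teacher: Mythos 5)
Your overall route is the one the paper itself takes: the paper gives no independent proof but cites \cite{Gi97, Gi96b} and \cite{Zo}, i.e.\ exactly the package you reconstruct --- Givental's normal form of the QDE in the normalized canonical frame and the resulting recursion for existence, flatness of the pairing for unitarity, and, for the limit \eqref{eqn:R-at-zero}, the splitting of the Frobenius algebra over the two fixed points, the Gamma-class/Stirling asymptotics producing the Bernoulli polynomials $B_{m+1}(c_i(h))$, and the discrete Fourier transform giving the character sum. All of that part of your write-up is consistent with the cited results.

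There is, however, a genuine gap in your uniqueness step. You claim that ``the initial condition $R(0)=\one$ together with the unitary constraint removes the remaining integration constants.'' It does not. If $R(z)$ satisfies properties (1)--(3), then so does $R(z)e^{a(z)}$ for any constant (i.e.\ $t$- and $\tfQ$-independent) diagonal series $a(z)=\sum_{k\ \mathrm{odd}}a_kz^k$ with entries in $\bST$: since $e^{a(z)}$ is diagonal and constant, $\Psi R e^{a(z)}e^{U/z}=\bigl(\Psi R e^{U/z}\bigr)e^{a(z)}$, whose columns are constant linear combinations of flat sections and hence still flat, so property (2) holds; the expansion still begins with $\one$; and unitarity survives because $\bigl(Re^{a}\bigr)^T(-z)\bigl(Re^{a}\bigr)(z)=e^{a(-z)+a(z)}$, which equals $\one$ precisely when the even coefficients of $a$ vanish --- the odd ones are unconstrained. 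Concretely, in your recursion $[dU,R_{d+1}]=dR_d+\Theta R_d$ the diagonal integration constants at each odd order are invisible to unitarity. What kills this residual ambiguity is property (4): if both $R$ and $Re^{a(z)}$ have the limit \eqref{eqn:R-at-zero} as $\tfQ,\tau''\to 0$, then, that limit being an invertible matrix series (a Fourier conjugate of a diagonal matrix of exponentials), $e^{a(z)}=\one$, hence $a=0$. So the large-radius computation cannot be deferred to a verification carried out after uniqueness is ``established''; the correct logic is (i) classify the ambiguity left by (1)--(3) as odd constant diagonal exponentials, and (ii) show that condition (4) fixes it. With that reordering, the rest of your argument matches the content of the references the paper invokes.
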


Each matrix in (2) of Theorem \ref{R-matrix} represents an operator with respect to the classical canonical basis $\{ \hat{\phi}_{\bsi}: \bsi\in I_\cX\}$.
So $R^T$ is the adjoint of $R$ with respect to the $\bT$-equivariant
Poincar\'{e} pairing $(\ , \ )_{\cX,\bT}$.
The matrix $(\tS_{\bsi'}^{\spa \widehat{\bsi}})(z)$ is of the form
$$
\tS_{\bsi'}^{\spa\widehat{\bsi}}(z)
= \sum_{\bsi''\in I_\cX} \Psi_{\bsi'}^{\spa \bsi''}
R_{\bsi''}^{\spa \bsi}(z) e^{u^{\bsi}/z}
=(\Psi R(z))_{\bsi'}^{\spa \bsi} e^{u^{\bsi}/z}
$$
where $R(z)= (R_{\bsi''}^{\spa\bsi}(z)) = \one + \sum_{k=1}^\infty R_k z^k$.

We call the unique $R(z)$ in Theorem \ref{R-matrix} the {\em A-model $R$-matrix}.
The A-model $R$-matrix plays a central role in the quantization formula of the descendent potential of $\bT$-equivariant Gromov-Witten
theory of $\cX$. We will state this formula in terms of graph sum in the the next subsection.

\subsection{Graph sum formula for descendent Gromov-Witten potentials}\label{sec:A}
\begin{itemize}
\item We define
$$
S^{\widehat{\underline{\bsi}}}_{\spa \widehat{\underline{\bsi'}} }(z)
:= (\hat{\phi}_{\bsi}(t), \cS(\hat{\phi}_{\bsi'}(t))).
$$
Then $(S^{ \widehat{\underline{\bsi}}  }_{\spa \widehat{\underline{\bsi'}} }(z))$ is the matrix of the $\cS$-operator with
respect to the normalized canonical basis  $\{ \hat{\phi}_{\bsi}(t): \bsi\in I_\cX\} $:
\begin{equation}
\cS(\hat{\phi}_{\bsi'}(t))=\sum_{\bsi\in I_\cX} \hat{\phi}_{\bsi}(t)
S^{\widehat{\underline{\bsi}} }_{\spa \widehat{\underline{\bsi'}} }(z).
\end{equation}
\item We define
$$
S^{\widehat{\underline{\bsi}}}_{\spa \bsi'}(z)
:= (\hat{\phi}_{\bsi}(t), \cS(\phi_{\bsi'})).
$$
Then $(S^{ \widehat{\underline{\bsi}}  }_{\spa \bsi'}(z))$ is the matrix of the $\cS$-operator with
respect to the  basis $\{\phi_{\bsi}:\bsi\in I_\cX\}$ and
$\{ \hat{\phi}_{\bsi}(t): \bsi\in I_\cX\} $:
\begin{equation}
\cS(\phi_{\bsi'})=\sum_{\bsi\in I_\cX} \hat{\phi}_{\bsi}(t)
S^{\widehat{\underline{\bsi}} }_{\spa \bsi'}(z).
\end{equation}
\end{itemize}

Given a connected graph $\Ga$, we introduce the following notation.
\begin{enumerate}
\item $V(\Ga)$ is the set of vertices in $\Ga$.
\item $E(\Ga)$ is the set of edges in $\Ga$.
\item $H(\Ga)$ is the set of half edges in $\Ga$.
\item $L^o(\Ga)$ is the set of ordinary leaves in $\Ga$. The ordinary
leaves are ordered: $L^o(\Ga)=\{l_1,\ldots,l_n\}$ where
$n$ is the number of ordinary leaves.
\item $L^1(\Ga)$ is the set of dilaton leaves in $\Ga$. The dilaton leaves are unordered.
\end{enumerate}

With the above notation, we introduce the following labels:
\begin{enumerate}
\item (genus) $g: V(\Ga)\to \bZ_{\geq 0}$.
\item (marking) $\bsi: V(\Ga) \to I_\cX$. This induces
$\bsi :L(\Ga)=L^o(\Ga)\cup L^1(\Ga)\to I_\cX$, as follows:
if $l\in L(\Ga)$ is a leaf attached to a vertex $v\in V(\Ga)$,
define $\bsi(l)=\bsi(v)$.
\item (height) $k: H(\Ga)\to \bZ_{\geq 0}$.
\end{enumerate}

Given an edge $e$, let $h_1(e),h_2(e)$ be the two half edges associated to $e$. The order of the two half edges does not affect the graph sum formula in this paper. Given a vertex $v\in V(\Ga)$, let $H(v)$ denote the set of half edges
emanating from $v$. The valency of the vertex $v$ is defined to be the cardinality of the set $H(v)$: $\val(v)=|H(v)|$.
A labeled graph $\vGa=(\Ga,g,\bsi,k)$ is {\em stable} if
$$
2g(v)-2 + \val(v) >0
$$
for all $v\in V(\Ga)$.

Let $\bGa(\cX)$ denote the set of all stable labeled graphs
$\vGa=(\Gamma,g,\bsi,k)$. The genus of a stable labeled graph
$\vGa$ is defined to be
$$
g(\vGa):= \sum_{v\in V(\Ga)}g(v)  + |E(\Ga)|- |V(\Ga)|  +1
=\sum_{v\in V(\Ga)} (g(v)-1) + (\sum_{e\in E(\Gamma)} 1) +1.
$$
Define
$$
\bGa_{g,n}(\cX)=\{ \vGa=(\Gamma,g,\bsi,k)\in \bGa(\cX): g(\vGa)=g, |L^o(\Ga)|=n\}.
$$

We assign weights to leaves, edges, and vertices of a labeled graph $\vGa\in \bGa(\cX)$ as follows.
\begin{enumerate}
\item {\em Ordinary leaves.}
To each ordinary leaf $l_j \in L^o(\Ga)$ with  $\bsi(l_j)= \bsi\in I_\Si$
and  $k(l)= k\in \bZ_{\geq 0}$, we assign the following descendant  weight:
\begin{equation}\label{eqn:u-leaf}
(\cL^{\bu})^{\bsi}_k(l_j) = [z^k] (\sum_{\bsi',\bsi''\in I_\cX}
\left(\frac{\bu_j^{\bsi'}(z)}{\sqrt{\Delta^{\bsi'}(t)} }
S^{\widehat{\underline{\bsi''}} }_{\spa
  \widehat{\underline{\bsi'}}}(z)\right)_+ R(-z)_{\bsi''}^{\spa \bsi} ),
\end{equation}
where $(\cdot)_+$ means taking the nonnegative powers of $z$.

\item {\em Dilaton leaves.} To each dilaton leaf $l \in L^1(\Ga)$ with $\bsi(l)=\bsi
\in I_\cX$
and $2\leq k(l)=k \in \bZ_{\geq 0}$, we assign
$$
(\cL^1)^{\bsi}_k := [z^{k-1}](-\sum_{\bsi'\in I_\cX}
\frac{1}{\sqrt{\Delta^{\bsi'}(t)}}
R_{\bsi'}^{\spa \bsi}(-z)).
$$

\item {\em Edges.} To an edge connecting a vertex marked by $\bsi\in I_\cX$ and a vertex
marked by $\bsi'\in I_\cX$, and with heights $k$ and $l$ at the corresponding half-edges, we assign
$$
\cE^{\bsi,\bsi'}_{k,l} := [z^k w^l]
\Bigl(\frac{1}{z+w} (\delta_{\bsi\bsi'}-\sum_{\bsi''\in I_\cX}
R_{\bsi''}^{\spa \bsi}(-z) R_{\bsi''}^{\spa \bsi'}(-w)\Bigr).
$$
\item {\em Vertices.} To a vertex $v$ with genus $g(v)=g\in \bZ_{\geq 0}$ and with
marking $\bsi(v)=\bsi$, with $n$ ordinary
leaves and half-edges attached to it with heights $k_1, ..., k_n \in \bZ_{\geq 0}$ and $m$ more
dilaton leaves with heights $k_{n+1}, \ldots, k_{n+m}\in \bZ_{\geq 0}$, we assign
$$
 \Big(\sqrt{\Delta^{\bsi}(t)}\Big)^{2g(v)-2+\val(v)}\langle  \tau_{k_1}\cdots\tau_{k_{n+m}}\rangle_g,
$$
where $\langle  \tau_{k_1}\cdots\tau_{k_{n+m}}\rangle_{g}=\int_{\Mbar_{g,n+m}}\psi_1^{k_1} \cdots \psi_{n+m}^{k_{n+m}}$.
\end{enumerate}

We define the weight of a labeled graph $\vGa\in \bGa(\cX)$ to be
\begin{eqnarray*}
w_A^{\bu}(\vGa) &=& \prod_{v\in V(\Ga)} \Bigl(\sqrt{\Delta^{\bsi(v)}(t)}\Bigr)^{2g(v)-2+\val(v)} \langle \prod_{h\in H(v)} \tau_{k(h)}\rangle_{g(v)}
\prod_{e\in E(\Ga)} \cE^{\bsi(v_1(e)),\bsi(v_2(e))}_{k(h_1(e)),k(h_2(e))}\\
&& \cdot \prod_{l\in L^1(\Ga)}(\cL^1)^{\bsi(l)}_{k(l)}\prod_{j=1}^n(\cL^{\bu})^{\bsi(l_j)}_{k(l_j)}(l_j).
\end{eqnarray*}

With the above definition of the weight of a labeled graph, we have
the following theorem which expresses the $\bT$-equivariant descendent
Gromov-Witten potential of $\cX$ in terms of graph sum.

\begin{theorem}[{\cite{Zo}}]
\label{thm:Zong}
 Suppose that $2g-2+n>0$. Then
$$
\ll \bu_1,\ldots, \bu_n\gg_{g,n}^{\cX,\bT}=\sum_{\vGa\in \bGa_{g,n}(\cX)}\frac{w_A^{\bu}(\vGa)}{|\Aut(\vGa)|}.
$$
\end{theorem}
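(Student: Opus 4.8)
The plan is to recognize the stated formula as the Givental--Teleman quantization formula for the semisimple cohomological field theory (CohFT) underlying the $\bT$-equivariant Gromov--Witten theory of $\cX$, and then to expand that formula graphically. First I would package the $\bT$-equivariant Gromov--Witten invariants of $\cX$, with the descendent classes $\hat\psi_i$ pulled back from the coarse moduli space, into a CohFT over the ground ring $\novT$, evaluated at the general ``big'' point $t=\sum_{\bsi}\hat t^{\bsi}\hat\phi_{\bsi}$ (so that the summation over extra $t$-insertions in $\ll\ \gg^{\cX,\bT}$ is absorbed into working on the Frobenius manifold $\hat H$). By Section \ref{sec:QH} the classical algebra $H^*_{\CR,\bT}(\cX)\otimes_\RT\bST$ is semisimple, with canonical idempotent basis $\{\phi_{\bsi}\}$; hence $QH^*_\bT(\cX)$ is generically semisimple and the associated CohFT is semisimple. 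This is exactly the setting in which Teleman's classification applies.

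Next I would invoke the Givental--Teleman reconstruction theorem: a semisimple CohFT is obtained from its topological (degree-zero) part by the action of a uniquely determined $R$-matrix. The topological part is a product, over the idempotents $\bsi\in I_\cX$, of one-dimensional trivial CohFTs normalized by the canonical coordinates $u^{\bsi}$ and the factors $\Delta^{\bsi}(t)$; by the Witten--Kontsevich theorem each such factor contributes the intersection numbers $\langle\tau_{k_1}\cdots\tau_{k_{n+m}}\rangle_g=\int_{\Mbar_{g,n+m}}\psi_1^{k_1}\cdots\psi_{n+m}^{k_{n+m}}$ at its vertices, which requires the stability hypothesis $2g-2+n>0$. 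By the characterization in Theorem \ref{R-matrix} --- existence, uniqueness, the unitarity $R^T(-z)R(z)=\one$, and the initial condition \eqref{eqn:R-at-zero} --- the $R$-matrix occurring in the reconstruction is forced to be the A-model $R$-matrix, so that the fundamental solution factors as $\tS=\Psi R(z)e^{U/z}$.

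I would then carry out the standard graphical expansion of the $R$-matrix action, following \cite{Gi97,Gi96b} and \cite{Zo}. Expanding $\hat R$ applied to the product of Witten--Kontsevich tau functions produces a sum over stable labeled graphs $\vGa\in\bGa_{g,n}(\cX)$ weighted by $1/|\Aut(\vGa)|$: each vertex $v$ contributes $(\sqrt{\Delta^{\bsi(v)}(t)})^{2g(v)-2+\val(v)}\langle\prod_{h\in H(v)}\tau_{k(h)}\rangle_{g(v)}$, where the $\sqrt{\Delta}$ powers come from the idempotent normalization; each edge contributes the propagator $\cE^{\bsi,\bsi'}_{k,l}=[z^kw^l]\frac{1}{z+w}\bigl(\delta_{\bsi\bsi'}-\sum_{\bsi''\in I_\cX}R_{\bsi''}^{\spa\bsi}(-z)R_{\bsi''}^{\spa\bsi'}(-w)\bigr)$, which is manifestly built from the unitary $R$-matrix; and each dilaton leaf contributes $(\cL^1)^{\bsi}_k$, the dilaton-shifted $R$-matrix entry.

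The remaining step --- and the one I expect to be the main obstacle --- is to convert the ancestor potential produced by Teleman's formula into the descendent generating function $\ll\bu_1,\dots,\bu_n\gg^{\cX,\bT}_{g,n}$, and thereby to pin down the ordinary-leaf weight \eqref{eqn:u-leaf}. This conversion is effected by the $\cS$-operator: each descendent insertion $\bu_j(\hat\psi)$ is pushed through the fundamental solution, producing the composite $\frac{\bu_j^{\bsi'}(z)}{\sqrt{\Delta^{\bsi'}(t)}}S^{\widehat{\underline{\bsi''}}}_{\spa\widehat{\underline{\bsi'}}}(z)$, followed by truncation to nonnegative powers of $z$ (the operation $(\cdot)_+$, reflecting that ancestor insertions carry only nonnegative $\hat\psi$-powers) and a final factor $R(-z)_{\bsi''}^{\spa\bsi}$ attaching the leaf to its vertex. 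Carefully tracking this ancestor-to-descendent change of variables --- in particular the $(\cdot)_+$ truncation, the placement of the normalization $\sqrt{\Delta^{\bsi'}(t)}$, and the sign convention $z\mapsto-z$ in the $R$-matrix factors --- is the delicate bookkeeping that reproduces \eqref{eqn:u-leaf} exactly. Once all four classes of weights match the Givental--Teleman expansion termwise, the asserted identity $\ll\bu_1,\dots,\bu_n\gg^{\cX,\bT}_{g,n}=\sum_{\vGa\in\bGa_{g,n}(\cX)}w_A^{\bu}(\vGa)/|\Aut(\vGa)|$ follows.
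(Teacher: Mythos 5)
The paper itself does not prove this statement: Theorem \ref{thm:Zong} is quoted from \cite{Zo}, where it is established by virtual localization in the style of Givental's GKM argument --- the $\bT$-fixed loci contribute Hurwitz--Hodge integrals, these are converted into $R$-matrix form by (orbifold) quantum Riemann--Roch, and descendents are then traded for ancestors through the $\cS$-operator. Your proposal instead takes the abstract Givental--Teleman route (semisimple CohFT, Teleman reconstruction, DOSS-type graph expansion, $\cS$-operator conversion). In outline this is a legitimate alternative, and your description of the vertex, edge, dilaton-leaf and ordinary-leaf bookkeeping --- including the $(\cdot)_+$ truncation and the $R(-z)$ factor in \eqref{eqn:u-leaf} --- has the right shape.

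However, there is a genuine gap at the step where you claim the reconstruction $R$-matrix ``is forced to be the A-model $R$-matrix'' by the uniqueness in Theorem \ref{R-matrix}. The uniqueness there is uniqueness of a matrix satisfying \emph{all four} conditions, and conditions (1)--(3) alone do not determine $R$: any $R\exp\bigl(\sum_{k\,\mathrm{odd}} C_k z^k\bigr)$ with $C_k$ constant diagonal matrices satisfies them as well, and different choices change the edge and leaf weights, hence the graph sum. So your appeal to uniqueness requires verifying that Teleman's $R$-matrix satisfies the normalization (4), i.e. that its limit as $\tfQ,\tau''\to 0$ is the Bernoulli-polynomial expression \eqref{eqn:R-at-zero}. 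That verification amounts to computing the $R$-matrix of the limiting theory --- the disjoint union of the local orbifold theories $[\bC^3/\bZ_p]$ at the two fixed points --- and needs the Hurwitz--Hodge/quantum Riemann--Roch input that is precisely the computational core of the localization proof in \cite{Zo}; it cannot be obtained from uniqueness alone. A secondary point you should address: Teleman's classification is a theorem about abstract CohFTs over a field with flat unit, and since $\cX$ is noncompact its equivariant theory is defined by localization residues; one must check (or cite) that these residue classes satisfy the CohFT axioms over $\bST$, with the formal Novikov and coordinate parameters handled appropriately, before the classification can be invoked at all.
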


\subsection{Equivariant $J$-function}\label{section-J-function}
The equivalent $J$-function is a cohomology-valued function defined by the $\cS$-operator.
\begin{defn}
    [$\bT$-equivariant big $J$-function] \rm
    The $\bT$-equivariant big $J$-function $J^\Bigbig_\bT(z)$ is characterized by
    $$
        (J^\Bigbig_\bT(z),a)_{\cX,\bT} = (1,\cS(a))_{\cX,\bT}
    $$
    for any $a\in H_{\CR,\bT}^*(\cX;\bST)$. Equivalently, let $\{T_i\}_{i=1}^{2p}$ be a homogeneous basis of $H_{\CR,\bT}^*(\cX;\bST)$
    and $\{T^i\}_{i=1}^{2p}$ be its dual basis, then
    \[
        \Jbig = 1 + \sum_{i=1}^{2p} \ll 1,\frac{T_i}{z-\hat{\psi}}\gg_{0,2}^{\X,\bT}T^i
    \]
\end{defn}

The $\bT$-equivariant small $J$-function $J_\bT(\btau,z)$ is the restriction of the $\bT$-equivariant big $J$-function to the small
phase space. More precisely, given $\btau\in H_{\CR,\bT}^2(\cX;\bST)$, let
\[
    J_\bT(\btau,z) := \Jbig|_{t=\btau, \fQ=1}
\]
\[
    J_\bT(\btau,z) = 1 + \sum_{l=0}^\infty \sum_{d\in E(\X)} \frac{1}{l!}\sum_{i=1}^{2p}
    \langle 1,\btau^l, \frac{T_i}{z-\hat{\psi}} \rangle_{0,l+2,d}^{\X,\bT}T^i
\]
Let $G_\sigma = \Zp$ be the stabilizer of the stacky point $\fp_\sigma$. We index the elements in $\Zp$ by the set of fractions
\[
    \cI = \{h\in\bQ | h = \frac{i}{p}, 0\leq i <p\}
\]
via $h\in\cI\mapsto [ph]\in\Zp$. The weights of $\bT$-action on $T_{\fp_0}\X$ are given by
\[
    \sw_{1,0} = \frac{1}{p}\su_1, \quad \sw_{2,0} = \su_2, \quad \sw_{3,0} = -\frac{1}{p}\su_1 - \su_2
\]
We will pullback the small $J$-function $J_\bT$ to the chart $\X_0$, which contains the $\T$-invariant holomorphic disk $[D_\epsilon/\Zp]$.
Hence we will specifically consider the sub-torus $\T= \ker(k\su_1+r\su_2)$. Define the morphism $\iota_{r,s}^*: H^*_{\CR,\bT}(\X)
\rightarrow H^*_{\CR,\T}(\X)$ by:
\[
    \su_1\mapsto r\sv, \quad \su_2\mapsto -k\sv.
\]
The weights of $\T$-action on $T_{\fp_0}\X$ are $w_1\sv$, $w_2\sv$, $w_3\sv$, where
\[
    w_1 = \frac{r}{p},\quad w_2 = -\frac{r+s}{p} = -k , \quad w_3 = \frac{s}{p}.
\]
Given $h\in G_\si$, define $\one_{\sigma,h}^* = p\iota_{r,s}^*\iota_{\sigma*}\one_{h^{-1}}$. Take $\sigma = 0$, we get
\[
    \iota_0^*J_\bT(\btau,z)|_{\su_1 = r\sv, \su_2 = -k\sv} = \sum_{h\in\cI}J_{0,h}(\btau,z)\one_{h}
\]
where
\[
    J_{0,h}(\btau,z) = \delta_{h,0} + \sum_{l=0}^\infty \sum_{d\in E(\X)}
    \frac{1}{l!}\DescdentGwol{1, (\iota_{r,s}^*\btau)^l,\frac{\one_{0,h}^*}{z-\hat{\psi}}}{l+2}
\]

\subsection{Equivariant $I$-function and genus zero mirror symmetry}
In this section, we will define the equivariant $I$-function of $\X$ following \cite{FLT22}.
Recall the fan $\Sigma$ in Section \ref{sec:equivariantCR} (see Figure \ref{fig:fan}), by the construction of $\Sigma$,
there is a surjective group homomorphism
\[
    \phi: \tilde{N} := \bigoplus_{i=1}^{p+3} \bZ\tilde{b_i} \rightarrow N, \quad \tilde{b_i}\mapsto b_i
\]
Let $\bL =\ker(\phi)\cong \bZ^p$. Then we have a short exact sequence of abelian groups
\[
    0\rightarrow \mathbb{L}\xrightarrow{\psi} \mathbb{Z}^{p+3} \xrightarrow{\phi} \mathbb{Z}^3 \rightarrow 0.
\]
Let $\{e_1,\dots,e_p\}$ be a $\bZ$-basis of $\bL\cong\bZ^p$. Under the basis of $\bL, \tilde{N}$ and $N$, we get
\begin{equation*}
    \phi =\left[
    \begin{array}{cccccccc}
        p & 0 & 0 & p & 1 & 2 & \dots  & p-1\\
        0 & 1 & 0 &-1 & 0 & 0 & \dots  & 0\\
        1 & 1 & 1 & 1 & 1 & 1 & \dots  & 1
    \end{array}
    \right]
\end{equation*}
\begin{equation*}
    \psi =
    \left[
    \begin{array}{ccccc}
        -1 & -1 & -2 & \dots & -p+1\\
         1 & 0 & 0 & \dots & 0\\
        -1 & -p+1 & -p+2 & \dots & -1 \\
         1 & 0 & 0 & \dots & 0 \\
         0 & p & 0 & \dots & 0 \\
         0 & 0 & p & \dots & 0 \\
        \vdots & \vdots & \vdots & \ddots & \vdots \\
         0 & 0 & 0 & \dots & 0 \\
         0 & 0 & 0 & \dots & p
    \end{array}
    \right]
\end{equation*}
Let $\{e_1^\vee,\dots,e_p^\vee\}$ be the dual $\bZ$-basis of $\bL^\vee$, and define $D_i\in\bL^\vee$ as row vectors of $\psi$:
\[
    D_1 = (-1,-1,\dots,-p+1), ~ D_3 = (-1,-p+1,\dots, -1),
\]
\[
    D_2 = D_4 = (1, 0,\dots, 0),
\]
\[
    D_5 = (0,p,0,\dots,0),~\dots, ~D_{p+3}=(0,\dots,0,p).
\]
Let $\bL_\bC:= \bL\otimes_\bZ \bC$. Then there is an isomorphism of vector spaces over $\bC$:
\[
    H^2(\X;\bC)\cong \bL_\C^\vee/\oplus_{i=5}^{p+3}\bC D_i.
\]
We denote the image of $\{D_i\}_{i=1}^4$ in $H^2(\X;\bC)$ by $\bar{D}_i$.
Let $\mathbb{K}_\eff$ be the lattice in the extended Mori-cone of $\X$. In our case,
\[
    \mathbb{K}_\eff = \mathbb{Z}_{\geq 0}e_1 \oplus \frac{1}{p}\mathbb{Z}_{\geq 0}e_2 \oplus\dots \oplus \frac{1}{p}\mathbb{Z}_{\geq 0}e_{p} \subset \bL_\bQ:=\bL\otimes_\bZ \bQ.
\]
Let $\beta = (a_1,\dots, a_n)\in\mathbb{K}_\eff$, $a_i\in\mathbb{Z}$, represent the element
\[
    \beta = a_1e_1 + \frac{a_2}{p}e_2 +\dots + \frac{a_p}{p}e_p \in \mathbb{K}_\eff,
\]
and let
\[
    w(\beta) := \frac{1}{p}\sum_{m=1}^{p-1} ma_{m+1}.
\]
Then
\[
    \langle D_1, \beta\rangle = -a_1 - w(\beta),~ \langle D_3, \beta\rangle = -a_1 - a_2 -\dots -a_p + w(\beta)
\]
\[
    \langle D_2,\beta\rangle = \langle D_4,\beta\rangle = a_1,~ \langle D_5,\beta\rangle = a_2,~\dots, ~\langle D_{p+3},\beta\rangle =a_p.
\]
Let
\[
    \text{Box}(\X) := \{(0,0,0)\}\cup\{b_i\in N|5\leq i\leq p+3\}.
\]
Then there is a bijection between $\text{Box}(\X)$ and $G_\si$, $\si=0,1$.
As a graded vector space over $\bC$,
\[
    H^*_{\CR,\bT}(\X) = \bigoplus_{v\in \text{Box}(\X)}H^*_{\bT}(\X_v)[2\age(v)].
\]
The elements $\bar{D}_i\in H^2(\X)$ can be lifted as elements $\bar{D}^\bT_i$ in $H^2_{\CR,\bT}(\X)$.
Let
$\bar{H}^\bT:= \bar{D}_4^\bT\in H^2_{\bT}(\X)$ be the unique $\bT$-equivariant lifting of $\bar{D}_4\in H^2(\X)$ such that $\bar{H}^\bT|_{\fp_0}=0$.
For $i=1,2,3$, $\bar{D}_i^\bT = \sw_{i,0} + (-1)^i\bar{H}^\bT$. Let $\one_v$ be the unit in $H^*_{\bT}(\X_v)$.
Then
\[
    H^{2}_{\CR,\bT}(\X) = \bC\su_1 \oplus\bC\su_2 \oplus\bC \bar{H}^\bT\oplus \bigoplus_{a=2}^p \bC\one_{b_{a+3}}.
\]
Any $\btau\in H_{\CR,\bT}^{2}(\X)$ can be written as
\[
    \btau = \tau_0 + \btau_2 = \tau_0 + \tau_1\bar{H}^\bT + \sum_{a=2}^p\tau_a\one_{b_{a+3}},
\]
where $\tau_0\in H^*_\bT(\pt) = \bC\su_1 \oplus\bC\su_2$ and $\tau_1,\dots,\tau_p\in\bC$.
Let $(q_0',q_1,\dots,q_p)$ be formal variables, the $I$-function of $\X$ is
\begin{equation*}
    \begin{aligned}
        I_\bT(q_0',q,z) = &e^{\frac{\log q_0' + \bar{H}^\bT\log q_1}{z}}
        \sum_{\beta\in\mathbb{K}_\eff} \frac{q_1^{a_1}\dots q_p^{a_p}}{z^{1-\delta_{\langle w(\beta)\rangle,0}}}
        \frac{\prod_{m=-a_1+\lceil -w(\beta)\rceil}^{-1}(\frac{\bar{D}^\bT_1}{z}-(a_1+\wbeta+m))}{\prod_{m=0}^{a_1-1}(\frac{\bar{D}^\bT_2}{z}+a_1 -m)}
        \\
        &\cdot\frac{
            \prod_{m=-a_1-a_2-\dots-a_p +\lceil w(\beta)\rceil}^{-1}(\frac{\bar{D}^\bT_3}{z}-(a_1+\dots+a_p+m)+\wbeta)}
        {\prod_{m=0}^{a_1-1}(\frac{\bar{D}_4^\bT}{z}+a_1 -m)a_2!\dots a_p!}\one_{v(\beta)}
    \end{aligned}
\end{equation*}
where $v(\beta) = \langle w(\beta)\rangle b_1 + \langle -w(\beta)\rangle b_3$.
There is a $\bT$-equivariant mirror theorem from \cite{CCIK15,CCIT15}:
\begin{theorem}\label{thm:mirror theorem}
\[
    e^{\frac{\tau_0(q_0',q)}{z}}J_\bT(\btau_2(q),z) = I_\bT(q_0',q,z),
\]
where the equivariant closed mirror map $(q_0',q)\mapsto \btau(q_0',q)$ is determined by the first-order term in the asymptotic expansion
of the $I$-function
\[
    I_\bT(q_0',q,z) = 1+ \frac{\btau(q_0',q)}{z} + o(z^{-1}).
\]
More explicitly, the equivariant closed mirror map is given by
\[
    \btau = \log(q_0') + \log(q_1)\bar{H}^\bT + \sum_{a=2}^p \tau_a(q)\one_{b_{a+3}},
\]
where $\tau_0(q_0', q) = \log (q_0')$, $\tau_1(q) = \log(q_1)$, and
for $2\leq a\leq p$
\begin{equation*}
    \begin{aligned}
        \tau_a(q) =& \sum_{a_1,\dots,a_p,l\geq 0\atop a_2+\dots + (p-1)a_p = pl+a-1}
    \frac{
        q_1^{a_1}\dots q_p^{a_p}}{
            (a_1!)^2
            a_2!\dots a_p!
        }\prod_{m=-a_1-l}^{-1}(-a_1-l-\frac{a-1}{p}-m)
        \\
    \cdot&\prod_{m=-a_1-\dots-a_p+l+1}^{-1}(-a_1-\dots-a_p+l+\frac{a-1}{p}-m).
    \end{aligned}
\end{equation*}
\end{theorem}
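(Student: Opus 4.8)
The plan is to obtain this genus-zero identity as a specialization of the toric mirror theorem of \cite{CCIK15,CCIT15}, whose content is that a suitably normalized hypergeometric $I$-function of a toric Deligne--Mumford stack lies on Givental's Lagrangian cone. I work in Givental's symplectic formalism: let
\[
\mathcal{H}=H^*_{\CR,\bT}(\X;\bST)(\!(z^{-1})\!),
\]
equipped with the symplectic pairing built from $(\ ,\ )_{\X,\bT}$, and let $\cL_\X\subset \mathcal{H}$ be the overruled Lagrangian cone encoding the genus-zero $\bT$-equivariant Gromov--Witten theory of $\X$. The $\cS$-operator and the big $J$-function of Section \ref{section-J-function} are the standard data attached to $\cL_\X$: for each $\btau$ the point $J_\bT(\btau,z)=1+\btau/z+o(z^{-1})$ lies on $\cL_\X$, and the small $J$-function sweeps out the distinguished slice of $\cL_\X$. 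Concretely, Givental's uniqueness criterion (see \cite[Section 10.2]{CK}, and \cite{Ir09} in the orbifold case) asserts that any family $f(q,z)$ lying on $\cL_\X$ whose expansion has no positive powers of $z$ and takes the form $1+\btau(q)/z+o(z^{-1})$ must equal $J_\bT(\btau(q),z)$.

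The first and main step is to identify the explicit series $I_\bT(q_0',q,z)$ written above with the CCIT $I$-function determined by the combinatorial data of $\X$ assembled in Section \ref{sec:equivariantCR} and in the present subsection: the fan $\Sigma$, the map $\phi$ and its kernel $\bL$, the charge vectors $D_1,\dots,D_{p+3}$, the extended Mori cone $\mathbb{K}_\eff$, and $\text{Box}(\X)$. For $\beta\in\mathbb{K}_\eff$ one must check that the two hypergeometric products match the pairings $\langle D_1,\beta\rangle,\dots,\langle D_{p+3},\beta\rangle$ computed here in terms of $a_1,\dots,a_p$ and $\wbeta$, that the ceiling cutoffs $\lceil\pm\wbeta\rceil$ correctly isolate the fractional part of each shift, and that the insertion $\one_{v(\beta)}$ with $v(\beta)=\langle \wbeta\rangle b_1+\langle -\wbeta\rangle b_3$ records the age-$2\age$ box element dictated by $\beta$. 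Granting this identification, the toric mirror theorem of \cite{CCIK15,CCIT15} yields at once that $I_\bT(q_0',q,z)\in\cL_\X$ for all $(q_0',q)$.

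Given that $I_\bT$ lies on $\cL_\X$ with expansion $I_\bT=1+\btau(q_0',q)/z+o(z^{-1})$ and no positive powers of $z$ (so that no Birkhoff factorization is needed), the uniqueness criterion forces $I_\bT(q_0',q,z)=J_\bT(\btau(q_0',q),z)$, where $\btau=\tau_0+\btau_2$ is read off from the $z^{-1}$-coefficient, $\tau_0$ being the component along the unit $\one$. It remains to strip off the unit direction: by the string equation, translating the base point of the $J$-function along $\tau_0\one$ acts as multiplication by $e^{\tau_0/z}$, i.e.
\[
J_\bT(\tau_0\one+\btau_2,z)=e^{\tau_0/z}\,J_\bT(\btau_2,z).
\]
Combining the two displays gives $e^{\tau_0/z}J_\bT(\btau_2(q),z)=I_\bT(q_0',q,z)$, which is the asserted identity.

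Finally, the explicit mirror map is obtained by extracting the $z^{-1}$-coefficient of $I_\bT$. The prefactor $e^{(\log q_0'+\bar H^\bT\log q_1)/z}$ together with the $\beta=0$ term yields $\tau_0=\log q_0'$ and $\tau_1=\log q_1$; the coefficient of each degree-two twisted-sector class $\one_{b_{a+3}}$ (for $2\le a\le p$) at order $z^{-1}$ is the sum over $\beta\in\mathbb{K}_\eff$ with $\langle\wbeta\rangle=(a-1)/p$ and total $z$-degree $z^{-1}$, which after evaluating the two truncated products reproduces the stated closed formula for $\tau_a(q)$. I expect the main obstacle to be exactly the combinatorial bookkeeping of the second paragraph: reconciling the ceiling cutoffs, the fractional-part conventions, and the box-shift $v(\beta)$ with the normalization of the CCIT $I$-function, so that the sign and age conventions there agree with the weights $\sw_{i,\si}$ and ages $c_i(h)$ fixed in Section \ref{sec:equivariantCR}. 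The passage from ``$I_\bT$ on the cone'' to $J_\bT$ and the string-equation factorization are then formal.
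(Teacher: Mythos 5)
Your proposal is correct and coincides with the paper's approach: the paper offers no independent proof of this statement, quoting it directly as the $\bT$-equivariant mirror theorem of \cite{CCIK15,CCIT15} specialized to $\X$, which is precisely the reduction you carry out. The details you supply---matching the explicit series with the CCIT extended $I$-function via the charge vectors $D_i$, $\mathbb{K}_\eff$ and box elements, invoking the Lagrangian-cone uniqueness criterion (no Birkhoff factorization being needed since $I_\bT=1+O(z^{-1})$), factoring out $e^{\tau_0/z}$ by the string equation, and extracting $\tau_a(q)$ from the $z^{-1}$ coefficient---are exactly the standard steps implicit in that citation.
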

Now let
\[
    \iota_0^*\bar{D}^\bT_1= \frac{r}{p}\sv, ~\iota_0^*\bar{D}^\bT_2= -k\sv,
    ~\iota_0^*\bar{D}^\bT_3= \frac{s}{p}\sv, ~\iota_0^*\bar{D}^\bT_4= 0,
\]
\[
    \iota_0^*I_\bT(q_0',q,z) = \sum_{h\in\cI} I_{0,h}(q_0',q,z)\one_h.
\]
By mirror theorem, we have
\begin{equation*}
    \begin{aligned}
        J_{0,h}(\btau_2(q),z) &= e^{-\frac{\log q_0'}{z}}I_{0,h}(q_0',q,z)
        \\
        &= \sum_{\beta\in\mathbb{K}_\eff\atop \langle w(\beta)\rangle = h} \frac{q_1^{a_1}\dots q_p^{a_p}}{z^{1-\delta_{h,0}}}
        \frac{\prod_{m=-a_1+\lceil -w(\beta)\rceil}^{-1}(\frac{r\sv}{pz}-(a_1+\wbeta+m))}{\prod_{m=0}^{a_1-1}(\frac{-k\sv}{z}+a_1 -m)}
        \\
        \cdot&\frac{
            \prod_{m=-a_1-a_2-\dots-a_p +\lceil w(\beta)\rceil}^{-1}(\frac{s\sv}{pz}-(a_1+\dots+a_p+m)+\wbeta)}
        {a_1!a_2!\dots a_p!}.
    \end{aligned}
\end{equation*}

\subsection{Open-closed Gromov-Witten invariants}\label{sec:open-closed-GW}
Consider the pair of the orbifold resolved conifold $\X$ with Lagrangian $\Lrs$ constructed in Section \ref{sec:conifold transition}.
Given any partition $\vec{\mu}$ with $l(\vec{\mu})=n$ and $g,d\in\bZ_{\geq 0}$, we define
\[
    \overline{\cM}_{g,l,d,\vec{\mu}} := \overline{\cM}_{g,l;n}(\X, \Lrs| d; \mu_1, \dots, \mu_n)
\]
to be the moduli space of stable maps $u:(\Sigma, x_1,\dots,x_l,\partial\Sigma)\rightarrow(\X,\Lrs)$.
Here $\Sigma$ is a prestable genus $g$ bordered Riemann surface with interior stacky marked points $x_i = B\bZ_{r_i},i=1,\cdots,l$,
and $\partial\Sigma$ has $n$ connected components $R_j\cong S^1,j=1,\cdots,n$. We require that $u_*[\Sigma] = d+ (\sum_{j=1}^{n}\mu_j)b\in H_2(\cX,\Lrs)$ and $u_*([R_j])=\mu_jb\in H_1(\Lrs;\bZ)$, where $b$ is the generator of $H_1(\Lrs;\bZ)\cong\bZ$.
\par
As discussed in Section \ref{sec:conifold transition}, the $(\T)_{\bR}$-action preserves the pair $(\X,\Lrs)$, so it induces an action
on $\Mgldmu$. The fixed locus $\Mgldmu^{(\T)_\bR}$ of this action consists of the maps
\[
    f: D_1 \cup \dots\cup D_n\cup C \rightarrow (\X, \Lrs),
\]
Here $f|_{D_i}$ is a $\T$-invariant holomorphic morphism from the orbidisk $[\{z\in\bC | |z|\leq b_1\}/\bZ_{r_i}]$ and
$C$ is a (possibly empty) closed nodal curve and $f|_C$
is also a $\T$-invariant morphism. The point $z=0$ on each $D_i$ is the only possible nodal point on $D_i$.
\par
The $(\T)_{\bR}\cong S^1$-fixed locus $\Mgldmu^{(\T)_{\bR}}$ is compact. Given $\gamma_1,\dots,\gamma_l \in \HcrX{*}$, we define
\[
      \GWgl{\gamma_1,\dots,\gamma_l} = \int_{[\Mgldmu^{(\T)_{\bR}}]^\vir} \frac{
        \prod_{j=1}^{l} \ev_j^*\gamma_j
      }{
        e_{(\T)_{\bR}}(N^\vir)
      }
\]
where $N^\vir$ is the virtual normal bundle of $\Mgldmu^{(\T)_{\bR}}\subset \Mgldmu$.
\par
Let $\btau\in\HcrX{*}$, we define the A-model open Gromov-Witten potential associated to $(\X,\Lrs)$ as
\begin{equation}
    \label{eqn:open-potential}
    F^{\X,\Lrs}_{g,n}(\btau, \fQ; X_1,\dots,X_n)= \sum_{l=0}^\infty\sum_{d\in E(\X)}\sum_{\mu_1,\dots,\mu_n>0}
    \frac{\GWgl{\btau^l}}{l!}\fQ^dX_1^{\mu_1}\dots X_n^{\mu_n}
\end{equation}
Define the disk factor $D'(\mu)$ for $\mu\in\bZ_{>0}$
\begin{equation}\label{eqn:diskfactor}
        D'(\mu) = \frac{p}{\mu}\left(\frac{\sv}{\mu}\right)^{1-\delta_{\AlphaMu,0}}\frac{\prod_{j=1}^{\mu k-1}(-\mu k+j)}
        {\lfloor\frac{\mu r}{p}\rfloor!\lfloor\frac{\mu s}{p}\rfloor!}.
\end{equation}
By the same proof as \cite[Proposition 3.3]{FLT22}, we have the following formula
\begin{theorem}\label{thm:localization}
    Let $D'(\mu)$ be as in \eqref{eqn:diskfactor}, and let $\gamma_i\in H^*_{\CR,\T}(\X)$, $\vec{\mu} = (\mu_1, \dots, \mu_n)$. Then
    \[
        \langle \gamma_1,\dots,\gamma_l\rangle_{g,l,d,\vec{\mu}}^{\X,\Lrs,\T} = \prod_{j=1}^{n}D'(\mu_j)
        \cdot \int_{[\overline{\mathcal{M}}_{g,n+l}(\X,d)]^{\vir}}\frac{\prod_{i=1}^l \ev^*_i\gamma_i\prod_{j=1}^{n}\ev^*_{l+j}\rho_{0, \langle-\frac{\mu_j r}{p}\rangle}}
        {\prod_{j=1}^{n}\frac{\sv}{\mu_j}(\frac{\sv}{\mu_j} - \hat{\psi}_{l+j})}
    \]
    where $\rho_{0,h} = \iota_{r,s}^*\iota_{0_*}\one_h\in H^*_{\CR,\T}(\X)$.
\end{theorem}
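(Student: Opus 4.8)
The plan is to follow the virtual localization strategy of \cite[Proposition 3.3]{FLT22}, adapted to the orbifold resolved conifold $\X$ and the unique $\T$-invariant orbidisk $[D_\ep/\Zp]$ constructed in Section~\ref{sec:conifold transition}. The starting point is the description of the $(\T)_\bR$-fixed locus of $\Mgldmu$ recalled just above the statement: a fixed stable map is a union $f\colon D_1\cup\cdots\cup D_n\cup C\to(\X,\Lrs)$ in which each $D_j$ is a $\T$-invariant map from an orbidisk of winding number $\mu_j$ onto $[D_\ep/\Zp]$, and $C$ is a (possibly empty) closed $\T$-invariant nodal curve carrying the $l$ interior marked points, with $D_j$ attached to $C$ at a node over the fixed point $\fp_0$ (the image of $z=0$ under $t\mapsto[t^{s/p},t^{r/p},0]$). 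First I would record that the invariant orbidisk of winding $\mu_j$ is rigid and unique, which is exactly the last bullet of the list in Section~\ref{sec:conifold transition}. Consequently the disk part of the fixed locus carries no moduli, and after forgetting the disks the fixed locus is identified, up to the relevant finite automorphisms, with the $(\T)_\bR$-fixed locus of $\overline{\cM}_{g,n+l}(\X,d)$, the $n$ extra marked points recording the nodes.

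Next I would factor the virtual normal bundle $N^\vir$ into three pieces: the deformation-obstruction of each rigid disk, the smoothing of each disk--$C$ node, and the normal bundle of the closed part inside $\overline{\cM}_{g,n+l}(\X,d)^{(\T)_\bR}$. The node-smoothing factor at the $j$-th node contributes $\frac{\sv}{\mu_j}\big(\frac{\sv}{\mu_j}-\hat{\psi}_{l+j}\big)$, where $\frac{\sv}{\mu_j}$ is the $\T$-weight of the deformation of the node on the disk side and $\hat{\psi}_{l+j}$ is the descendant class on the $C$ side; this is precisely the denominator on the right-hand side. The disk deformation-obstruction, computed from the $\T$-weights $w_1\sv,w_2\sv,w_3\sv$ of $T_{\fp_0}\X$ via the normalization sequence for the pulled-back bundles on the orbidisk, yields the disk factor $D'(\mu_j)$: the factorials $\lfloor\frac{\mu_j r}{p}\rfloor!$ and $\lfloor\frac{\mu_j s}{p}\rfloor!$ come from the moving part of $H^0$ along the two relevant axes, the power $(\frac{\sv}{\mu_j})^{1-\delta_{\langle \mu_j r/p\rangle,0}}$ from the age-shifted node evaluation, and the prefactor $\frac{p}{\mu_j}$ from the orbifold automorphisms of the disk. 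Finally, the monodromy of the orbidisk at the node forces the node evaluation on the $C$ side to land in the twisted sector whose age is governed by $\langle\frac{\mu_j r}{p}\rangle$; tracking the conjugation convention shows the relevant sector is indexed by $\langle-\frac{\mu_j r}{p}\rangle$, which produces the insertion $\ev^*_{l+j}\rho_{0,\langle -\frac{\mu_j r}{p}\rangle}$ with $\rho_{0,h}=\iota_{r,s}^*\iota_{0*}\one_h$.

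Having isolated $\prod_j D'(\mu_j)$ together with the node-smoothing and evaluation data, the remaining localization sum runs over the $(\T)_\bR$-fixed loci of $\overline{\cM}_{g,n+l}(\X,d)$, each weighted by the integrand $\frac{\prod_{i=1}^l\ev_i^*\gamma_i\,\prod_{j=1}^n\ev^*_{l+j}\rho_{0,\langle-\frac{\mu_j r}{p}\rangle}}{\prod_{j=1}^n\frac{\sv}{\mu_j}(\frac{\sv}{\mu_j}-\hat{\psi}_{l+j})}$ divided by the equivariant Euler class of the normal bundle of that fixed locus. Applying the virtual localization theorem in its orbifold form (as used in \cite{FLT22}) in reverse, this sum is exactly the $\T$-equivariant integral $\int_{[\overline{\cM}_{g,n+l}(\X,d)]^\vir}$ of that integrand, giving the right-hand side of the claimed identity.

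I expect the main obstacle to be the explicit computation of the disk factor $D'(\mu)$ and the accompanying orbifold bookkeeping: one must determine the cyclic orbifold order at the center of $D_j$ from the winding number $\mu_j$, evaluate the moving parts of $H^0$ and $H^1$ of the normal bundle of $[D_\ep/\Zp]$ with the correct $\T$-weights and age shifts, and verify that the automorphism factors together with the twisted-sector index $\langle-\frac{\mu_j r}{p}\rangle$ combine to reproduce \eqref{eqn:diskfactor} precisely. The reverse-localization step, while conceptually clean, also requires checking that the age and twisted-sector conventions on the open side agree with those entering the definition of $\rho_{0,h}$, so that the two sides match sector by sector.
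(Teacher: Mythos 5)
Your proposal follows exactly the route the paper takes: the paper gives no independent argument but simply invokes ``the same proof as \cite[Proposition 3.3]{FLT22}'', i.e.\ virtual localization with the fixed locus decomposed into rigid $\T$-invariant orbidisks of winding $\mu_j$ attached at nodes to a closed $\T$-fixed curve, the disk factor $D'(\mu_j)$ and node-smoothing denominator $\frac{\sv}{\mu_j}(\frac{\sv}{\mu_j}-\hat{\psi}_{l+j})$ extracted per disk, and the remaining sum reassembled by reverse localization into the integral over $[\overline{\cM}_{g,n+l}(\X,d)]^{\vir}$ with the twisted-sector insertions $\rho_{0,\langle-\frac{\mu_j r}{p}\rangle}$. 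Your reconstruction, including the bookkeeping caveats you flag, is a faithful account of that argument, so it is correct and essentially identical to the paper's proof.
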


\noindent
We introduce some notation.
\begin{itemize}
    \item [(1)] Let $\cI$ be the set of fraction to index $G_0=G_{\si_0}$ in Section \ref{section-J-function}. Given $h\in\cI$, define
    \begin{equation*}
        \begin{aligned}
            \Phi_0^h(X) &:= \frac{1}{p}\sum_{\mu> 0\atop \AlphaMu=h} D'(\mu)\left(\frac{\mu}{\sv}\right)^{2}X^\mu
            \\
            &= \sum_{\mu> 0\atop \AlphaMu=h} \frac{1}{\mu}\left(\frac{\sv}{\mu}\right)^{-1-\delta_{h,0}}\frac{\prod_{j=1}^{\mu k-1}(-\mu k+j)}
            {\lfloor\frac{\mu r}{p}\rfloor!\lfloor\frac{\mu s}{p}\rfloor!}X^\mu
        \end{aligned}
    \end{equation*}
    For $a\in\bZ$ and $h\in\cI$, we define
    \[
        \Phi_a^h(X) := \frac{1}{p}\sum_{\mu> 0\atop \AlphaMu=h} D'(\mu)\left(\frac{\mu}{\sv}\right)^{a+2}X^\mu.
    \]
    i.e.
    \[
        \Phi_{a+1}^h(X) = (\frac{1}{\sv}X\frac{d}{dX})\Phi_a^h(X).
    \]
    \item[(2)] For $a\in\bZ$ and $\alpha\in G_0^*$, we define
    \[
        \widetilde{\xi}^\alpha_a(X) := p\sum_{[h]\in G_0} \chi_\alpha(-[h])(\prod_{i=1}^{3}(w_i\sv)^{1-c_i([h])})\Phi_a^{\langle\frac{h}{p}\rangle}(X)
    \]
    where $c_1([h]) = \langle\frac{h}{p}\rangle$, $c_2(h) = 0$, $c_3(h)=1-\langle\frac{h}{p}\rangle - \delta_{0,\langle \frac{h}{p}\rangle}$. Furthermore, we define
    \[
        \widetilde{\xi}^\alpha(z,X) := \sum_{a\in\bZ_{\geq -2}}z^a\widetilde{\xi}^\alpha_a(X).
    \]
\end{itemize}

With the above notation, by \cite[Proposition 3.13]{FLZ16} we have:
\begin{prop}\label{prop:a-potential}\rm
    \begin{enumerate}
        \item [(1)](disk invariants)
        \begin{equation*}
            F^{\X,\Lrs}_{0,1}(\btau, \fQ; X) = [z^{-2}] \sum_{\alpha\in G_0^*}S_z(1,\phi_{0,\alpha})\widetilde{\xi}^\alpha(z,X)\big|_{t=\btau\atop\sw_i=w_i\sv}.
        \end{equation*}
        \item[(2)](annulus invariants)
        \begin{equation*}
            \begin{aligned}
                &\quad\quad F^{\X,\Lrs}_{0,2}(\btau, \fQ; X_1, X_2) - F^{\X,\Lrs}_{0,2}(0; X_1, X_2)
                \\
                &=[z_1^{-1}z_2^{-1}]\sum_{\alpha_1,\alpha_2\in G_0^*}\ll\frac{\phi_{0,\alpha_1}}{z_1-\hat{\psi_1}},\frac{\phi_{0,\alpha_2}}{z_2-\hat{\psi_2}}\gg_{0,2}^{\X,\T}\big|_{t=\btau}\widetilde{\xi}^{\alpha_1}(z_1,X_1)\widetilde{\xi}^{\alpha_2}(z_2,X_2)
            \end{aligned}
        \end{equation*}
        where
        \begin{equation*}
            \begin{aligned}
            (X_1\frac{\partial}{\partial X_1}+X_2\frac{\partial}{\partial X_2})F^{\X,\Lrs}_{0,2}(0; X_1, X_2)
            \\  = \frac{1}{p^2w_1w_2w_3}\bigg(\sum_{\gamma\in G_0^*}(
                \widetilde{\xi}_0^\gamma(X_1)\widetilde{\xi}_0^\gamma(X_2)
            )\bigg)\big|_{\sv=1}
            \end{aligned}
        \end{equation*}
        \item[(3)] For $2g-2+n>0$
        \begin{equation*}
            \begin{aligned}
                &F^{\X,\Lrs}_{g,n}(\btau, \fQ; X_1,\dots, X_n)
                \\
                = \ &[z_1^{-1}\dots z_n^{-1}]\sum_{\alpha_1,\dots,\alpha_n\in G_0^*}\left( \ll\frac{\phi_{0,\alpha_1}}{z_1-\hat{\psi}_1},
                \frac{\phi_{0,\alpha_2}}{z_2-\hat{\psi}_2},\dots, \frac{\phi_{0,\alpha_n}}{z_n-\hat{\psi}_n} \gg^{\cX,\T}_{g,n}\bigg|_{t=\btau}\right)\prod_{j=1}^{n}\widetilde{\xi}^{\alpha_j}(z_j,X_j).
            \end{aligned}
        \end{equation*}
    \end{enumerate}

\end{prop}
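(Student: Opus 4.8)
The plan is to obtain all three formulas by reorganizing the localization identity of Theorem~\ref{thm:localization}, following the orbifold strategy of \cite[Prop.~3.13]{FLZ16}. That theorem already rewrites each open invariant $\GWgl{\gamma_1,\dots,\gamma_l}$ as a closed descendent integral over $\Mbar_{g,n+l}(\X,d)$ carrying, at each of the $n$ gluing markings, a disk factor $D'(\mu_j)$, a twisted-sector insertion $\rho_{0,\langle-\frac{\mu_j r}{p}\rangle}$, and the node propagator $\big(\frac{\sv}{\mu_j}(\frac{\sv}{\mu_j}-\hat{\psi}_{l+j})\big)^{-1}$. Forming the generating function $F^{\X,\Lrs}_{g,n}$ of \eqref{eqn:open-potential} then reduces to summing this identity over $l,d$ and the windings $\mu_1,\dots,\mu_n>0$ against $\fQ^d\prod_j X_j^{\mu_j}/l!$, so the entire argument is bookkeeping that repackages the disk data into $\widetilde{\xi}^\alpha(z,X)$ and the closed data into the double bracket.

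First I would expand the propagator geometrically,
\[
\frac{1}{\frac{\sv}{\mu}\big(\frac{\sv}{\mu}-\hat{\psi}\big)}=\sum_{a\geq 0}\Big(\frac{\mu}{\sv}\Big)^{a+2}\hat{\psi}^{a},
\]
which decouples the winding sum marking by marking. Summing $D'(\mu)\big(\frac{\mu}{\sv}\big)^{a+2}X^{\mu}$ over all $\mu$ with fixed $\langle\frac{\mu r}{p}\rangle=h$ produces exactly $p\,\Phi_a^h(X)$. Next I would perform the discrete Fourier transform over $G_0\cong\Zp$: the character weights $\chi_\alpha(-[h])$ together with the age-shift factors $\prod_{i=1}^3(w_i\sv)^{1-c_i([h])}$ convert the group-element insertions $\rho_{0,\langle-\frac{\mu r}{p}\rangle}$ into the canonical idempotent basis $\{\phi_{0,\alpha}\}_{\alpha\in G_0^*}$, which is precisely the content of the definition of $\widetilde{\xi}^\alpha_a(X)$ and hence of $\widetilde{\xi}^\alpha(z,X)=\sum_{a\geq -2}z^a\widetilde{\xi}^\alpha_a(X)$. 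The surviving $\hat{\psi}^a$ is carried into the closed correlator, where pairing it against $z^{-a-1}$ in $\frac{\phi_{0,\alpha}}{z-\hat{\psi}}=\sum_{i\geq 0}\phi_{0,\alpha}\hat{\psi}^i z^{-i-1}$ and extracting $[z_1^{-1}\cdots z_n^{-1}]$ matches a closed descendent correlator at each boundary against the coefficient $\widetilde{\xi}^{\alpha}_a(X)$. For $2g-2+n>0$ the closed correlators assemble directly into $\ll\cdots\gg^{\cX,\T}_{g,n}$, which is part~(3).

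The two unstable cases require the $\cS$-operator formalism of Section~\ref{sec:A-S}. For the disk $(g,n)=(0,1)$ there is no stable closed correlator; the single gluing marking is absorbed by $\cS$, so the closed part collapses to $(1,\cS(\phi_{0,\alpha}))=S_z(1,\phi_{0,\alpha})$, and since $S_z$ begins at order $z^0$ while $\widetilde{\xi}^\alpha$ begins at $z^{-2}$, the correct selection is the $[z^{-2}]$ extraction of part~(1). For the annulus $(0,2)$ the stable bracket $\ll\cdots\gg_{0,2}$ only records the maps with insertions, so I would instead use the two-point function $V_{z_1,z_2}$ and split off its degree-zero, constant-map contribution as the classical annulus term $F^{\X,\Lrs}_{0,2}(0;X_1,X_2)$; the stated closed form of this term follows from identity~\eqref{eqn:two-in-one}, which expresses $V_{z_1,z_2}$ through $S_{z_1}\cdot S_{z_2}$ and thereby yields the factor $\frac{1}{p^2w_1w_2w_3}\sum_{\gamma}\widetilde{\xi}_0^\gamma(X_1)\widetilde{\xi}_0^\gamma(X_2)\big|_{\sv=1}$.

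The main obstacle is the orbifold change of basis in the second step: one must check that summing $\rho_{0,\langle-\frac{\mu r}{p}\rangle}$ against the characters reproduces the idempotents $\phi_{0,\alpha}$ with the exact normalization, tracking the ages $c_i(h)$, the factors $(w_i\sv)^{1-c_i(h)}$, and the exponent $1-\delta_{\langle\frac{\mu r}{p}\rangle,0}$ hidden in $D'(\mu)$ in lockstep. The second delicate point is the unstable annulus: because the degree-zero constant-map term is not generated by the descendent recursion, it must be isolated and matched by hand, and one has to confirm that the pole of $V_{z_1,z_2}$ along $z_1+z_2=0$ is accounted for correctly so that only the stated classical piece remains.
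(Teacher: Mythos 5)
Your proposal is correct and is essentially the paper's own argument: the paper establishes Theorem~\ref{thm:localization} and then obtains Proposition~\ref{prop:a-potential} by invoking \cite[Proposition 3.13]{FLZ16}, whose proof proceeds exactly as you describe — geometric expansion of the node propagator, discrete Fourier transform into the idempotent basis $\{\phi_{0,\alpha}\}$ that absorbs the disk factors $D'(\mu)$ into $\widetilde{\xi}^\alpha(z,X)$, and the $\cS$-operator / $V_{z_1,z_2}$ formalism for the unstable disk and annulus cases. The two delicate points you flag (the normalization in the basis change and the isolation of the degree-zero annulus term via the $z_1+z_2$ pole and unitarity of $\cS$) are precisely the ones handled in that cited proof, so your plan matches the paper's route step for step.
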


To obtain the graph sum formula for $F^{\X,\Lrs}_{g,n}$, we introduce the notation
\begin{equation*}
    \widetilde{\xi}^{\bsi}(z,X) = \left\{
    \begin{aligned}
        \widetilde{\xi}^\alpha(z,X), \quad &\text{if } \bsi = (0,\alpha),
        \\
        0, \spa\spa\spa\spa\spa &\text{if } \bsi = (1,\alpha).
    \end{aligned}
    \right.
\end{equation*}
\begin{itemize}
    \item [$\bullet$] Given a labeled graph $\vec{\Gamma}\in\bGa_{g,n}(\X)$, to each ordinary leaf $l_j\in L^o(\Gamma)$ with $\bsi(l_j)=\bsi\in I_\X$ and $k(l_j)\in\bZ_{\geq 0}$
    we assign the following weight (open leaf)
    \[
        (\widetilde{\cL}^O)^\bsi_k(l_j) = [z^k] \left(\sum_{\bsi',\bsi''\in I_\cX}
        \left(\widetilde{\xi}^{\bsi'}(z,X_j)
        S^{\widehat{\underline{\bsi''}} }_{\spa
          \bsi'}(z)\left|_{t=\btau\atop \sw_i=w_i\sv}\right)_+ R(-z)_{\bsi''}^{\spa \bsi} \right|_{t=\btau\atop \sw_i=w_i\sv}\right).
    \]
    \item[$\bullet$] Given a labeled graph $\bGa_{g,n}(\X)$, we define a weight
    \begin{eqnarray*}
\widetilde{w}_A^{X}(\vGa) &=& \prod_{v\in V(\Ga)} \Bigl(\sqrt{\Delta^{\bsi(v)}(t)}\Bigr)^{2g(v)-2+\val(v)} \langle \prod_{h\in H(v)} \tau_{k(h)}\rangle_{g(v)}
\\
&&\cdot \left(\prod_{e\in E(\Ga)} \cE^{\bsi(v_1(e)),\bsi(v_2(e))}_{k(h_1(e)),k(h_2(e))}\prod_{l\in L^1(\Ga)}(\cL^1)^{\bsi(l)}_{k(l)}\right)\left|_{t=\btau\atop\sw_i=w_i\sv}\right.\prod_{j=1}^n(\widetilde{\cL}^{O})^{\bsi(l_j)}_{k(l_j)}(l_j).
\end{eqnarray*}
\end{itemize}

Following Theorem \ref{thm:Zong} and Proposition \ref{prop:a-potential}, we have the following theorem
\begin{theorem}\label{thm:a graph sum}
    \[
        F^{\X,\Lrs}_{g,n}(\btau, \fQ; X_1,\dots, X_n) = \sum_{\vGa\in \bGa_{g,n}(\cX)}\frac{\widetilde{w}_A^{X}(\vGa)}{|\Aut(\vGa)|}.
    \]
\end{theorem}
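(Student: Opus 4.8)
The plan is to derive Theorem~\ref{thm:a graph sum} by substituting the closed-string graph sum of Theorem~\ref{thm:Zong} into the open--closed reduction of Proposition~\ref{prop:a-potential}. I would work throughout in the stable range $2g-2+n>0$, where Proposition~\ref{prop:a-potential}(3) applies; the disk and annulus potentials $F^{\X,\Lrs}_{0,1}$ and $F^{\X,\Lrs}_{0,2}$ are governed separately by parts (1) and (2) of that proposition and never enter the graph sum. The whole argument is genus- and valence-independent: all graph ingredients except the ordinary leaves are inherited verbatim, so the content is a single leaf-weight identity.

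First I would invoke Proposition~\ref{prop:a-potential}(3), which writes
\[
F^{\X,\Lrs}_{g,n}=[z_1^{-1}\cdots z_n^{-1}]\sum_{\alpha_1,\dots,\alpha_n\in G_0^*}\Big(\ll\tfrac{\phi_{0,\alpha_1}}{z_1-\hat\psi_1},\dots,\tfrac{\phi_{0,\alpha_n}}{z_n-\hat\psi_n}\gg^{\cX,\T}_{g,n}\big|_{t=\btau}\Big)\prod_{j=1}^n\widetilde{\xi}^{\alpha_j}(z_j,X_j),
\]
the inner correlator being the $\T$-restriction (via $\su_1=r\sv,\ \su_2=-k\sv$) of its $\bT$-equivariant version. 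The crucial observation is that, for a fixed external parameter $z_j$, the insertion $\tfrac{\phi_{0,\alpha_j}}{z_j-\hat\psi_j}$ is the evaluation at $\hat\psi_j$ of the genuine power series $\bu_j(z)=\tfrac{\phi_{0,\alpha_j}}{z_j-z}=\sum_{a\ge0}\phi_{0,\alpha_j}\,z_j^{-a-1}z^a$ in the descendent variable $z$. Hence Theorem~\ref{thm:Zong} applies with this $\bu_j$. The vertex factors $(\sqrt{\Delta^{\bsi(v)}(t)})^{2g(v)-2+\val(v)}\langle\prod\tau_{k(h)}\rangle_{g(v)}$, the edge factors $\cE^{\bsi,\bsi'}_{k,l}$, and the dilaton-leaf factors $(\cL^1)^\bsi_k$ are precisely those occurring in $\widetilde{w}_A^X(\vGa)$ after the specialization $t=\btau$, $\sw_i=w_i\sv$, so these ingredients and the automorphism factors $|\Aut(\vGa)|$ match on the nose. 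For the ordinary leaf, $\bST$-linearity of $\cS$ and the fact that $\bu_j^{\bsi'}(z)/\sqrt{\Delta^{\bsi'}(t)}$ is by definition the coefficient of $\hat\phi_{\bsi'}(t)$ in $\bu_j(z)$ collapse the $\bsi'$-sum: $\sum_{\bsi'}\tfrac{\bu_j^{\bsi'}(z)}{\sqrt{\Delta^{\bsi'}(t)}}S^{\widehat{\underline{\bsi''}}}_{\spa\widehat{\underline{\bsi'}}}(z)=(\hat\phi_{\bsi''}(t),\cS(\bu_j(z)))=\tfrac{1}{z_j-z}S^{\widehat{\underline{\bsi''}}}_{\spa(0,\alpha_j)}(z)$, where the \emph{classical} $\cS$-matrix appears because $\phi_{0,\alpha_j}$ is a classical canonical class.

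The key step is then to carry out the three operations attached to the $j$-th boundary—multiplication by $\widetilde{\xi}^{\alpha_j}(z_j,X_j)$, summation over $\alpha_j$, and extraction of $[z_j^{-1}]$—on the leaf weight produced above. Writing $S(z)=S^{\widehat{\underline{\bsi''}}}_{\spa(0,\alpha_j)}(z)$ (which carries only nonpositive powers of $z$) and expanding $\tfrac{1}{z_j-z}=\sum_{m\ge0}z^m z_j^{-m-1}$, I would check by comparing coefficients that
\[
[z_j^{-1}]\ \widetilde{\xi}^{\alpha_j}(z_j,X_j)\Big(\tfrac{1}{z_j-z}\,S(z)\Big)_+=\Big(\widetilde{\xi}^{\alpha_j}(z,X_j)\,S(z)\Big)_+,
\]
since both sides equal $\sum_{a\ge-2,\ b\le0,\ a+b\ge0}\widetilde{\xi}^{\alpha_j}_a(X_j)\,[z^b]S(z)\,z^{a+b}$. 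Summing over $\alpha_j$ (equivalently over $\bsi'=(0,\alpha_j)$, using $\widetilde{\xi}^{\bsi'}=0$ on the $\si=1$ chart) and then applying $[z^k]$ together with the edge factor $R(-z)^{\spa\bsi}_{\bsi''}$ reproduces exactly $(\widetilde{\cL}^O)^\bsi_k(l_j)$. Thus $w_A^{\bu}(\vGa)$ becomes $\widetilde{w}_A^X(\vGa)$ termwise, and summing over $\vGa\in\bGa_{g,n}(\cX)$ gives the theorem.

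The main difficulty will be precisely the displayed identity: one must confirm that the outer residue $[z_j^{-1}]$ in the external variable, the inner truncation $(\,\cdot\,)_+$ in the descendent variable $z$, and the geometric coupling $\tfrac{1}{z_j-z}$ interlock so that the negative $z$-powers $z^{-2},z^{-1}$ of the disk function $\widetilde{\xi}^{\alpha_j}$ are correctly retained inside $(\widetilde{\xi}^{\alpha_j}S)_+$ rather than being swallowed by the residue. This is the only place where the open data $\widetilde{\xi}$ and the boundary variable $X_j$ enter the computation, and once it is verified everything else follows formally from Theorem~\ref{thm:Zong}.
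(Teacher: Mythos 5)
Your proposal is correct and follows essentially the same route as the paper, whose entire proof is the one-line combination of Theorem \ref{thm:Zong} with Proposition \ref{prop:a-potential}(3); your argument simply fills in the leaf-matching details, and your key displayed identity is valid. Note that the difficulty you flag at the end is in fact a non-issue: since $S(z)$ carries only nonpositive powers of $z$, the constraints $b\le 0$ and $a+b\ge 0$ in your common sum force $a\ge 0$, so the negative powers $z^{-2},z^{-1}$ of $\widetilde{\xi}^{\alpha_j}$ contribute to neither side, and both sides reduce to $\sum_{a\ge 0,\, b\le 0,\, a+b\ge 0}\widetilde{\xi}^{\alpha_j}_a(X_j)\,[z^b]S(z)\,z^{a+b}$.
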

\noindent
As discussed in \cite[Remark 3.6]{FLZ16}, the function
\[
    \GWPgn{\empty}{g}{n} := F^{\X,\Lrs}_{g,n}(\btau, 1; X_1,\dots, X_n)
\]
is well-defined. Theorem \ref{thm:a graph sum} implies
\begin{coro} \label{eqn:graph-open}
    Let $w_A^{X}(\vGa) = \widetilde{w}_A^{X}(\vGa)|_{\fQ=1}$,
    \[
        \GWPgn{\empty}{g}{n} =   \sum_{\vGa\in \bGa_{g,n}(\cX)}\frac{w_A^{X}(\vGa)}{|\Aut(\vGa)|}.
    \]
\end{coro}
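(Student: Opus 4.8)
The plan is to obtain this Corollary as nothing more than the specialization $\fQ=1$ of Theorem \ref{thm:a graph sum}; the only genuine content is that this specialization is meaningful, after which both sides are matched by unwinding definitions.

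First I would fix a tuple of winding numbers $(\mu_1,\dots,\mu_n)$ and extract the coefficient of $X_1^{\mu_1}\cdots X_n^{\mu_n}$ from the open potential $F^{\X,\Lrs}_{g,n}(\btau,\fQ;X_1,\dots,X_n)$ of \eqref{eqn:open-potential}. Using the geometry of $(\X,\Lrs)$, namely the single exceptional $\bP^1$-class together with the fact that a stable map with boundary winding $\vec{\mu}$ has bounded effective interior degree $d$, I would argue, following \cite[Remark 3.6]{FLZ16}, that for fixed $\vec{\mu}$ only finitely many $d\in E(\X)$ contribute to the sum \eqref{eqn:open-potential}. Consequently this coefficient is a \emph{polynomial} in $\fQ$ (with coefficients rational in the equivariant parameter $\sv$), so that setting $\fQ=1$ is well-defined coefficientwise in $X_1,\dots,X_n$. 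This is precisely the assertion that $\GWPgn{\empty}{g}{n}=F^{\X,\Lrs}_{g,n}(\btau,1;X_1,\dots,X_n)$ is well-defined as a power series in the $X_j$.

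With well-definedness secured, I would substitute $\fQ=1$ into both sides of the identity in Theorem \ref{thm:a graph sum}. On the left this is the definition of $\GWPgn{\empty}{g}{n}$. On the right, the automorphism factor $|\Aut(\vGa)|$ carries no $\fQ$-dependence, while every $\fQ$-dependent ingredient of $\widetilde{w}_A^{X}(\vGa)$ — the factors $\sqrt{\Delta^{\bsi}(t)}$, the edge and dilaton contributions built from $R(z)$, and the open-leaf factors built from the $\cS$-operator matrix — enters only through closed descendent data already evaluated at $t=\btau$; each such coefficient is a polynomial in $\fQ$ by the equality of the two sides just established. Hence, by the definition $w_A^{X}(\vGa)=\widetilde{w}_A^{X}(\vGa)|_{\fQ=1}$, the whole weight specializes termwise, yielding the claimed graph-sum formula for $\GWPgn{\empty}{g}{n}$.

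I expect the main obstacle to be the finiteness/polynomiality claim of the first step: one must confirm that for fixed boundary winding the sum over interior degrees $d$ in \eqref{eqn:open-potential} is finite, equivalently that the $\fQ$-series has polynomial coefficients in each $X_j$, so that evaluation at $\fQ=1$ is a legitimate operation rather than a formal one. Once this is in hand, the identification of the two sides is purely a matter of inserting $\fQ=1$ into the definitions and requires no further analytic input.
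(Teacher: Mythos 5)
Your high-level route is the same as the paper's: the corollary is nothing but Theorem \ref{thm:a graph sum} specialized at $\fQ=1$, and the only genuine content is that this specialization is well-defined (the paper disposes of this by citing \cite[Remark 3.6]{FLZ16}). The problem is that your justification of that one substantive step is wrong. It is not true that ``a stable map with boundary winding $\vec{\mu}$ has bounded effective interior degree'': for every $d\in E(\X)\cong\bZ_{\geq 0}$ there are stable maps (and $\T$-fixed loci) of class $d+(\sum_j\mu_j)b$, obtained by attaching to a winding-$\vec{\mu}$ cover of the invariant disk a closed component covering the exceptional $\bP^1$ with degree $d$; the moduli spaces $\Mgldmu$ are nonempty for all $d$. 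Hence the coefficient of $X_1^{\mu_1}\cdots X_n^{\mu_n}$ in \eqref{eqn:open-potential} is a priori an infinite $\fQ$-series, not a polynomial. (For this geometry the invariants do ultimately vanish for $d$ large relative to $|\vec{\mu}|$, but that is a consequence of the mirror computation carried out later in the paper; invoking it here to make the corollary meaningful would be circular.) Moreover, \cite[Remark 3.6]{FLZ16} does not assert any such finiteness.

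What that remark actually uses, and what makes the paper's one-line proof work, is the divisor equation: as set up in Section \ref{sec:QH}, every generating function in play depends on $\fQ$ and the degree-two parameter $\btau'$ only through the combination $\tfQ^d=\fQ^d\exp(\langle d,\btau'\rangle)$, and likewise the graph-sum ingredients (the $S$- and $R$-matrices, $\Delta^{\bsi}(t)$, the dilaton and edge weights) lie in $\bST[\![\tfQ,\tau'']\!]$. Setting $\fQ=1$ is therefore not the evaluation of an infinite numerical series at a point but the substitution $\tfQ\mapsto\exp(\langle d,\btau'\rangle)$, which returns well-defined formal power series; both sides of Theorem \ref{thm:a graph sum} specialize simultaneously and the corollary follows. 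This also repairs the mild circularity in your second paragraph: you deduce that the right-hand side specializes termwise from polynomiality of the left-hand side ``by the equality of the two sides,'' whereas the specialization of each graph-sum weight is immediate from its being a $\tfQ$-series by construction, independently of the identity being specialized.
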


\noindent
Let $\btau_2\in\HcrX{2}$ and apply Theorem \ref{thm:localization}, the disk potential is
\begin{equation*}
    \begin{aligned}
    & \GWPol{2} = \sum_{l\geq 0}\sum_{d\geq 0}\sum_{\mu>0}
    \frac{\GWol{\btau_2^l}}{l!}X^{\mu}
\\
    =\ &\frac{1}{p}\sum_{l\geq 0}\sum_{d\geq 0}\sum_{\mu>0}
    \frac{1}{l!}\DescdentGwol{\btau_2^l,\frac{\one_{0,\AlphaMu}^*}{\frac{\sv}{\mu}(\frac{\sv}{\mu}-\hat{\psi})}}{l+1}D'(\mu)X^{\mu}
\\
    =\ &\frac{1}{p}\sum_{l\geq 0}\sum_{d\geq 0}\sum_{\mu>0}
    \frac{1}{l!}\DescdentGwol{1, \btau_2^l,\frac{\one_{0,\AlphaMu}^*}{\frac{\sv}{\mu}-\hat{\psi}}}{l+2}D'(\mu)X^{\mu}
    \end{aligned}
\end{equation*}

Hence,
\[
    \GWPol{2} = \frac{1}{p}\sum_{h\in\cI}\sum_{\mu>0 \atop\AlphaMu=h}
    J_{0,h}(\btau_2,\frac{\sv}{\mu})D'(\mu)X^{\mu}.
\]
Applying the mirror theorem \ref{thm:mirror theorem},
\begin{equation}\label{eqn:disk-I-function}
    \begin{aligned}
        \GWPol{2&(q)} = \frac{1}{p}\sum_{h\in\cI}\sum_{\mu>0\atop\AlphaMu=h}D'(\mu)X^{\mu}
        \\
        &\cdot\sum_{\beta\in\mathbb{K}_\eff\atop \langle w(\beta)\rangle = h} \frac{q_1^{a_1}\dots q_p^{a_p}}{\left(\frac{\sv}{\mu}\right)^{1-\delta_{h,0}}}
        \frac{\prod_{m=-a_1+\lceil -w(\beta)\rceil}^{-1}(\frac{\mu r}{p}-(a_1+\wbeta+m))}{\prod_{m=0}^{a_1-1}(-\mu k+a_1 -m)}
        \\
        &\cdot\frac{
            \prod_{m=-a_1-a_2-\dots-a_p +\lceil w(\beta)\rceil}^{-1}(\frac{\mu s}{p}-(a_1+\dots+a_p+m)+\wbeta)}
        {a_1!a_2!\dots a_p!}
    \end{aligned}
\end{equation}

\subsection{Degree zero Open-closed Gromov-Witten invariants as relative Gromov-Witten invariants}
\label{sec:relative1}
The open-closed Gromov-Witten invariants can be interpreted in terms of relative Gromov-Witten invariants.
In this subsection, we express the degree 0 open-closed Gromov-Witten invariants
$\langle\gamma_1,\dots,\gamma_l\rangle_{g,l,0,\vec{\mu}}^{\X,\Lrs,\T}$ as certain relative Gromov-Witten invariants. In this case,
the curve class satisfies
$u_*[\Sigma] = (\sum_{j=1}^{n}\mu_i)b\in H_2(\cX,\Lrs)$.
\par
In order to compute the
$\T$-invariant holomorphic disk lying on the affine piece $\X_0\subset\X$, we consider the weighted projective
space
\[
    \fX = \bP(r,s,p) = (\bC^3\backslash\{0\})/\bC^*
\]
where the $\bC^*$-action on $\bC^3$ is
\[
    u\cdot(x_1,x_2,x_3) = (u^r x_1, u^s x_2, u^p x_3).
\]
We define $D_i=\{x_i = 0\}\subset \bP(r,s,p)$, $i=1,2,3$ as the $\bC^*$-equivariant divisors. In the Picard group
$\Pic(\fX)$ we have
\[
    \frac{[D_1]}{r} = \frac{[D_2]}{s} = \frac{[D_3]}{p}.
\]
The total space of $\cO_{\fX}(-D_1-D_2)$ is a partial compatification of the affine piece $\X_0= [\C^3/\Zp]$.
Let $d'\in\mathbb{N}^*$ and let $\vec{\mu}$ be a length $n$ partition of $d'$. Let $\vec{\gamma}=(\gamma_1,\cdots,\gamma_l)$ be a $l$-dimensional vector with $\gamma_j$'s nontrivial elements in $G_0\cong\bZ_p$.
Consider the moduli space $\Mbar_{g,l}(\fX,D_3,d',\vec{\mu})$ of stable maps relative the the divisor $D_3$ with ramification profile given by $\vec{\mu}$. Let
$$
\Mbar_{g,\vec{\gamma}}(\fX,D_3,d',\vec{\mu}):=\Mbar_{g,l}(\fX,D_3,d',\vec{\mu})\cap
\big(\bigcap_{i=1}^l\ev_i^*\one_{\gamma_i} \big),
$$
where $\ev_i$ is the evaluation map corresponding to the $i$-th non-relative marked point. The virtual dimension of $\Mbar_{g,\vec{\gamma}}(\fX,D_3,d',\vec{\mu})$ is equal to $d'k+n+g-1$, where $k\in\mathbb{N}$  is given by the identity $r+s=pk$.
Let
\[
    \pi: \cU_{g,\gamma,\vec{\mu}}\rightarrow\Mbar_{g,\vec{\gamma}}(\fX,D_3,d',\vec{\mu})
\]
be the universal curve and let
\[
    \cT\rightarrow\Mbar_{g,\vec{\gamma}}(\fX,D_3,d',\vec{\mu})
\]
be the universal target. There is an evaluation map
\[
    F:\cU_{g,\vec{\gamma},\vec{\mu}}\rightarrow \cT
\]
and a contraction map
\[
    \tilde{\pi}: \cT\rightarrow \fX.
\]
Let $\tilde{F}:= \tilde{\pi}\circ F: \cU_{g,\vec{\mu}}\rightarrow\fX$. Then we define the obstruction bundle $V_{g,\vec{\gamma},\vec{\mu}}$ as
\[
    V_{g,\vec{\gamma},\vec{\mu}} := R^1\pi_*\tilde{F}^*\cO(-D_1-D_2).
\]
The Riemann-Roch theorem tells that the rank of the obstruction bundle $V_{g,\vec{\gamma},\vec{\mu}}$ is $d'k+g-1$.
Let
\[
    N_{g,\vec{\gamma},\vec{\mu}} = \int_{[\Mbar_{g,\vec{\gamma}}(\fX,D_3,d',\vec{\mu})]^\vir} e(V_{g,\vec{\gamma},\vec{\mu}})\prod_{j=1}^{n}\ev^*_j[\text{pt}]
\]
where $[\pt]$ is the point class on $D_3$ and $\ev_j$ is the evaluation map corresponding to the $j$-th relative marked point. We have $N_{g,\vec{\gamma},\vec{\mu}}$ is a topological invariant.
\begin{remark}\rm\label{disk in compact X}
Let $D=[\{z\in\bC | |z|\leq 1\}/\Zp]$ be the orbidisk and let $t=z^p$. Consider the map
\[
    g: D\rightarrow\fX
\]
\[
    t\mapsto [t^{r/p},t^{s/p},1].
\]
The map $g$ can extend to 
\[
    \tilde{g}: \bP(p,1)\rightarrow \fX
\]
\[
    [x,y] \mapsto [x^{r/p}, x^{s/p}, y^p]
\]
where $\tilde{g}([1,0]) = [1,1,0] \in D_3$. Therefore, intuitively the relative Gromov-Witten invariant $N_{g,\vec{\gamma},\vec{\mu}}$ is equal to
the open-closed Gromov-Witten invariant $\langle\gamma_1,\dots,\gamma_l\rangle_{g,l,0,\vec{\mu}}^{\X,\Lrs,\T}$ in principle.
\end{remark}
The relative Gromov-Witten invariant $N_{g,\vec{\gamma},\vec{\mu}}$ can be computed by the virtual localization formula. Consider the embedded 2-torus $\bT\subset\fX$. The 2-torus $\bT$ extends to a $\bT$-action on $\fX$, and
it can be lifted to the line bundle $\cO(-D_1-D_2)$. Let $p_1=[1,0,0]$, $p_2=[0,1,0]$, $p_3=[0,0,1]$, then the weights of the $\bT$-action at $p_1, p_2, p_3$ are given by
\begin{equation*}
    \begin{array}{ccc}
            & T_\fX                                     & \cO(-D_1-D_2)\\
        p_1 & -\frac{\su_1}{r}, -\frac{k}{r}\su_1-\su_2 & \frac{k}{r}\su_1+\su_2\\
        p_2 & \frac{\su_1}{s}+\frac{p\su_2}{s}, \frac{k\su_1}{s}+\frac{r\su_2}{s} &  -\frac{k\su_1}{s}-\frac{r\su_2}{s}  \\
        p_3 & \frac{\su_1}{p}, -\frac{\su_1}{p}-\su_2 & \su_2
    \end{array}
\end{equation*}
Let $\su_1=r\sv$, $\su_2=-k\sv$, where $\sv$ is the equivariant parameter of the subtorus $\T\cong \bC^*\subset\bT$. Then the weight of $\T$-action at $p_1, p_2, p_3$
are given by
\begin{equation*}
    \begin{array}{ccc}
            & T_\fX   & \cO(-D_1-D_2)\\
        p_1 & -\sv, 0 & 0\\
        p_2 & -\sv, 0 & 0\\
        p_3 & \frac{r\sv}{p}, \frac{s\sv}{p} & -k\sv
    \end{array}
\end{equation*}
The subtorus $\T$ preserves the projective line in Remark \ref*{disk in compact X}. Moreover, since the $\T$-action along $D_3$ is trivial
and it is also trivial on $\cO(-D_1-D_2)|_{D_3}$, we just need to consider the non-bubble component in $\Mbar_{g,\vec{\gamma}}(\fX,D_3,d',\vec{\mu})$. 
The tangent obstruction sequence in this case is
\begin{alignat*}{4}
    0 & \to \text{Aut}(C,x,y) & \to H^0(C, f^*T\mathfrak{X}(-\log D_3)) & \to \mathcal{T}^1 &\\
     & \to \text{Def}(C,x,y) & \to  H^1(C, f^*T\mathfrak{X}(-\log D_3)) & \to \mathcal{T}^2 &\to 0 ,
\end{alignat*}
where $C$ is the domain curve, $x=(x_1,\cdots,x_l)$ are the non-relative marked points, and $y=(y_1,\cdots,y_n)$ are the relative marked points. The relative Gromov-Witten invariant $N_{g,\vec{\gamma},\vec{\mu}}$ is given by
\[
    N_{g,\vec{\gamma},\vec{\mu}} = \frac{1}{\prod_{j=1}^{n}\mu_j} \frac{e_\bT(V_{g,\vec{\gamma},\vec{\mu}})e_\bT(\mathcal{T}^{2,m})}{e_\bT(\mathcal{T}^{1,m})}
    \Big|_{\su_1=r\sv,\su_2=-k\sv}
\]

\begin{figure*}[h]

\begin{center}
\tikzset{every picture/.style={line width=0.75pt}} 

\begin{tikzpicture}[x=0.75pt,y=0.75pt,yscale=-1,xscale=1]

\draw    (237.36,193.36) -- (409,193.36) ;
\draw    (432.36,63.36) -- (409,193.36) ;
\draw    (237.36,193.36) -- (481.09,30.96) ;
\draw [shift={(482.75,29.85)}, rotate = 146.33] [color={rgb, 255:red, 0; green, 0; blue, 0 }  ][line width=0.75]    (10.93,-3.29) .. controls (6.95,-1.4) and (3.31,-0.3) .. (0,0) .. controls (3.31,0.3) and (6.95,1.4) .. (10.93,3.29)   ;
\draw    (432.36,63.36) -- (194.76,221.77) ;
\draw [shift={(193.1,222.87)}, rotate = 326.31] [color={rgb, 255:red, 0; green, 0; blue, 0 }  ][line width=0.75]    (10.93,-3.29) .. controls (6.95,-1.4) and (3.31,-0.3) .. (0,0) .. controls (3.31,0.3) and (6.95,1.4) .. (10.93,3.29)   ;
\draw    (409,193.36) -- (424.9,226.43) ;
\draw [shift={(425.76,228.23)}, rotate = 244.33] [color={rgb, 255:red, 0; green, 0; blue, 0 }  ][line width=0.75]    (10.93,-3.29) .. controls (6.95,-1.4) and (3.31,-0.3) .. (0,0) .. controls (3.31,0.3) and (6.95,1.4) .. (10.93,3.29)   ;
\draw    (237.36,193.36) -- (276.43,193.56) ;
\draw [shift={(278.43,193.57)}, rotate = 180.29] [color={rgb, 255:red, 0; green, 0; blue, 0 }  ][line width=0.75]    (10.93,-3.29) .. controls (6.95,-1.4) and (3.31,-0.3) .. (0,0) .. controls (3.31,0.3) and (6.95,1.4) .. (10.93,3.29)   ;
\draw    (409,193.36) -- (414.96,158.98) ;
\draw [shift={(415.3,157.01)}, rotate = 99.83] [color={rgb, 255:red, 0; green, 0; blue, 0 }  ][line width=0.75]    (10.93,-3.29) .. controls (6.95,-1.4) and (3.31,-0.3) .. (0,0) .. controls (3.31,0.3) and (6.95,1.4) .. (10.93,3.29)   ;
\draw    (409,193.36) -- (369.3,193.43) ;
\draw [shift={(367.3,193.44)}, rotate = 359.89] [color={rgb, 255:red, 0; green, 0; blue, 0 }  ][line width=0.75]    (10.93,-3.29) .. controls (6.95,-1.4) and (3.31,-0.3) .. (0,0) .. controls (3.31,0.3) and (6.95,1.4) .. (10.93,3.29)   ;
\draw    (432.36,63.36) -- (425.66,100.11) ;
\draw [shift={(425.3,102.08)}, rotate = 280.33] [color={rgb, 255:red, 0; green, 0; blue, 0 }  ][line width=0.75]    (10.93,-3.29) .. controls (6.95,-1.4) and (3.31,-0.3) .. (0,0) .. controls (3.31,0.3) and (6.95,1.4) .. (10.93,3.29)   ;
\draw    (432.36,63.36) -- (385.96,94.47) ;
\draw [shift={(384.3,95.58)}, rotate = 326.16] [color={rgb, 255:red, 0; green, 0; blue, 0 }  ][line width=0.75]    (10.93,-3.29) .. controls (6.95,-1.4) and (3.31,-0.3) .. (0,0) .. controls (3.31,0.3) and (6.95,1.4) .. (10.93,3.29)   ;
\draw    (237.36,193.36) -- (286.63,160.55) ;
\draw [shift={(288.3,159.44)}, rotate = 146.34] [color={rgb, 255:red, 0; green, 0; blue, 0 }  ][line width=0.75]    (10.93,-3.29) .. controls (6.95,-1.4) and (3.31,-0.3) .. (0,0) .. controls (3.31,0.3) and (6.95,1.4) .. (10.93,3.29)   ;
\draw [color={rgb, 255:red, 255; green, 0; blue, 0 }  ,draw opacity=1 ]   (331.21,131.21) .. controls (380.3,123.51) and (406.3,167.01) .. (409,193.36) ;
\draw [color={rgb, 255:red, 255; green, 0; blue, 0 }  ,draw opacity=1 ]   (331.21,131.21) .. controls (338.51,156.01) and (356.8,188.3) .. (409,192.15) ;
\draw [color={rgb, 255:red, 74; green, 144; blue, 226 }  ,draw opacity=1 ]   (354.71,170.11) .. controls (352.21,160.54) and (374.71,139.54) .. (386.64,148.5) ;
\draw [color={rgb, 255:red, 74; green, 144; blue, 226 }  ,draw opacity=1 ]   (354.71,170.11) .. controls (377.71,180.61) and (391.21,159.11) .. (386.64,148.5) ;
\draw  [fill={rgb, 255:red, 0; green, 0; blue, 0 }  ,fill opacity=1 ] (234,193.36) .. controls (234,191.5) and (235.5,190) .. (237.36,190) .. controls (239.21,190) and (240.71,191.5) .. (240.71,193.36) .. controls (240.71,195.21) and (239.21,196.71) .. (237.36,196.71) .. controls (235.5,196.71) and (234,195.21) .. (234,193.36) -- cycle ;
\draw  [fill={rgb, 255:red, 0; green, 0; blue, 0 }  ,fill opacity=1 ] (405.64,193.36) .. controls (405.64,191.5) and (407.15,190) .. (409,190) .. controls (410.85,190) and (412.36,191.5) .. (412.36,193.36) .. controls (412.36,195.21) and (410.85,196.71) .. (409,196.71) .. controls (407.15,196.71) and (405.64,195.21) .. (405.64,193.36) -- cycle ;
\draw  [fill={rgb, 255:red, 0; green, 0; blue, 0 }  ,fill opacity=1 ] (429,63.36) .. controls (429,61.5) and (430.5,60) .. (432.36,60) .. controls (434.21,60) and (435.71,61.5) .. (435.71,63.36) .. controls (435.71,65.21) and (434.21,66.71) .. (432.36,66.71) .. controls (430.5,66.71) and (429,65.21) .. (429,63.36) -- cycle ;

\draw (485.73,28.5) node [anchor=north west][inner sep=0.75pt]   [align=left] {$-\frac{k\su_1}{s}-\frac{r\su_2}{s}$};
\draw (337,62) node [anchor=north west][inner sep=0.75pt]   [align=left] {$\frac{k\su_1}{s}+\frac{r\su_2}{s}$};
\draw (437,80) node [anchor=north west][inner sep=0.75pt]   [align=left] {$\frac{\su_1}{s}+\frac{p\su_2}{s}$};
\draw (219.64,143.43) node [anchor=north west][inner sep=0.75pt]   [align=left] {$-\frac{k\su_1}{r}-\su_2$};
\draw (121,202) node [anchor=north west][inner sep=0.75pt]   [align=left] {$\frac{k\su_1}{r}+\su_2$};
\draw (252.64,203.93) node [anchor=north west][inner sep=0.75pt]   [align=left] {$-\frac{\su_1}{r}$};
\draw (361.64,203.93) node [anchor=north west][inner sep=0.75pt]   [align=left] {$\frac{\su_1}{p}$};
\draw (430.64,218.93) node [anchor=north west][inner sep=0.75pt]   [align=left] {$\su_2$};
\draw (422.14,162.93) node [anchor=north west][inner sep=0.75pt]   [align=left] {$-\frac{\su_1}{p}-\su_2$};
\draw (173.64,171.93) node [anchor=north west][inner sep=0.75pt]   [align=left] {$p_1=[\pt/\bZ_r]$};
\draw (421.64,187.93) node [anchor=north west][inner sep=0.75pt]   [align=left] {$p_3=[\pt/\Zp]$};
\draw (376.64,35.93) node [anchor=north west][inner sep=0.75pt]   [align=left] {$p_2=[\pt/\bZ_s]$};
\draw (270,113) node [anchor=north west][inner sep=0.75pt]   [align=left] {$D_3$};
\draw (297.67,220.67) node [anchor=north west][inner sep=0.75pt]   [align=left] {$D_2$};
\draw (452,131.67) node [anchor=north west][inner sep=0.75pt]   [align=left] {$D_1$};

\end{tikzpicture}
\end{center}
\caption{The toric diagram of $\mathcal{O}(-D_1-D_2)\rightarrow \bP(r,s,p)$, with orbifold points $p_1, p_2,p_3$
and weights of torus action labeled. The red projective line is given by the map $t\mapsto [t^{r/p}, t^{s/p}, 1]\in
\bP(r,s,p)$. The blue circle corresponds to the intersection of $\Lrs$ with the fiber at $\fp_0= p_3$
before the compatification.}
\end{figure*}
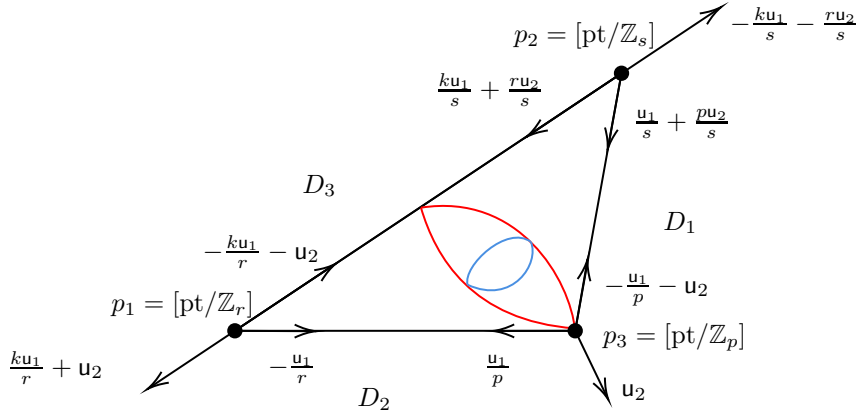

By localization computation, one obtains
\[
    N_{g,\vec{\gamma},\vec{\mu}} = \prod_{j=1}^{n}D'(\mu_j)
        \cdot \int_{[\overline{\mathcal{M}}_{g,n+l}(\X,0)]^{\vir}}\frac{\prod_{i=1}^l \ev^*_i\gamma_i\prod_{j=1}^{n}\ev^*_{l+j}\rho_{0, \langle-\frac{\mu_j r}{p}\rangle}}
        {\prod_{j=1}^{n}\frac{\sv}{\mu_j}(\frac{\sv}{\mu_j} - \hat{\psi}_{l+j})},
\]
where $D'(\mu_j)$ is defined in \eqref{eqn:diskfactor}. By comparing with Theorem \ref{thm:localization}, we obtain the following theorem.
\begin{theorem}\label{thm:open-relative1}
 We have   
    \[
        \langle \gamma_1,\dots,\gamma_l\rangle_{g,l,0,\vec{\mu}}^{\X,\Lrs,\T} = N_{g,\vec{\gamma},\vec{\mu}}.
    \]
\end{theorem}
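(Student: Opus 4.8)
The plan is to observe that both sides reduce to the same $\T$-equivariant localization integral over $[\overline{\mathcal{M}}_{g,n+l}(\X,0)]^{\vir}$, so that the theorem becomes a direct comparison. On the open side this is already supplied: specializing Theorem~\ref{thm:localization} to $d=0$ writes $\langle \gamma_1,\dots,\gamma_l\rangle_{g,l,0,\vec{\mu}}^{\X,\Lrs,\T}$ as $\prod_{j=1}^n D'(\mu_j)$, with the disk factor of \eqref{eqn:diskfactor}, times the integral of $\prod_{i=1}^l \ev_i^*\gamma_i\,\prod_{j=1}^n \ev_{l+j}^*\rho_{0,\langle -\frac{\mu_j r}{p}\rangle}$ against $\prod_{j=1}^n \frac{\sv}{\mu_j}(\frac{\sv}{\mu_j}-\hat{\psi}_{l+j})^{-1}$ over $[\overline{\mathcal{M}}_{g,n+l}(\X,0)]^{\vir}$. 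It therefore suffices to show that the relative invariant $N_{g,\vec{\gamma},\vec{\mu}}$ equals the very same expression.

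The second, and main, step is the virtual localization for $N_{g,\vec{\gamma},\vec{\mu}}$ on $\overline{\cM}_{g,\vec{\gamma}}(\fX,D_3,d',\vec{\mu})$, with $d'=\sum_{j=1}^n\mu_j$, computed with the full $2$-torus $\bT\subset\fX$ and then specialized via $\su_1=r\sv$, $\su_2=-k\sv$. This reduces $N_{g,\vec{\gamma},\vec{\mu}}$ to the fixed-locus expression $\frac{1}{\prod_j\mu_j}\cdot\frac{e_\bT(V_{g,\vec{\gamma},\vec{\mu}})\,e_\bT(\cT^{2,m})}{e_\bT(\cT^{1,m})}$ recorded above. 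Since the $\T$-action is trivial along $D_3$ and on $\cO(-D_1-D_2)|_{D_3}$, the fixed loci force the interior geometry to concentrate at the orbifold point $p_3=[\pt/\Zp]$, whose formal neighborhood is precisely the chart $\X_0=[\C^3/\Zp]$. This identifies the closed part of the fixed locus with $\overline{\mathcal{M}}_{g,n+l}(\X,0)$, produces the integrand $\prod_i \ev_i^*\gamma_i$, and — through the monodromy $\langle -\frac{\mu_j r}{p}\rangle$ forced at the $j$-th relative ramification point — the twisted insertions $\ev_{l+j}^*\rho_{0,\langle -\frac{\mu_j r}{p}\rangle}$. The remaining disk/rubber legs over $D_3$ of multiplicity $\mu_j$, together with the smoothing of the nodes where they attach, assemble via the weights $\frac{r\sv}{p},\frac{s\sv}{p},-k\sv$ at $p_3$ and the Euler class $e(V_{g,\vec{\gamma},\vec{\mu}})$ of $R^1\pi_*\tilde{F}^*\cO(-D_1-D_2)$ into exactly $\prod_j D'(\mu_j)$ and the descendent denominators. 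This is the specialization of the computation of \cite{FLT22}; comparing the result with the $d=0$ case of Theorem~\ref{thm:localization} then yields the claim.

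The hard part will be the precise bookkeeping of the legs over the relative divisor. One must track the $\Zp$-gerbe structure at $p_3$, the fractional weights entering through the orbidisk domains $[\{|z|\le b_1\}/\Zp]$, and the contributions of the moving parts $\cT^{1,m},\cT^{2,m}$ of the tangent–obstruction sequence built from $\Aut(C,x,y)$, $\Def(C,x,y)$ and $H^\bullet(C,f^*T\fX(-\log D_3))$, in order to confirm that they combine to reproduce $D'(\mu_j)$ with no spurious equivariant factors. This is a standard but delicate localization calculation of the kind carried out in \cite{FLT22,DSV}; with it in hand the final comparison is immediate.
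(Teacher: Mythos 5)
Your proposal follows the same route as the paper: both sides are reduced to the identical localization integral over $[\overline{\cM}_{g,n+l}(\X,0)]^{\vir}$, with the open side handled by the $d=0$ case of Theorem~\ref{thm:localization} and the relative side by $\T$-equivariant virtual localization on $\overline{\cM}_{g,\vec{\gamma}}(\fX,D_3,d',\vec{\mu})$ (restricting $\bT$ via $\su_1=r\sv$, $\su_2=-k\sv$, using triviality of the $\T$-action on $D_3$ and on $\cO(-D_1-D_2)|_{D_3}$ to discard bubble components, and assembling the multiplicity-$\mu_j$ legs into the disk factors $D'(\mu_j)$). The paper is equally terse about the final "standard localization" bookkeeping, so your write-up matches its proof in both structure and level of detail.
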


\begin{remark}
In the localization computation above, we can also use the two dimensional torus $\bT$ to do localization.
In this situation, we can put the point class insertion $[\pt]$ in $N_{g,\vec{\gamma},\vec{\mu}}$ as $r[p_1]$ or $s[p_2]$ and we can still obtain Theorem \ref{thm:open-relative1}.
\end{remark}

\subsection{General Open-closed Gromov-Witten invariants as formal relative Gromov-Witten invariants}
\label{sec:relative2}
In the general case when $d>0$, we cannot express our open-closed Gromov-Witten invariants as relative Gromov-Witten invariants of an ordinary toric CY 3-orbifold. In order to solve this problem, we consider \emph{formal toric CY 3-folds} as in \cite[Section 3, Section 4]{LLLZ09}. Then we can interpret the open-closed Gromov-Witten invariants $\langle \gamma_1,\dots,\gamma_\ell\rangle^{\cX,L_{r,s},\T}_{g,\ell,d,\vec{\mu}}$ as relative Gromov-Witten invariants of a certain formal toric CY 3-fold. For completeness, we briefly review the construction of formal toric CY 3-folds in \cite{LLLZ09}.

\subsubsection{Formal toric CY graphs}
Let $\Gamma$ be an oriented planar graphs. We first introduce some notations.
\begin{enumerate}
\item $E^o(\Gamma)$ is the set of oriented edges of $\Gamma$.
\item $V(\Gamma)$ is the set of vertices of $\Gamma$.
\item Define the \emph{orientation reversing map} $-:E^o(\Gamma)\to E^o(\Gamma), e\mapsto -e$ which reverses the orientation of an edge.
\item $E(\Gamma):=E^o(\Gamma)/\{\pm 1\}$ is the set of unoriented edges of $\Gamma$.
\item $\mathfrak{v}_0:E^o(\Gamma)\to V(\Gamma)$ is the initial vertex map and $\mathfrak{v}_0:E^o(\Gamma)\to V(\Gamma)$ is the terminal vertex map
\end{enumerate}
We require that the orientation reversing map is fixed point free and that both $\mathfrak{v}_0$ and $\mathfrak{v}_1$ are surjective and $\mathfrak{v}_0(e)=\mathfrak{v}_1(-e)$ for $\forall e\in E^o(\Gamma)$. We also require that the valence of any vertex of $\Gamma$ is 3 or 1 and let $V_1(\Gamma)$ and $V_3(\Gamma)$ be the set of univalent vertices and trivalent vertices respectively.

We define
$$
E^\mathfrak{f}=\{e\in E^o(\Gamma)|\mathfrak{v}_1(e)\in V_1(\Gamma)\}.
$$
Let $u_1,u_2$ be the standard basis of $\bZ^{\oplus 2}$ such that the ordered basis $(u_1,u_2)$ determines an orientation on $\bR^2$.

\begin{definition}
A formal toric CY graph is a planar graph $\Gamma$ described above together with a position map
$$
\mathfrak{p}:E^o(\Gamma)\to \bZ^{\oplus 2}\setminus \{0\}
$$
and a framing map
$$
\mathfrak{f}:E^\mathfrak{f}(\Gamma)\to \bZ^{\oplus 2}\setminus \{0\}
$$
such that
\begin{enumerate}
\item $\mathfrak{p}(-e)=-\mathfrak{p}(e)$ for $\forall e\in E^o(\Gamma)$.\\
\item  For a trivalent vertex $v\in V_3(\Gamma)$ with $\mathfrak{v}_0^{-1}(v)=\{e_1,e_2,e_3\}$, we require that $\mathfrak{p}(e_1)+\mathfrak{p}(e_2)+\mathfrak{p}(e_3)=0$.\\
\item For a trivalent vertex $v\in V_3(\Gamma)$ with $\mathfrak{v}_0^{-1}(v)=\{e_1,e_2,e_3\}$, any two vectors in $\{\mathfrak{p}(e_1),\mathfrak{p}(e_2),\mathfrak{p}(e_3)\}$ form an integral basis of $\bZ^{\oplus 2}$.\\
\item For $e\in E^\mathfrak{f}(\Gamma), \mathfrak{p}(e)\wedge \mathfrak{f}(e)=u_1\wedge u_2$.

\end{enumerate}
\end{definition}

\subsubsection{Formal toric CY 3-folds}
The motivation of introducing formal toric CY 3-folds is based on the following observation. Let $Y$ be a toric CY 3-fold and let $\bT\cong (\bC^*)^2$ be the CY torus acting on $Y$. Let $Y^1$ be the closure of the 1-dimensional $\bT-$orbits in $Y$. By virtual localization formula, the $\bT-$equivariant Gromov-Witten theory of $Y$ is determined by $Y^1$ together with the normal bundle of each irreducible component of $Y^1$ in $Y$. In other words, we only need the information of the formal neighborhood of $Y^1$ in $Y$ to study the $\bT-$equivariant Gromov-Witten theory of $Y$. So one may define $\hat{Y}$ to be the formal completion of $Y$ along $Y^1$ and try to define and study the Gromov-Witten theory of $\hat{Y}$. The advantage of considering $\hat{Y}$ is that if we are given a formal toric CY graph $\Gamma$, we can always construct an associated $\hat{Y}$ even if there does not exist a usual toric CY 3-fold corresponding to this formal toric CY graph. This construction is called the \emph{formal toric CY 3-fold} associated to $\Gamma$. In the following, we briefly describe the construction of formal toric CY 3-folds.

We fix a formal toric CY graph $\Gamma$. For any edge $e\in E(\Gamma)$, we associate a relative toric CY 3-fold $(Y_e,D_e)$, where $Y_e$ is the total space of the direct sum of two line bundles  over $\bP^1$ and $D_e$ is a divisor of $Y_e$ which can be empty or the fiber(s) of $Y_e\to \bP^1$ over 1 or 2 points on $\bP^1$, depending on the number of univalent vertices of $e$ (see \cite[Section 3]{LLLZ09}). Here $(Y_e,D_e)$ being a relative CY 3-fold means
$$\Lambda^3\Omega_{Y_e}(\log D_e)\cong\cO_{Y_e}.$$
Moreover, the formal toric CY graph also determines a $\bT\cong (\bC^*)^2$ action on $Y_e$, under which $D_e$ is an invariant divisor. Let $\Sigma(e)$ be the formal completion of $Y_e$ along its zero section $\bP^1$. The divisor $D_e$ descends to a divisor $\hat{D}_e$ of $\Sigma(e)$ and there is an induced $\bT$ action on $\Sigma(e)$. The formal relative toric CY 3-fold $(\hat{Y},\hat{D})$ is obtained by gluing all the $(\Sigma(e),\hat{D}_e)$'s along the trivalent vertices of $\Gamma$ (Each trivalent vertex $v$ corresponds to a formal scheme $\Spec(\bC[[x_1,x_2,x_3]])$, which can be naturally embedded into $\Sigma(e)$ for $e$ connected to $v$). As a set, $\hat{Y}$ is a union of $\bP^1$'s. Each connected component of the divisor $\hat{D}$ corresponds to a univalent vertex of $\Gamma$. The $\bT$ actions on each $\Sigma(e)$ are also glued together to form a $\bT$ action on $\hat{Y}$ such that the divisor $\hat{D}$ is $\bT$-invariant.

The result in \cite[Section 4]{LLLZ09} shows that one can define $\bT-$equivariant relative Gromov-Witten invariants for the formal relative toric CY 3-fold $(\hat{Y},\hat{D})$. When the formal toric CY graph comes from the toric graph of a usual toric CY 3-fold, these formal Gromov-Witten invariants coincide with the usual Gromov-Witten invariants of the toric CY 3-fold.

\subsubsection{Our case}
In this subsection, we apply the above construction to our case. First of all, we should notice that the above construction can be generalized to the case when there are orbifold structures on $\hat{Y}$. This means that if we have an edge $e\in E^\mathfrak{f}(\Gamma)$, $Y_e$ can be the direct sum of two orbifold line bundles over the weighted projective line $\bP(p,r)$ and the divisor $D_e$ is the fiber over the orbifold point $[\pt/\bZ_r]$ on $\bP(p,r)$.

\begin{figure}[!ht]
    \centering

    \tikzset{every picture/.style={line width=0.75pt}} 

    \begin{tikzpicture}[x=0.90pt,y=0.90pt,yscale=-1,xscale=1]

    \draw    (250,160) -- (236.41,173.59) ;
    \draw [shift={(235,175)}, rotate = 315] [color={rgb, 255:red, 0; green, 0; blue, 0 }  ][line width=0.75]    (10.93,-3.29) .. controls (6.95,-1.4) and (3.31,-0.3) .. (0,0) .. controls (3.31,0.3) and (6.95,1.4) .. (10.93,3.29)   ;
    \draw    (250,160) -- (250,122) ;
    \draw [shift={(250,120)}, rotate = 90] [color={rgb, 255:red, 0; green, 0; blue, 0 }  ][line width=0.75]    (10.93,-3.29) .. controls (6.95,-1.4) and (3.31,-0.3) .. (0,0) .. controls (3.31,0.3) and (6.95,1.4) .. (10.93,3.29)   ;
    \draw    (250,160) -- (350,160) ;
    \draw    (350,160) -- (350,200) ;
    \draw    (350,160) -- (390,120) ;
    \draw    (210,200) -- (223.59,186.41) ;
    \draw [shift={(225,185)}, rotate = 135] [color={rgb, 255:red, 0; green, 0; blue, 0 }  ][line width=0.75]    (10.93,-3.29) .. controls (6.95,-1.4) and (3.31,-0.3) .. (0,0) .. controls (3.31,0.3) and (6.95,1.4) .. (10.93,3.29)   ;
    \draw    (210,200) -- (250,160) ;
    \draw  [line width=3] [line join = round][line cap = round] (210.5,199.5) .. controls (210.5,199.5) and (210.5,199.5) .. (210.5,199.5) ;
    \draw  [line width=3] [line join = round][line cap = round] (250.5,159.5) .. controls (250.5,159.5) and (250.5,159.5) .. (250.5,159.5) ;
    \draw  [line width=3] [line join = round][line cap = round] (350.5,160) .. controls (350.5,160) and (350.5,160) .. (350.5,160) ;
    \draw  [dash pattern={on 4.5pt off 4.5pt}]  (210,200) -- (216.69,157.98) ;
    \draw [shift={(217,156)}, rotate = 99.04] [color={rgb, 255:red, 0; green, 0; blue, 0 }  ][line width=0.75]    (10.93,-3.29) .. controls (6.95,-1.4) and (3.31,-0.3) .. (0,0) .. controls (3.31,0.3) and (6.95,1.4) .. (10.93,3.29)   ;
    \draw  [dash pattern={on 4.5pt off 4.5pt}]  (210,200) -- (205.31,229.42) ;
    \draw [shift={(205,231.4)}, rotate = 279.05] [color={rgb, 255:red, 0; green, 0; blue, 0 }  ][line width=0.75]    (10.93,-3.29) .. controls (6.95,-1.4) and (3.31,-0.3) .. (0,0) .. controls (3.31,0.3) and (6.95,1.4) .. (10.93,3.29)   ;
    \draw    (250,160) -- (278,160) ;
    \draw [shift={(280,160)}, rotate = 180] [color={rgb, 255:red, 0; green, 0; blue, 0 }  ][line width=0.75]    (10.93,-3.29) .. controls (6.95,-1.4) and (3.31,-0.3) .. (0,0) .. controls (3.31,0.3) and (6.95,1.4) .. (10.93,3.29)   ;

    \draw (246.21,169.71) node [anchor=north west][inner sep=0.75pt]   [align=left] {$\frac{\su_1}{p}$};
    \draw (266.71,147.21) node [anchor=north west][inner sep=0.75pt]   [align=left] {$\su_2$};
    \draw (255.21,118.21) node [anchor=north west][inner sep=0.75pt]   [align=left] {$-\frac{\su_1}{p}-\su_2$};
    \draw (222.21,190.71) node [anchor=north west][inner sep=0.75pt]   [align=left] {$-\frac{\su_1}{r}$};
    \draw (149.71,151.71) node [anchor=north west][inner sep=0.75pt]   [align=left] {$-\frac{k}{r}\su_1-\su_2$};
    \draw (216.71,214.21) node [anchor=north west][inner sep=0.75pt]   [align=left] {$\frac{k}{r}\su_1+\su_2$};
    \draw (161.71,190.21) node [anchor=north west][inner sep=0.75pt]   [align=left] {$[\pt/\bZ_r]$};
    \draw (272,162) node [anchor=north west][inner sep=0.75pt]   [align=left] {$\bP^1\times \cB\Zp$};
    \draw (292,151) node [anchor=north west][inner sep=0.75pt]   [align=left] {$e$};
    \draw (236.71,138.21) node [anchor=north west][inner sep=0.75pt]   [align=left] {$e_2$};
    \draw (236.71,152.21) node [anchor=north west][inner sep=0.75pt]   [color={rgb, 255:red, 208; green, 2; blue, 27}  ,opacity=1 ] [align=left] {$v_0$};
    \draw (219.71,170.21) node [anchor=north west][inner sep=0.75pt]   [align=left] {$e_1$};
    \draw (353.71,155.21) node [anchor=north west][inner sep=0.75pt]   [color={rgb, 255:red, 208; green, 2; blue, 27}  ,opacity=1 ] [align=left] {$v_1$};
    \draw (370.71,138.21) node [anchor=north west][inner sep=0.75pt]   [align=left] {$e_3$};
    \draw (353.71,190.21) node [anchor=north west][inner sep=0.75pt]   [align=left] {$e_4$};

    \end{tikzpicture}

\caption{Toric diagram of the formal toric Calabi-Yau threefold}\label{fig:relative-ii}
\end{figure}

Now we consider the following formal toric CY graph $\Gamma$ (see Figure \ref{fig:relative-ii}). There are two trivalent vertices $v_0$ and $v_1$ with an
edge $e$ connecting them. There are four edges $e_1,e_2,e_3,e_4\in E^\mathfrak{f}(\Gamma)$ with $\mathfrak{v}_0(e_1)=\mathfrak{v}_0(e_2)=v_0,
\mathfrak{v}_0(e_3)=\mathfrak{v}_0(e_4)=v_1$. The graph $\Gamma$ defines a formal relative toric CY 3-orbifold $(\hat{Y},\hat{D})$.
The divisor $\hat{D}$ is of the form $\hat{D}_1\cup\hat{D}_2\cup\hat{D}_3\cup\hat{D}_4$, where each $\hat{D}_i$ is a connected component of
$\hat D$ corresponding to the univalent vertex $\mathfrak{v}_1(e_i)$. The formal 3-fold $\hat{Y}$ is obtained by gluing $\Sigma(e),\Sigma(e_1),\Sigma(e_2)$
along the trivalent vertex $v_0$ and by gluing $\Sigma(e),\Sigma(e_3),\Sigma(e_4)$ along the trivalent vertex $v_1$. Here $\Sigma(e)$ is the formal
completion of $Y_e$ along the zero section and $Y_e$ is the total space of $[\cO_{\bP^1}(-1)\oplus\cO_{\bP^1}(-1)/\bZ_p]\to\bP^1\times\cB\bZ_p$.
In a similar way, let $Y_{e_1}$ be the total space of $N_{D_2/\fX}\oplus \cO_{\fX}(-D_1-D_2)\mid_{D_2}\to D_2$ and then $\Sigma(e_1)$ is the formal completion
of $Y_{e_1}$ along the zero section $\bP(p,r)$. The $\bT$ action on $\cO_{\fX}(-D_1-D_2)\to\fX$ (see Section \ref{sec:relative1}) induces a $\bT$ action on
$Y_{e_1}$ which coincides with the $\bT$ action given by the formal toric CY graph.

\subsubsection{The formal relative Gromov-Witten invariants}
Let $\Mbar_{g,l}(\hat{Y},\hat{D}_1,d,\vec{\mu})$ be the moduli space of relative stable maps to $(\hat{Y},\hat{D}_1)$, where $\vec{\mu}=(\mu_1,\cdots,\mu_n)$ is a partition of some positive integer $d'$ and $d$ is the degree of the stable map restricted to those components which are mapped to the $\bT-$invariant 1-orbit corresponding to the edge $e$. The partition $\vec{\mu}$ describes the ramification profile over $\hat{D}_1$. Define the formal relative Gromov-Witten invariant as
\begin{equation}
\langle \gamma_1,\dots,\gamma_\ell\mid\vec{\mu},\hat{D}_1\rangle^{\hat{Y},\bT}_{g,\ell,d,\vec{\mu}}
:=\prod_{i=1}^{n}(-\frac{k\su_1}{r}-\su_2)
\int_{[\Mbar_{g,\ell}(\hat{Y},\hat{D}_1,d,\vec{\mu})^\bT]^{\vir}}
\frac{\prod_{j=1}^\ell \ev_j^*\gamma_j}{e_{\bT}(N^\vir)}.
\end{equation}
where $\gamma_1,\cdots,\gamma_\ell$ are $\bT-$equivariant cohomology classes of $\cX$. Notice that since the divisor $\hat{D}$ is decomposed into 4 connected components $\hat{D}_1\cup\hat{D}_2\cup\hat{D}_3\cup\hat{D}_4$, the above relative Gromov-Witten invariant is a special case of the relative Gromov-Witten invariant of $(\hat{Y},\hat{D})$ by letting the ramification profiles over $\hat{D}_2,\hat{D}_3,\hat{D}_4$ be empty. Therefore, the result in \cite[Section 4]{LLLZ09} shows that the above formal relative Gromov-Witten invariant is well-defined. We should also notice that since $\gamma_1,\cdots,\gamma_\ell$ are $\bT-$equivariant cohomology classes of $\cX$, they can be viewed as $\bT-$equivariant cohomology classes of $\hat{Y}$.

By Theorem \ref{thm:localization} and standard localization computation on $\langle \gamma_1,\dots,\gamma_\ell\mid\vec{\mu},\hat{D}_1\rangle^{\hat{Y},\bT}_{g,\ell,d,\vec{\mu}}$, it is easy to obtain the following theorem

\begin{theorem}
The open-closed Gromov-Witten invariant $\langle \gamma_1,\dots,\gamma_\ell\rangle^{\cX,L_{r,s},\T}_{g,\ell,d,\vec{\mu}}$ can be expressed as the formal relative Gromov-Witten invariant in the following way:
$$
\langle \gamma_1,\dots,\gamma_\ell\rangle^{\cX,L_{r,s},\T}_{g,\ell,d,\vec{\mu}}=
\langle \gamma_1,\dots,\gamma_\ell\mid\vec{\mu},\hat{D}_1\rangle^{\hat{Y},\bT}_{g,\ell,d,\vec{\mu}}
\mid_{\su_1=r\sv,\su_2=-k\sv}
$$
\end{theorem}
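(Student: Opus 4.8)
The plan is to apply the $\bT$-virtual localization formula to the formal relative Gromov-Witten invariant on the right-hand side and to check that the resulting fixed-point contributions reassemble, after the substitution $\su_1=r\sv,\su_2=-k\sv$, into the right-hand side of Theorem \ref{thm:localization}; since that expression equals $\langle\gamma_1,\dots,\gamma_\ell\rangle^{\cX,L_{r,s},\T}_{g,\ell,d,\vec\mu}$ by definition, the identity will follow. First I would set up localization on $\Mbar_{g,\ell}(\hat Y,\hat D_1,d,\vec\mu)$. By the construction of formal relative Gromov-Witten theory in \cite{LLLZ09}, the $\bT$-fixed loci are indexed by decorated subgraphs of the formal toric CY graph $\Gamma$ of Figure \ref{fig:relative-ii}, and the virtual localization formula of Graber--Pandharipande applies verbatim to the formal geometry. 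Because the relative condition is imposed only over $\hat D_1$, with ramification profile $\vec\mu$, and is trivial over $\hat D_2,\hat D_3,\hat D_4$, the only fixed loci that contribute are those whose stable maps carry their degree-$d$ closed part along the central edge $e$ and realize the winding data $\vec\mu$ through covers of the external edge $e_1$ terminating on $\hat D_1$.

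The central point is the locality of $\bT$-equivariant Gromov-Witten theory. The formal neighborhood $\Sigma(e)$ of the central edge is precisely the formal neighborhood of the $\bT$-invariant curve $\bP^1\times\cB\bZ_p$ inside $Y_e=[(\cO_{\bP^1}(-1)\oplus\cO_{\bP^1}(-1))/\bZ_p]$, which is the formal neighborhood of the corresponding curve in $\cX$. Hence the sub-map supported on $\Sigma(e)$, together with the genus and interior-marking distribution at the two trivalent vertices $v_0,v_1$, reproduces the closed descendent integral $\int_{[\Mbar_{g,n+\ell}(\cX,d)]^\vir}$ of Theorem \ref{thm:localization}, carrying the interior insertions $\prod_i\ev_i^*\gamma_i$ and, at each marked point that becomes a node with a disk, the node-smoothing factor $\frac{1}{\frac{\sv}{\mu_j}-\hat\psi_{\ell+j}}$. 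This is exactly the mechanism by which the formal theory of $\hat Y$ recovers the closed theory of $\cX$ for arbitrary degree $d$, an input unavailable in the $d=0$ argument of Theorem \ref{thm:open-relative1} using the honest orbifold $\fX=\bP(r,s,p)$.

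Next I would compute the contribution of the external edge $e_1$ together with the relative condition over $\hat D_1$. The degree-$\mu_j$ cover of $e_1$ landing on the orbifold point $[\pt/\bZ_r]$, with monodromy $\langle-\frac{\mu_j r}{p}\rangle$ at the resulting orbifold node, contributes the disk factor $D'(\mu_j)$ of \eqref{eqn:diskfactor} and the twisted insertion $\rho_{0,\langle-\frac{\mu_j r}{p}\rangle}$ on the $\cX$-side node; this is the same multiple-cover and Hodge-integral computation, now carried out on $Y_{e_1}=N_{D_2/\fX}\oplus\cO_\fX(-D_1-D_2)|_{D_2}$, that produced $D'(\mu)$ in Theorem \ref{thm:open-relative1} and in \cite[Proposition 3.3]{FLT22}, and it is local at $\fp_0$ and therefore independent of $d$. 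The normalization prefactor $\prod_{i=1}^n(-\frac{k\su_1}{r}-\su_2)$ in the definition of the formal relative invariant is precisely the $\bT$-weight tangent to $\hat D_1$ and cancels the relative-divisor Euler-class factor introduced by the relative structure.

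Finally I would impose $\su_1=r\sv,\su_2=-k\sv$. Under this specialization the weights collapse as in the table of Section \ref{sec:relative1}: the fixed points $p_1,p_2$ acquire degenerate $\T$-weights (one tangent weight and the line-bundle weight vanish), so that the disk contribution concentrates at $\fp_0=p_3$, in agreement with the single-vertex disk geometry of $(\cX,L_{r,s})$. Collecting the edge-$e$, edge-$e_1$, and vertex contributions then yields exactly $\prod_{j=1}^n D'(\mu_j)\int_{[\Mbar_{g,n+\ell}(\cX,d)]^\vir}\frac{\prod_i\ev_i^*\gamma_i\prod_j\ev_{\ell+j}^*\rho_{0,\langle-\frac{\mu_j r}{p}\rangle}}{\prod_j\frac{\sv}{\mu_j}(\frac{\sv}{\mu_j}-\hat\psi_{\ell+j})}$, the right-hand side of Theorem \ref{thm:localization}. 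The main technical obstacle is the orbifold bookkeeping at the nodes---tracking the monodromy $\langle-\frac{\mu_j r}{p}\rangle$ and checking that the prefactor $\prod(-\frac{k\su_1}{r}-\su_2)$ together with the $e_1$-cover Hodge integral reproduces $D'(\mu_j)$ on the nose---but this is a purely local computation at $\fp_0$, independent of the global degree $d$, and hence reduces to the already-established $d=0$ case.
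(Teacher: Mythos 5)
Your proposal is correct and follows essentially the same route as the paper: the paper's entire proof is the one-line remark that the theorem follows from Theorem \ref{thm:localization} together with a ``standard localization computation'' on $\langle \gamma_1,\dots,\gamma_\ell\mid\vec{\mu},\hat{D}_1\rangle^{\hat{Y},\bT}_{g,\ell,d,\vec{\mu}}$, which is exactly the computation you outline (fixed loci supported on the edges $e$ and $e_1$ only, the closed degree-$d$ part along $e$ recovering the descendent theory of $\cX$, the disk factors $D'(\mu_j)$ from the $e_1$-covers, and the specialization $\su_1=r\sv$, $\su_2=-k\sv$). Your write-up simply supplies the details that the paper leaves implicit.
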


\section{Mirror curve and B-model topological string}\label{sec:B}
\subsection{Geometry of the mirror curve}\label{v(eta)}
Let $u, v\in \bC$, and define
\[
      e^{-u}=U, \quad e^{-v}=V.
\]
The mirror curve $C_q$ of the orbifold resolved conifold $\X=\ReducedConi$ is the following affine curve in $(\bC^*)^2$
\begin{equation}\label{StandardMirror}
   H(U,V,q):= U^p + V + 1 + q_1U^pV^{-1} + \sum_{m=1}^{p-1}q_{m+1}U^m = 0.
\end{equation}
Let $P$ be the two dimensional polytope whose vertices are given by $b'_1 = (p,0), b'_2=(0,1), b'_3=(0,0), b'_4=(p,-1)$. Recall that the fan of $\cX$ is given by a cone over a triangulation of $P$. The polytope $P$ determines a toric surface $\bS_P$ with a polarization $L_P$ and $H(U,V,q)$ extends to a section $s_q\in H^0(\bS_P,L_P)$. The compactified mirror curve $\overline{C}_q\in \bS_P$ is defined to be the zero locus of $s_q$. For generic $q$, the compactified mirror curve $\overline{C}_q$ is a compact Riemann
surface of genus $p-1$ and it intersects the anti-canonical divisor $\partial \bS_P=\bS_P\setminus(\bC^*)^2$ transversally at 4 points. The inclusion $J: C_q\rightarrow\overline{C}_q$ induces a surjective homomorphism
$$J_*: H_1(C_q;\bZ)\cong \bZ^{2p+1} \rightarrow H_1(\overline{C}_q;\bZ)\cong\bZ^{2p-2}.$$

Because $r, s, p$ are pairwisely coprime, we know that $r, k$ ($pk = r+s$) are coprime and we can find $\gamma, \delta\in \bZ$ such that
\begin{equation*}
    \left(
        \begin{array}{cc}
            r & -k
            \\
            \gamma & \delta
        \end{array}
    \right)\in SL(2;\bZ).
\end{equation*}
Consider the $SL(2;\bZ)$ transform of coordinates:
\[
    x = ru - kv, \quad y = \gamma u + \delta v,
\]
\begin{equation}
    \label{eqn:coord2}X= e^{-x} = U^rV^{-k}, \quad Y= e^{-y} = U^\gamma V^\delta.
\end{equation}

Under the coordinates $X, V$, the mirror curve $C_q$ of ($\X, \Lrs$) is
\[
    X^{\frac{p}{r}}V^{\frac{kp}{r}} + V + 1 + q_1X^{\frac{p}{r}}V^{\frac{kp}{r}-1} + \sum_{m=1}^{p-1}q_{m+1}X^{\frac{m}{r}}V^{\frac{km}{r}} = 0
\]
Let $\eta = X^{\frac{1}{r}}$. Then $\eta$ is a local coordinate of the compactified mirror curve $\overline{C}_q$ around the point $\mathfrak{s}_0=(X, V) = (0,-1)$.
There exists $\delta>0$ and $\ep>0$ such that for $|q|<\ep$, the function $\eta$ is
well-defined and restricts to an isomorphism
\[
    \eta: D_q\rightarrow D_\delta = \{\eta\in\bC: |\eta|<\delta\},
\]
where $D_q\subset\overline{C}_q$ is an open neighborhood of $\mathfrak{s}_0$. Denote the inverse map of $\eta$ as $\rho_q$, and we define
\[
    \rho_q^{\times n} = \rho_q\times\dots\times\rho_q: (D_\delta)^n \rightarrow (D_q)^n\subset(\overline{C}_q)^n.
\]
In fact, around the point $\mathfrak{s}_0=(X, V) = (0,-1)$ with $v|_{X=0}=-\sqrt{-1}\pi$,
we can expand $v= -\log V(X)$ in powers of $\eta = X^{\frac{1}{r}}$ by \cite[Lemma 4.7]{FLT22}:
\begin{equation*}
    \begin{aligned}
        v(\eta) = -\sqrt{-1}\pi - &\sum\limits_{a_1,\dots,a_p,b\geq 0\atop {a_1\leq b \atop (a_1,\dots, a_p, b-a_1)\neq 0}}^{\infty}
        \frac{
            q_1^{a_1}\dots q_p^{a_p}e^{\sqrt{-1}\pi(\frac{kpb}{r} - a_1 + \frac{k}{r}\sum_{m=1}^{p-1}ma_{m+1})}
        }{
            a_1!\dots a_p!(b-a_1)!
        }
        \\
        & \cdot \prod_{i=1}^{\sum_{m=2}^{p}a_m + b - 1}\left(
                \frac{kpb}{r} - a_1 + \frac{k}{r}\sum_{m=1}^{p-1}ma_{m+1}-i
            \right)\eta^{pb+\sum_{m=1}^{p-1}ma_{m+1}}
    \end{aligned}
\end{equation*}
\begin{remark}
    The mirror curve $(C_q,\eta,V)$ can be derived from the mirror curve (\ref{StandardMirror})
    by the change of variables with fractional framing: $U\mapsto \eta V^{\frac{k}{r}}, V\mapsto V.$
\end{remark}

Define
\[
    \lambda := ydx = \log Y\frac{dX}{X},\quad \Phi := \lambda|_{C_q}.
\]
Then $\Phi$ is a multi-valued analytic 1-form on $C_q$. There is a regular covering map $p: \widetilde{C}_q\rightarrow C_q$ by the restriction of $\bC^2\rightarrow(C^*)^2$
given by $(x,y)\mapsto (e^{-x},e^{-y})$. Then $p^*\Phi$ is a holomorphic 1-form on $\widetilde{C}_q$.
Let
\[
    U_\ep := \{q = (q_1,\dots,q_p)\in \bC^*\times \bC^{p-1}||q_a|<\ep\}
\]
and $\widetilde{\bK}$ be a flat bundle over $U_\ep$ whose fiber over $q$ is $H_1(\widetilde{C}_q;\bC)$.
According to \cite[Section 5.9]{FLZ16}, for $a \in \{1,\dots,p\}$ there is a unique flat section $\widetilde{A}_a$ of $\widetilde{\bK}$ such that
\[
    \tau_a(q) = \frac{1}{2\pi\sqrt{-1}}\int_{\widetilde{A}_a}\Phi.
\]
Let $A_a:= p_*\widetilde{A}_a\in H_1(C_q;\bC)$. We also use $A_a$ represent the cycles $J_*(A_a)\in H_1(\overline{C}_q;\bC)$. By permuting $A_1,\dots,A_p$ if necessary, we can choose
$B_1,\dots, B_{p-1}\in H_1(\overline{C}_q;\bC)$ such that $\{A_1,\dots,A_{p-1},B_1,\dots,B_{p-1}\}$ is a symplectic basis of $H_1(\overline{C}_q;\bC)\cong \bC^{2p-2}$ with
respect to the intersection form $\cap$: $H_1(\overline{C}_q;\bC)\times H_1(\overline{C}_q;\bC)\rightarrow \bC$
\[
    A_i\cap A_j= B_i\cap B_j=0, \quad A_i\cap B_j= -B_j\cap A_i=\delta_{ij},\quad i,j\in\{1,\dots,p-1\}.
\]
We define the fundamental differential of the second kind on $\overline{C}_q$ normalized by $A_1,\dots,A_{p-1}$ as a bilinear symmetric meromorphic differential $B(p_1,p_2)$ characterized by
\begin{itemize}
    \item [$\bullet$] $B(p_1,p_2)$ is holomorphic everywhere except for a double pole along the diagonal $p_1=p_2$. Locally, if $z_1,z_2$ are local coordinates around $(p,p)\in (\overline{C}_q)^2$,
    then
    \[
        B(z_1,z_2) = \left(\frac{1}{(z_1-z_2)^2}+f(z_1,z_2)\right)dz_1dz_2,
    \]
    where $f(z_1,z_2)$ is a holomorphic function and $f(z_1,z_2)=f(z_2,z_1)$.
    \item[$\bullet$] $\int_{p_1\in A_i}B(p_1,p_2)=0$, $i=1,\dots,p-1$.
\end{itemize}

\subsection{Eynard-Orantin topological recursion and B-model topological string}
\label{sec:EO-recursion}
There are $2p$ ramification points $P_\bsi$ ($\bsi\in I_\X$)
of the coordinate function $X: C_q\rightarrow\bC$.
Around the ramification
point $P_\bsi$ with $x(P_\bsi)=x_\bsi$, $y(P_\bsi)=y_\bsi$, we can expand $x$ and $y$ as
\begin{equation*}
    \begin{aligned}
        &x = x_\bsi + \zeta_\bsi^2,
        \\
        &y = y_\bsi + \sum_{k= 1}^\infty h_k^\bsi\zeta_\bsi^k,
    \end{aligned}
\end{equation*}
where $\zeta_\bsi$ is a local coordinate around $P_\bsi$.
We expand the fundamental differential $B(p_1,p_2)$ on $(\overline{C}_q)^2$ around $(P_\bsi, P_{\bsi'})$ as
\[
    B(\zeta_\bsi, \zeta_{\bsi'}) = \left(
        \frac{\delta_{\bsi\bsi'}}{(\zeta_\bsi-\zeta_{\bsi'})^2}+\sum_{k,l\geq 0}B^{\bsi,\bsi'}_{k,l}\zeta_\bsi^k\zeta_{\bsi'}^l
    \right)d\zeta_\bsi d\zeta_{\bsi'}.
\]

We introduce the following notations:
\begin{itemize}
    \item [$\bullet$] For any $\bsi\in I_\X$, non-negative integer $d$, we define
        \[
            \theta_\bsi^d(p) := -(2d-1)!!2^{-d}\Res_{p'\rightarrow P_\bsi} B(p,p')\zeta_\bsi^{-2d-1}.
        \]
        Then $\theta_\bsi^d$ is a meromorphic 1-form on $\overline{C}_q$ with a single pole of order $2d+2$ at $P_\bsi$. In local coordinate $\zeta_\bsi$ near $P_\bsi$,
        \[
            \theta_\bsi^d = \left(
                -\frac{(2d+1)!!}{2^d\zeta_\bsi^{2d+2}} + \text{analytic part in } \zeta_\bsi
            \right)
        \]

    \item [$\bullet$] We define the formal power series $\check{R}_{\bsi'}^{\spa\bsi}(z)$ by asymptotic expansion
        \[
            \check{R}_{\bsi'}^{\spa\bsi}(z) = \frac{\sqrt{z}}{2\sqrt{\pi}}\int_{\gamma_\bsi}e^{-\frac{(x-x_\bsi)}{z}}\theta_{\bsi'}^0.
        \]
        Here $\gamma_\bsi$ is the Lefschetz thimble under the map $x$, i.e. $x(\gamma_\bsi)-x_\bsi \in \bR_{\geq 0}$.

    \item [$\bullet$] For any $\bsi, \bsi'\in I_\X$, we define
        \[
            \check{B}_{k,l}^{\bsi,\bsi'} := \frac{(2k-1)!!(2l-1)!!}{2^{k+l+1}}B_{2k,2l}^{\bsi,\bsi'}.
        \]
        By \cite[Appendix B]{Ey14}, we get
        \[
            \check{B}_{k,l}^{\bsi,\bsi'} = [z^kw^l]\left(\frac{1}{z+w}(\delta_{\bsi,\bsi'}-\sum_{\bsi''\in I_\X}
            \check{R}_{\bsi''}^{\spa\bsi}(z)\check{R}_{\bsi''}^{\spa\bsi'}(w)
            )\right)
        \]

    \item[$\bullet$] Given any $\bsi\in I_\X$, define $\hat{\theta}_\bsi^0 = \theta_\bsi^0$. For any positive integer $k$, define
        \[
            \hat{\xi}_\bsi^k := (-1)^k\left(\frac{d}{dx}\right)^{k-1}\frac{\theta_\bsi^0}{dx}, \quad \hat{\theta}_\bsi^k := d\hat{\xi}_\bsi^k,
        \]
        \[
            \theta_\bsi(z):=\sum_{k=0}^{\infty}\theta_\bsi^k z^k, \quad  \hat{\theta}_\bsi(z):=\sum_{k=0}^{\infty}\hat{\theta}_\bsi^k z^k.
        \]
\end{itemize}
We have the following proposition from \cite[Proposition 6.6]{FLZ16}.
\begin{prop}\label{prop:disk-R-matrix}
    \[
        \theta_\bsi(z) = \sum_{\bsi'\in I_\X} \check{R}_{\bsi'}^{\spa\bsi}(z)\hat{\theta}_{\bsi'}(z).
    \]
\end{prop}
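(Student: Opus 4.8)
The plan is to prove the identity coefficient-by-coefficient in $z$, i.e. to establish that for every $d\geq 0$,
$$
\theta_\bsi^d = \sum_{\bsi'\in I_\X}\sum_{k=0}^{d}\bigl[\check{R}_{\bsi'}^{\spa\bsi}(z)\bigr]_{d-k}\,\hat{\theta}_{\bsi'}^k,
$$
where $[\cdot]_m$ extracts the coefficient of $z^m$. For each $d$ both sides are meromorphic $1$-forms on $\overline{C}_q$ whose only poles sit at the ramification points $\{P_{\bsi''}\}_{\bsi''\in I_\X}$, with vanishing residues (the left side because its pole at $P_\bsi$ has even order $2d+2$ with no $\zeta_\bsi^{-1}$-term, the right side because $\hat{\theta}_{\bsi'}^k=d\hat{\xi}_{\bsi'}^k$ is exact for $k\geq 1$ and $\hat{\theta}_{\bsi'}^0=\theta_{\bsi'}^0$ is of the second kind), and with vanishing $A$-periods (inherited from $\int_{A_i}B=0$ on the left, and from exactness together with $\int_{A_i}\theta^0_{\bsi'}=0$ on the right). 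Since a differential of the second kind is uniquely determined by its principal parts together with its $A$-periods — the difference of two such forms with equal principal parts and equal $A$-periods is a holomorphic differential with zero $A$-periods, hence zero — the statement reduces to matching the principal part of each side at every $P_{\bsi''}$.

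First I would compute the left-hand principal parts. Inserting the bilinear expansion of $B$ into the residue definition of $\theta_\bsi^d$ and taking $\Res_{\zeta\to 0}$ against $\zeta^{-2d-1}$, the regular coefficients $B^{\bsi'',\bsi}_{k,2d}$ contribute only a holomorphic germ near $P_{\bsi''}$, while the diagonal term $\delta_{\bsi\bsi''}(\zeta_{\bsi''}-\zeta)^{-2}$ produces a single polar term; one finds that $\theta_\bsi^d$ is holomorphic at every $P_{\bsi''}$ with $\bsi''\neq\bsi$ and has principal part exactly $-\tfrac{(2d+1)!!}{2^d}\zeta_\bsi^{-2d-2}d\zeta_\bsi$ at $P_\bsi$. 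The right-hand side is more delicate: because $\theta^0_{\bsi'}/dx$ acquires a pole wherever $dx$ vanishes, each $\hat{\theta}_{\bsi'}^k$ has poles at \emph{all} ramification points, not only at $P_{\bsi'}$. Hence every $\bsi'$ contributes to the principal part at a fixed $P_{\bsi''}$, and the claim that these contributions cancel for $\bsi''\neq\bsi$ and telescope to the correct leading pole at $P_\bsi$ is exactly what the off-diagonal and higher-order entries of $\check{R}$ are designed to guarantee.

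To perform the matching I would pass to the Laplace transform $\mathcal{L}_{\bsi''}[\omega](z):=\tfrac{\sqrt z}{2\sqrt\pi}\int_{\gamma_{\bsi''}}e^{-(x-x_{\bsi''})/z}\omega$ along the Lefschetz thimbles, the tool built into the definition of $\check{R}$. Integration by parts — moving the $k-1$ copies of $d/dx$ hidden in $\hat{\xi}_{\bsi'}^k$ and the outer $d$ onto the exponential, each producing a factor $-1/z$, with the thimble boundary terms dropped in the usual formal sense — yields the key lemma $\mathcal{L}_{\bsi''}[\hat{\theta}_{\bsi'}^k](z)=(-1)^k z^{-k}\check{R}_{\bsi'}^{\spa\bsi''}(z)$, so that the Laplace transform intertwines the two families precisely through $\check{R}$. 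Applying $\mathcal{L}_{\bsi''}(\cdot)(z)$ in an auxiliary variable $z$ to the claimed identity written in a variable $w$ then reduces it to
$$
\mathcal{L}_{\bsi''}[\theta_\bsi(w)](z)=\frac{z}{z+w}\sum_{\bsi'\in I_\X}\check{R}_{\bsi'}^{\spa\bsi}(w)\,\check{R}_{\bsi'}^{\spa\bsi''}(z),
$$
which is exactly the double-Laplace transform of $B$ near $(P_{\bsi''},P_\bsi)$, whose right-hand side is the generating series of the $\check{B}^{\bsi'',\bsi}_{k,l}$ through the relation $\check{B}^{\bsi'',\bsi}_{k,l}=[z^k w^l]\tfrac{1}{z+w}(\delta_{\bsi''\bsi}-\sum_{\bsi'}\check{R}_{\bsi'}^{\spa\bsi''}(z)\check{R}_{\bsi'}^{\spa\bsi}(w))$ of \cite[Appendix B]{Ey14} recorded above. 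I expect the main obstacle to lie here: the Laplace transform detects only the even part of each germ, so matching it pins the identity down only up to a second-kind, zero-$A$-period form with purely odd principal parts, and one must still verify directly — exploiting the parity structure of the local expansions of $B$ at the ramification points — that no such discrepancy survives. Equivalently, the genuine work is the local bookkeeping showing that the spurious poles of the exact forms $\hat{\theta}_{\bsi'}^k$ at the foreign ramification points $P_{\bsi''}$ ($\bsi''\neq\bsi$) are annihilated by the $\check{R}$-weighted sum, while the diagonal contributions at $P_\bsi$ assemble into the single leading pole $-\tfrac{(2d+1)!!}{2^d}\zeta_\bsi^{-2d-2}d\zeta_\bsi$ dictated by the left-hand side.
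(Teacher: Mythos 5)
The paper contains no internal proof of this proposition to compare against: it is imported wholesale from \cite[Proposition 6.6]{FLZ16}, as the sentence introducing it says. Your outline is in substance a reconstruction of the argument behind that citation (which rests on \cite[Appendix B]{Ey14}): the integration-by-parts lemma $\mathcal{L}_{\bsi''}[\hat{\theta}_{\bsi'}^k](z)=(-1)^k z^{-k}\check{R}_{\bsi'}^{\spa\bsi''}(z)$ along the thimbles, the consequent reduction of the proposition to the double-Laplace-transform identity for $B$ that the paper itself records as the $\check{B}^{\bsi,\bsi'}_{k,l}$ relation in Section \ref{sec:EO-recursion}, and the rigidity of second-kind differentials with prescribed principal parts and vanishing $A$-periods are exactly the ingredients used there. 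Your principal-part computation for $\theta^d_\bsi$ (single pole at $P_\bsi$ with coefficient $-\tfrac{(2d+1)!!}{2^d}$) and the reduced identity after Laplace transform are both correct: the regularized Gaussian of the polar term reproduces $\tfrac{z\,\delta_{\bsi\bsi''}}{z+w}$ and the even regular coefficients reproduce $-z\sum_{k,l}\check{B}^{\bsi'',\bsi}_{k,l}z^kw^l$, which is precisely the quoted identity rearranged.

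The one obstacle you flag---that the Laplace transform annihilates odd germs---is genuine but resolves exactly along the line you sketch, and more cleanly than you fear. Since $x=x_{\bsi''}+\zeta_{\bsi''}^2$, the operator $d/dx=\tfrac{1}{2\zeta_{\bsi''}}\tfrac{d}{d\zeta_{\bsi''}}$ preserves the parity of powers of $\zeta_{\bsi''}$; feeding in the local expansion of $\theta^0_{\bsi'}$ at $P_{\bsi''}$, one checks that each $\hat{\xi}^k_{\bsi'}$ has only odd negative powers of $\zeta_{\bsi''}$ in its principal part, so each $\hat{\theta}^k_{\bsi'}=d\hat{\xi}^k_{\bsi'}$---like each $\theta^d_\bsi$---has only even-order poles and no residues at every ramification point. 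Consequently the difference of the two sides, coefficient by coefficient in $z$, is a second-kind differential with vanishing $A$-periods whose principal parts are purely of even order; these are exactly what your Laplace-transform matching kills, so the difference is holomorphic with zero $A$-periods, hence zero. No residual odd-parity discrepancy can survive, and your proposal, with this last step made explicit, is a complete proof.
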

Define
\[
    C(p_1,p_2) := (-\frac{\partial}{\partial x(p_1)}- \frac{\partial}{\partial x(p_2)})\left(
        \frac{\omega_{0,2}}{dx(p_1)dx(p_2)}
    \right)(p_1,p_2)dx(p_1)dx(p_2).
\]
Then $C(p_1,p_2)$ is meromorphic on $(\overline{C}_q)^2$ and is holomorphic on $(\overline{C}_q\backslash\{P_\bsi: \bsi\in I_\X\})^2$. By \cite[Lemma 6.9]{FLZ16}, we have
\begin{equation}\label{eqn:C}
    C(p_1,p_2) = \frac{1}{2}\sum_{\bsi\in I_\X}\theta_\bsi^0(p_1)\theta_\bsi^0(p_2).
\end{equation}
For a point $p$ near a ramification point $P_\bsi$, we use $\bar{p}$ to denote the point such that $X(\bar{p})=X(p)$ and $\bar{p}\neq p$ (i.e. $\zeta_\bsi(\bar{p})= -\zeta_\bsi(p)$).
Let $\omega_{g,n}$ be defined recursively by the Eynard-Orantin topological recursion \cite{EO07}:
\[
    \omega_{0,1} = 0, \quad \omega_{0,2}=B(p_1,p_2),
\]
and when $2g-2+n>0$,
\begin{equation*}
    \begin{aligned}
        \omega_{g,n}(p_1,\dots,p_n) = \sum_{\bsi\in I_{\X}}&\Res_{p\rightarrow P_{\bsi}}
        \frac{
            \int_{\xi=p}^{\bar{p}}B(p_n,\xi)
        }{
            2(\Phi(p)-\Phi(\bar{p}))
        }\Big(
            \omega_{g-1,n+1}(p,\bar{p},p_1,\dots,p_{n-1})
            \\
            &+\sum_{g_1+g_2=g}\sum_{I\sqcup J= \{1,\dots,n-1\}} \omega_{g_1,|I|+1}(p,p_I)\omega_{g_2,|J|+1}(\bar{p},p_J)
        \Big),
    \end{aligned}
\end{equation*}
We define the B-model open potentials as follows.
\begin{itemize}
    \item [(1)] (disk invariants) Define the $\textit{B-model disk amplitude}$ by
    \[
        W_{0,1}(\eta,q) = \int_{\eta'=0}^{\eta'=\eta} \rho_q^*(v(\eta')-v(0))(-\frac{d\eta'}{\eta'}).
    \]
    \item[(2)] (annulus invariants) Define the $\textit{B-model annulus potential}$ by
    \[
        W_{0,2}(\eta_1,\eta_2,q) = \int_{0}^{\eta_1}\int_{0}^{\eta_2} ((\rho_q^{\times 2})^*\omega_{0,2}-\frac{d\eta_1'd\eta_2'}{(\eta_1'-\eta_2')^2}).
    \]
    \item[(3)] For $2g-2+n>0$, define
    \[
        W_{g,n}(\eta_1,\dots,\eta_n,q) = \int_{0}^{\eta_1}\dots\int_{0}^{\eta_n} (\rho_q^{\times n})^*\omega_{g,n}.
    \]
\end{itemize}
In Section \ref{v(eta)}, we already get the expression of $v(\eta)$, so
\begin{equation*}
    \begin{aligned}
        W_{0,1}(\eta, q)= &\sum\limits_{a_1,\dots,a_p,b\geq 0\atop {a_1\leq b \atop (a_1,\dots, a_p, b-a_1)\neq 0}}^{\infty}
        \frac{
            q_1^{a_1}\dots q_p^{a_p}e^{\sqrt{-1}\pi(\frac{kpb}{r} - a_1 + \frac{k}{r}\sum_{m=1}^{p-1}ma_{m+1})}
        }{
            a_1!\dots a_p!(b-a_1)!(pb+\sum_{m=1}^{p-1}ma_{m+1})
        }
        \\
        &\cdot \prod_{i=1}^{\sum_{m=2}^{p}a_m + b - 1}\left(
                \frac{kpb}{r} - a_1 + \frac{k}{r}\sum_{m=1}^{p-1}ma_{m+1}-i
            \right)\eta^{pb+\sum_{m=1}^{p-1}ma_{m+1}}
    \end{aligned}
\end{equation*}
Let $\bp=(p_1,\dots,p_n)\in(\overline{C}_q)^n$, $\BoldEta=(\eta_1,\dots, \eta_n)\in(D_\delta)^n$.
Given a label graph $\vec{\Gamma}\in\bGa_{g,n}(\X)$ with $L^o(\Gamma)=\{l_1,\dots, l_n\}$, and
$\bullet = \bp$ or $\BoldEta$,
we assign the weight of $\vec{\Gamma}$ as
\begin{equation*}
    \begin{aligned}
       w_B^\bullet(\vec{\Gamma}) = &(-1)^{g(\vec{\Gamma})-1}\prod_{v\in V(\Gamma)}\left(
        \frac{h_1^{\bsi(v)}}{\sqrt{-2}}
    \right)^{2-2g(v)-\val(v)}
    \left\langle
        \prod_{h\in H(v)}\tau_{k(h)}
    \right\rangle_{g(v)}
    \\
    &\cdot\prod_{e\in E(\Gamma)}\check{B}_{k(h_1(e)),k(h_2(e))}^{\bsi(v_1(e)),\bsi(v_2(e))}
    \prod_{l\in\cL^1(\Gamma)}(\check{\cL}^1)_{k(l)}^{\bsi(l)}\prod_{j=1}^{n}(\check{\cL}^\bullet)_{k(l_j)}^{\bsi(l_j)}
    \end{aligned}
\end{equation*}
where
\begin{itemize}
    \item [$\bullet$](dilaton leaf)
    \[
        (\check{\cL}^1)_k^\bsi = \frac{-1}{\sqrt{-2}}[z^{k-1}]\sum_{{\bsi'}\in I_\X}h_1^{\bsi'}\check{R}_{\bsi'}^{\spa\bsi}(z)
    \]
    \item[$\bullet$](descendant leaf)
    \[
        (\check{\cL}^\bp)_{k}^{\bsi}(l_j)=\frac{1}{\sqrt{-2}}\theta_\bsi^k(p_j)
    \]
    \item[$\bullet$](open leaf)
    \[
        (\check{\cL}^{\BoldEta})_{k}^{\bsi}(l_j)=\frac{1}{\sqrt{-2}}\int_{0}^{\eta_j}\rho_q^*\theta_\bsi^k
    \]
\end{itemize}
We have the following graph sum formula:
\begin{theorem}\rm\label{eqn:B-graph-sum}(\cite{DOSS})
    For $2g-2+n>0$, one has
    \begin{equation*}
        \begin{aligned}
            \omega_{g,n}(\bp) = \sum_{\vec{\Gamma}\in\bGa_{g,n}(\X)}\frac{w_B^\bp(\vec{\Gamma})}{|\Aut(\vec{\Gamma})|},
            \\
            \int_{0}^{\eta_1}\dots\int_{0}^{\eta_n}(\rho_q^{\times n})^*\omega_{g,n} = \sum_{\vec{\Gamma}\in\bGa_{g,n}(\X)}\frac{w_B^\BoldEta(\vec{\Gamma})}{|\Aut(\vec{\Gamma})|}.
        \end{aligned}
    \end{equation*}
\end{theorem}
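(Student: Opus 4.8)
The plan is to deduce the statement from the general identification of Eynard--Orantin topological recursion with Givental's quantization formula due to \cite{DOSS}, after checking that the spectral curve $(\overline{C}_q, x, y)$ with $x=-\log X$, $y=-\log Y$ and $1$-form $\Phi=y\,dx$ satisfies their hypotheses. First I would record that the second formula follows from the first: the weight $w_B^{\BoldEta}(\vec{\Gamma})$ differs from $w_B^{\bp}(\vec{\Gamma})$ only in the ordinary-leaf factors, where $(\check{\cL}^{\BoldEta})_{k}^{\bsi}(l_j)=\frac{1}{\sqrt{-2}}\int_0^{\eta_j}\rho_q^*\theta_\bsi^k$ is obtained from $(\check{\cL}^{\bp})_{k}^{\bsi}(l_j)=\frac{1}{\sqrt{-2}}\theta_\bsi^k(p_j)$ by applying $\int_0^{\eta_j}\rho_q^*(\cdot)$ in the $j$-th variable. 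Since each graph sum is a finite sum of products and the only dependence on $p_j$ sits in the $j$-th ordinary leaf, pulling back by $\rho_q^{\times n}$ and integrating termwise over $(D_\delta)^n$ converts the first formula into the second. Thus the entire content lies in the graph sum for $\omega_{g,n}(\bp)$ itself.

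For the first formula the key point is that the building blocks of $w_B^{\bp}$ are precisely those entering Givental's formula in \cite{DOSS}. The $2p$ ramification points $P_\bsi$ ($\bsi\in I_\X$) of $X$ (equivalently of $x$) are simple, with local coordinate $\zeta_\bsi$ satisfying $x=x_\bsi+\zeta_\bsi^2$ and $y=y_\bsi+\sum_{k\geq 1}h_k^\bsi\zeta_\bsi^k$; the nondegeneracy $h_1^\bsi\neq 0$, which I would verify directly for the curve \eqref{StandardMirror}, ensures that each ramification point carries an \emph{Airy} local model. The matrix $\check{R}(z)$, defined through the Laplace transform of $\theta_{\bsi'}^0$ along the Lefschetz thimbles $\gamma_\bsi$, is Givental's R-matrix for this curve, and the edge weight $\check{B}$ is its associated propagator, as encoded in the identity $\check{B}_{k,l}^{\bsi,\bsi'}=[z^k w^l]\frac{1}{z+w}\bigl(\delta_{\bsi,\bsi'}-\sum_{\bsi''}\check{R}_{\bsi''}^{\spa\bsi}(z)\check{R}_{\bsi''}^{\spa\bsi'}(w)\bigr)$ recorded in Section \ref{sec:EO-recursion}. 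Proposition \ref{prop:disk-R-matrix}, which gives $\theta_\bsi(z)=\sum_{\bsi'}\check{R}_{\bsi'}^{\spa\bsi}(z)\hat{\theta}_{\bsi'}(z)$, identifies the leaf decorations, and equation \eqref{eqn:C} fixes the normalization of the quadratic part of the annulus differential. These are exactly the data (R-matrix, dilaton shift governed by $h_1^\bsi$, and Witten--Kontsevich vertices $\langle\prod_{h}\tau_{k(h)}\rangle_{g(v)}$) out of which Givental's formula is assembled.

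With this dictionary in place, the proof of the first formula is the argument of \cite{DOSS}. I would first establish the local statement that at a single simple ramification point the recursion reproduces the Witten--Kontsevich descendant correlators, so that the pure-Airy amplitudes are $\langle\prod\tau_{d_i}\rangle_g$ times the leading singular parts of the $\theta_\bsi^{d}$. I would then induct on $2g-2+n$: assuming the graph sum representation for all $(g',n')$ with $2g'-2+n'<2g-2+n$, apply the recursion defining $\omega_{g,n}$ and evaluate the residue $\Res_{p\to P_\bsi}$ of the kernel $\frac{\int_{\xi=p}^{\bar{p}}B(p_n,\xi)}{2(\Phi(p)-\Phi(\bar{p}))}$ against the lower amplitudes. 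The effect is to attach the ordinary leaf $l_n=p_n$ to $P_\bsi$ and either join two lower-complexity graphs along a new edge or add a handle, the new edge being weighted by $\check{B}$ and the new vertex by a Witten--Kontsevich factor, while the $\check{R}(-z)$-dressing of the incident half-edges is produced by expanding the kernel through Proposition \ref{prop:disk-R-matrix}. Matching this reorganization against the combinatorics of $\bGa_{g,n}(\X)$ yields the graph sum.

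The main obstacle is exactly this residue-to-graph-sum matching: one must show that extracting $\Res_{p\to P_\bsi}$ of the kernel, expanded in $\zeta_\bsi$, assembles precisely the edge factor $\check{B}$ together with the $\check{R}(-z)$-dressed vertices and dilaton leaves, carrying the correct powers of $h_1^\bsi/\sqrt{-2}$ and the sign $(-1)^{g(\vec{\Gamma})-1}$. This is where the unitarity $\check{R}^T(-z)\check{R}(z)=\one$ and the symmetry of $\check{B}$ are needed to guarantee that the polar and regular parts of the kernel distribute correctly between the two half-edges, and where the double-factorial bookkeeping of the $\theta_\bsi^d$ must be tracked carefully. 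Once this local-to-global residue identity is established, the induction closes and both displayed equalities follow.
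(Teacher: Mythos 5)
Your proposal takes essentially the same route as the paper: the paper offers no independent proof of this theorem but simply cites \cite{DOSS}, i.e.\ it invokes precisely the identification of the Eynard--Orantin recursion with Givental's R-matrix graph-sum formula that you outline (with the hypotheses — simple ramification points, $h_1^{\bsi}\neq 0$, the Laplace-transform characterization of $\check{R}$, and Proposition \ref{prop:disk-R-matrix} supplying the leaf dictionary — holding for the curve \eqref{StandardMirror}), and the second displayed identity is indeed just the first with $\int_0^{\eta_j}\rho_q^*(\cdot)$ applied termwise to the ordinary-leaf factors. Your sketch of the residue-to-graph-sum induction is a faithful account of the argument in \cite{DOSS}, so the proposal is correct and coincides with the paper's approach.
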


\section{All genus open-closed mirror symmetry}
\label{sec:ms}

\subsection{Mirror symmetry for disk invariants}
In this section, we identify the disk potential $F^{\cX,\Lrs}_{0,1}(\btau; X)$ with the non-fractional part of the B-model disk potential for $\btau=\btau_2\in\HcrX{2}$
(i.e. the $\tau_0$ component of $\btau$ is set to be 0).
\par
Let $f\in\bC[[\eta_1,\dots, \eta_n]]$. For a fixed positive integer $r\in\mathbb{N}^*$, let $a$ be a primitive $r$-th root of unity. We define the operation
\[
    \fh_{\eta_1,\dots,\eta_n}\cdot f(\eta_1,\dots,\eta_n) = \sum_{k_1,\dots,k_n=0}^{r-1}\frac{
        f(a^{k_1}\eta_1,\dots, a^{k_n}\eta_n)
    }{r^n}.
\]
This operation just preserves the terms with degree of $\eta$ divisible by $r$. In particular, we get
\begin{equation*}
    \begin{aligned}
        \mathfrak{h}_{\eta}\cdot W_{0,1}(\eta,q) = \sum_{\mu>0}\sum\limits_{a_1,\dots,a_p,b\geq 0\atop {a_1\leq b \atop pb+\sum_{m=1}^{p-1}ma_{m+1}=r\mu}}^{\infty}
        &\frac{
            \prod_{i=1}^{\sum_{m=2}^{p}a_m + b - 1}\left(
                k\mu - a_1 -i
            \right)
        }{
            a_1!\dots a_p!(b-a_1)!r\mu
        }
        \\
        &\cdot \eta^{r\mu}q_1^{a_1}\dots q_p^{a_p}(-1)^{k\mu - a_1}.
    \end{aligned}
\end{equation*}

Recall that in Section \ref{sec:relative1}, we get Equation \eqref{eqn:disk-I-function}:
\begin{equation*}
    \begin{aligned}
        F^{\cX,\Lrs}_{0,1}(\btau&(q); X) = \sum_{h\in\cI}\sum_{\mu>0, \atop\AlphaMu=h}
            \frac{1}{\mu}\frac{
                    \prod_{j=1}^{\mu k-1}(-\mu k+j)
                }
                {
                    \lfloor\frac{\mu r}{p}\rfloor!\lfloor\frac{\mu s}{p}\rfloor!
                }X^{\mu}
        \\
        &\cdot\sum_{
                \beta\in\mathbb{K}_\eff\atop\langle w(\beta)\rangle = h
            } q_1^{a_1}\dots q_p^{a_p}
        \frac{
                \prod_{m=-a_1+\lceil -w(\beta)\rceil}^{-1}(\frac{\mu r}{p}-(a_1+\wbeta+m))
            }{
                \prod_{m=0}^{a_1-1}(-\mu k+a_1 -m)
            }
        \\
        &\cdot\frac{
            \prod_{m=-a_1-a_2-\dots-a_p +\lceil w(\beta)\rceil}^{-1}(\frac{\mu s}{p}-(a_1+\dots+a_p+m)+\wbeta)}
        {a_1!a_2!\dots a_p!}
    \end{aligned}
\end{equation*}
\begin{equation*}
    \begin{aligned}
        = \sum_{h\in\cI}&\sum_{\mu>0, \atop\AlphaMu=h}
            \frac{1}{\mu}\frac{
                    \prod_{j=1}^{\mu k-1}(-\mu k+j)
                }
                {
                    \lfloor\frac{\mu r}{p}\rfloor!
                    (\mu k - \lfloor\frac{\mu r}{p}\rfloor - 1 +\delta_{\AlphaMu,0})!
                }X^{\mu}
        \\
        &\cdot\sum\limits_{
                a_1,\dots,a_p, l\geq 0,\atop w(\beta) = l + h
            } q_1^{a_1}\dots q_p^{a_p}
        \frac{
                \prod_{m=1}^{a_1+l}(\lfloor\frac{\mu r}{p}\rfloor-(a_1+l)+m)
            }{
                \prod_{m=0}^{a_1-1}(-\mu k+a_1 -m)
            }
        \\
        &\cdot\frac{
            \prod_{m=1}^{a_1+a_2+\dots+a_p-l+\delta_{h,0}-1}
            (\mu k - \lfloor\frac{\mu r}{p}\rfloor+l-(a_1+\dots+a_p)+m)}
        {a_1!a_2!\dots a_p!}
    \end{aligned}
\end{equation*}
\begin{equation*}
    \begin{aligned}
        = \sum_{h\in\cI}\sum_{\mu>0, \atop\AlphaMu=h}
            \sum\limits_{
                a_1,\dots,a_p, l\geq 0,\atop w(\beta) = l + h
            }
            &\frac{X^{\mu}}{\mu}
            \frac{
                q_1^{a_1}\dots q_p^{a_p}
            }{
                a_1!\dots a_p!
            }
            \\
            \cdot&\frac{
                    \prod_{j=1}^{\mu k-a_1-1}(-\mu k+a_1+j)
                }
                {
                    (\lfloor\frac{\mu r}{p}\rfloor-a_1-l)!
                    (\mu k - \lfloor\frac{\mu r}{p}\rfloor +l - a_1-\dots -a_p)!
                }
    \end{aligned}
\end{equation*}
\begin{equation*}
    \begin{aligned}
        = \sum_{h\in\cI}\sum_{\mu>0, \atop\AlphaMu=h}
            \sum\limits_{
                a_1,\dots,a_p, l\geq 0,\atop w(\beta) = l + h
            }
            &(-1)^{\mu k-a_1-1}\frac{X^{\mu}}{\mu}
            \\
            \cdot&\frac{
                q_1^{a_1}\dots q_p^{a_p}
            }{
                a_1!\dots a_p!
            }
            \frac{
                    \prod_{j=1}^{\sum_{i=2}^{p}a_i + \lfloor\frac{\mu r}{p}\rfloor -l-1}
                    (\mu k-a_1-j)
                }
                {
                    (\lfloor\frac{\mu r}{p}\rfloor-a_1-l)!
                }.
    \end{aligned}
\end{equation*}
Setting $b= \lfloor\frac{\mu r}{p}\rfloor -l$, we have
\begin{equation*}
    \begin{aligned}
        F^{\cX,\Lrs}_{0,1}(\btau(q); X) = \sum_{\mu>0}\sum\limits_{a_1,\dots,a_p,b\geq 0\atop {a_1\leq b \atop pb+\sum_{m=1}^{p-1}ma_{m+1}=r\mu}}^{\infty}
        &(-1)^{k\mu - a_1-1}X^{\mu}
        \frac{
            q_1^{a_1}\dots q_p^{a_p}
        }{
            a_1!\dots a_p!
        }
        \\
        \cdot&\frac{
            \prod_{j=1}^{\sum_{m=2}^{p}a_m + b - 1}\left(
                \mu k- a_1 -j
            \right)
        }{
            (b-a_1)!\mu
        }.
    \end{aligned}
\end{equation*}
Therefore, by the identification $X= \eta^r$, we have
\begin{theorem}[Disk mirror theorem for $\Lrs$]\label{thm:disk-mirror-theorem}
    \[
        F^{\cX,\Lrs}_{0,1}(\btau(q); X) = -r(\mathfrak{h}_{\eta}\cdot W_{0,1})(\eta,q).
    \]
\end{theorem}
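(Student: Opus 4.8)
The plan is to prove the identity by matching, coefficient by coefficient, the two explicit power series that the preceding computation has already produced for the two sides. On the B-model side the operator $\mathfrak{h}_\eta$ retains exactly the monomials of $W_{0,1}(\eta,q)$ whose $\eta$-degree is divisible by $r$, and the displayed formula for $\mathfrak{h}_\eta\cdot W_{0,1}$ expresses its degree-$r\mu$ coefficient as a sum over tuples $(a_1,\dots,a_p,b)$ subject to $pb+\sum_{m=1}^{p-1}ma_{m+1}=r\mu$ and $a_1\le b$. On the A-model side, Theorem \ref{thm:mirror theorem} and the localization formula of Theorem \ref{thm:localization} have been combined into \eqref{eqn:disk-I-function}, and the subsequent chain of equalities rewrites $F^{\cX,\Lrs}_{0,1}(\btau(q);X)$ as a sum over the \emph{same} index set. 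The theorem is then the assertion that, under $X=\eta^r$, these two series agree up to the overall scalar $-r$.

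The substantive content lies in bringing the A-model series into the B-model shape, which is the purpose of the chain of identities preceding the statement. First I would convert the Pochhammer-type products in \eqref{eqn:disk-I-function} into factorials, using the arithmetic identity $\lfloor \tfrac{\mu r}{p}\rfloor + \lfloor \tfrac{\mu s}{p}\rfloor = \mu k - 1 + \delta_{\AlphaMu,0}$, which follows from $r+s=pk$ together with the pairwise coprimality of $r,s,p$; this collapses the denominators coming from the divisors $\bar{D}_2,\bar{D}_4$ and $\bar{D}_3$. Next I would eliminate the Box-sector bookkeeping by replacing the summation variable $l$ (with $w(\beta)=l+h$) by $b:=\lfloor \tfrac{\mu r}{p}\rfloor - l$; this single substitution absorbs the residue constraint $\AlphaMu=h$ into the clean Diophantine condition $pb+\sum_m m a_{m+1}=r\mu$ and turns the floor-valued ranges into the bound $a_1\le b$.

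With both series indexed identically, the final step is to read off the overall factor. Under $X=\eta^r$ one has $X^\mu=\eta^{r\mu}$, and term by term the products $\prod(\mu k-a_1-j)$, the factorials $a_1!\cdots a_p!(b-a_1)!$, and the monomials $q_1^{a_1}\cdots q_p^{a_p}$ coincide; the A-model carries the prefactor $1/\mu$ and sign $(-1)^{k\mu-a_1-1}$, whereas the B-model carries $1/(r\mu)$ and $(-1)^{k\mu-a_1}$. Their ratio is therefore the constant $-r$, uniformly in $(\mu,a_1,\dots,a_p,b)$, which yields $F^{\cX,\Lrs}_{0,1}(\btau(q);X) = -r\,(\mathfrak{h}_\eta\cdot W_{0,1})(\eta,q)$.

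The main obstacle is not this last comparison, which is pure bookkeeping, but the uniform validity of the intermediate reduction across all residue classes $h\in\cI$. The delicate points are the ceiling data $\lceil\pm w(\beta)\rceil$ and the $\delta_{\AlphaMu,0}$ corrections in the boundary case $h=0$, together with the need to check that the re-indexing $b=\lfloor \tfrac{\mu r}{p}\rfloor - l$ is a bijection onto the admissible range and neither creates nor destroys terms at the endpoints of the summation. Once these edge cases are verified, the identity follows.
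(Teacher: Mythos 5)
Your proposal is correct and follows essentially the same route as the paper: combining Theorem \ref{thm:localization} with the mirror theorem to get \eqref{eqn:disk-I-function}, converting the Pochhammer products to factorials via $\lfloor\tfrac{\mu r}{p}\rfloor+\lfloor\tfrac{\mu s}{p}\rfloor=\mu k-1+\delta_{\AlphaMu,0}$, re-indexing by $b=\lfloor\tfrac{\mu r}{p}\rfloor-l$, and matching the resulting series against the explicit expansion of $\mathfrak{h}_\eta\cdot W_{0,1}$ to read off the factor $-r$. This is precisely the chain of identities the paper carries out before stating the theorem, including the edge-case bookkeeping for $h=0$ that you flag.
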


\subsection{Mirror symmetry for annulus invariants}

\subsubsection{Identification of open leaves}
We define
\[
U(z)(\btau,X):= \sum_{\bsi'\in I_\cX} \txi^{\bsi'}
  (z,X)S(1,\phi_{\bsi'})\big|_{\mathfrak Q=1\atop \sw_i=w_i\sv}.
\]
By Proposition \ref{prop:a-potential},
\begin{align*}
F^{\cX,\Lrs}_{0,1}(\btau, X) &= [z^{-2}] \sum_{\bsi'\in I_\cX} \txi^{\bsi'}(z,X)S(1,\phi_{\bsi'})\big|_{\mathfrak Q=1\atop \sw_i=w_i\sv},\\
(X\frac{d}{dX})^2F^{\cX,\Lrs}_{0,1}(\btau,X) &=[z^0] \sum_{\bsi'\in I_\cX} \txi^{\bsi'}(z,X)S(1,\phi_{\bsi'})\big|_{\mathfrak Q=1\atop \sw_i=w_i\sv} \sv^2.
\end{align*}
Since $F^{\cX,\Lrs}_{0,1}=-r(\mathfrak h_\eta \cdot W_{0,1})(\eta,q)$ (Theorem \ref{thm:disk-mirror-theorem}), as power series in $X$,
\begin{align}
  [z^0](\sv^2 U(z)(\btau,X))&=(\frac{\eta d}{r d\eta}) (\mathfrak h_\eta\cdot \rho^*_q)(v)\nonumber \\
\left(\sv^2 U(z)(\btau,X)\right)_{\geq -1}&=\sum_{n\geq -1} (\frac{z}{\sv})^n (\frac{\eta d}{rd\eta})^{n+1}(\mathfrak h_\eta \cdot \rho_q^*)(v)\label{eqn:z-disk}\\
[z^{-2}] U(z)(\btau,X)&= -r(\mathfrak{h}_{\eta}\cdot W_{0,1})(\eta,q)\nonumber.
\end{align}
Here $()_{\geq a}$ truncates the $z$-series, keeping terms whose power is greater or equal than $a$. We usually denote $()_{\geq 0}$ by $()_+$.

Let $H_1=\bar H^{\bT}$ and $H_a=\one_{b_{a+3}}$ for $a\geq 2$. Let $\{H_{a_i}\star_t H_{b_i}\}_{1\leq i\leq p-1}$ be a basis of $H^4_{\CR,\bT}(\cX)$.
If we write the normalized canonical basis by flat basis
\[
\hat \phi_{\bsi}(\btau(q))=\sum_{i=1}^{p-1}\hat A^i_\bsi(q)H_{a_i}\star_t H_{b_i}-\sum_{a=1}^{p}\hat B^a_\bsi(q)H_{a}+\hat C_\bsi(q)\one,
\]
then
\[
\sum_{\bsi'\in I_\cX} \txi^{\bsi'} (z,X) S(\hat \phi_\bsi(\btau(q)), \phi_{\bsi'})\big|_{\mathfrak Q=1\atop \sw_i=w_i\sv}=\sum_{i=1}^{p-1}z^2\hat A^i_\bsi(q) \frac{\partial^2 U}{\partial \tau_{a_i}\partial \tau_{b_i}}-\sum_{a=1}^{p}z\hat B^a_\bsi(q) \frac{\partial U}{\partial \tau_a}+ \hat C_\bsi(q) U.
\]
Here we use the fact that $S( H_a,\phi_{\bsi'})=z\frac{\partial S(1, \phi_{\bsi'})}{\partial \tau_a}$ and $S( H_{a_i}\star_t H_{b_i},\phi_{\bsi'})=z^2\frac{\partial^2 S(1, \phi_{\bsi'})}{\partial \tau_{a_i}\partial \tau_{b_i}}$.
\\
The following proposition is proved in \cite[Section 4.4 and Proposition 6.3]{FLZ16}.
\begin{prop}
\label{prop:canonical-in-flat}
We have
$$
-\frac{\theta^0_{\bsi}}{\sqrt{-2}}=\sum_{i=1}^{p-1}\hat A^i_\bsi(q)\vert_{\sv=1} \frac{\partial^2 \Phi}{\partial \tau_{a_i}\partial \tau_{b_i}}+\sum_{a=1}^{p}\hat B^a_\bsi(q)\vert_{\sv=1} d(\frac{\frac{\partial \Phi}{\partial \tau_a}}{d x})
+ \hat C_\bsi(q)\vert_{\sv=1} d(\frac{dy}{dx}).
$$
\end{prop}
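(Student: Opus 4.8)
The plan is to deduce the identity from the genus-zero mirror symmetry already in place, by recognizing both sides as the image of a single canonical-basis element under the mirror map, written in the B-model flat structure on $\overline{C}_q$. The left-hand side $-\theta^0_\bsi/\sqrt{-2}$ is the B-model datum attached to the ramification point $P_\bsi$, whereas the right-hand side is an A-model-weighted combination of variations of the Liouville form $\Phi=ydx$; crucially, the coefficients $\hat A^i_\bsi,\hat B^a_\bsi,\hat C_\bsi$ are fixed purely on the A-model side, as the expansion of $\hat\phi_\bsi(\btau(q))$ in the flat basis $\{\one,\,H_a,\,H_{a_i}\star_t H_{b_i}\}$.

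First I would take as input the display immediately preceding the proposition, which rewrites the paired open-leaf sum
\[
\sum_{\bsi'\in I_\cX}\txi^{\bsi'}(z,X)\,S(\hat\phi_\bsi(\btau(q)),\phi_{\bsi'})\big|_{\mathfrak Q=1,\ \sw_i=w_i\sv}=\sum_{i=1}^{p-1}z^2\hat A^i_\bsi\,\frac{\partial^2 U}{\partial\tau_{a_i}\partial\tau_{b_i}}-\sum_{a=1}^{p}z\hat B^a_\bsi\,\frac{\partial U}{\partial\tau_a}+\hat C_\bsi\,U,
\]
which uses the divisor relations $S(H_a,\phi_{\bsi'})=z\,\partial_{\tau_a}S(1,\phi_{\bsi'})$ and $S(H_{a_i}\star_t H_{b_i},\phi_{\bsi'})=z^2\,\partial^2_{\tau_{a_i}\tau_{b_i}}S(1,\phi_{\bsi'})$. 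Thus the entire open-leaf datum is packaged by three families of $z$-weighted $\tau$-derivatives of the single disk series $U$.

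Second I would move everything to the B-model side. By Theorem \ref{thm:disk-mirror-theorem} and the expansion \eqref{eqn:z-disk}, the $z$-series $\sv^2U(z)(\btau,X)$ is, after the projection $\fh_\eta$ onto $\eta$-degrees divisible by $r$, the generating series of the operators $(\tfrac{\eta\,d}{r\,d\eta})^{n+1}$ applied to $(\fh_\eta\cdot\rho_q^*)(v)$. Differentiating the disk mirror theorem in the flat coordinates $\tau_a$ then turns the A-model $\tau$-derivatives of $U$ into variations of the B-model data; since $\Phi=ydx$ and the coordinates $(x,y)$ are the $SL(2,\bZ)$-transforms of $(u,v)$ from \eqref{eqn:coord2}, the $z$-powers $z^0,z^1,z^2$ combined with the fractional coordinate $\eta=X^{1/r}$ and the exterior derivative produce exactly the three right-hand terms $d(\tfrac{dy}{dx})$, $d(\tfrac{\partial\Phi/\partial\tau_a}{dx})$, and $\partial^2_{\tau_{a_i}\tau_{b_i}}\Phi$, the second derivative being already a genuine meromorphic $1$-form while the lower orders acquire the regularizing operation $d(\cdot/dx)$.

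Third I would identify the left-hand side and match. Proposition \ref{prop:disk-R-matrix} gives $\theta_\bsi(z)=\sum_{\bsi'}\check R^{\spa\bsi}_{\bsi'}(z)\hat\theta_{\bsi'}(z)$, the B-model mirror of the A-model relation $\tS=\Psi R(z)e^{U/z}$; the closed mirror theorem (Theorem \ref{thm:mirror theorem}) together with the matching of the disk data forces $\check R=R$ under the mirror map and $\sw_i=w_i\sv$. Pairing the A-model expression against $\txi^{\bsi'}$, extracting the appropriate $z$-coefficient, and specializing $\sv=1$ then isolates $-\theta^0_\bsi/\sqrt{-2}$, with the normalization $1/\sqrt{-2}$ being the one fixed for the B-model leaves in Section \ref{sec:EO-recursion}. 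The main obstacle is precisely this last identification: establishing $\check R=R$ is the genus-zero spectral-curve mirror theorem, and even granting it one must track the factors $\sqrt{-2}$, the powers of $z$ and $\sv$, and the role of $\eta=X^{1/r}$ (where $\fh_\eta$ enters) so that the three flat-basis terms emerge with exactly the coefficients $\hat A^i_\bsi,\hat B^a_\bsi,\hat C_\bsi$. This is the computation carried out in \cite[Section 4.4 and Proposition 6.3]{FLZ16}, which adapts essentially verbatim to the present $\bP(r,s,p)$/lens-space geometry.
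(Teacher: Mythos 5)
Your proposal runs the paper's logic backwards, and this creates a genuine circularity. The identification $\check R^{\spa\bsi}_{\bsi'}(z)=R^{\spa\bsi}_{\bsi'}(-z)$ under the mirror map (equation \eqref{eqn:R-identification}) is not proved in this paper and does not, as you assert, "follow from the closed mirror theorem together with the matching of the disk data"; it is imported from \cite[Theorem 7.1]{FLZ16}, and in that reference its proof \emph{uses} the flat-basis expansion of $\theta^0_\bsi$ --- i.e.\ precisely the statement you are trying to prove (their Proposition 6.3) --- to show that the Laplace transforms of the forms $\theta^0_\bsi$ along Lefschetz thimbles assemble into a fundamental solution of the $\bT$-equivariant QDE with the correct asymptotics, whence $\check R=R$ by uniqueness of the $R$-matrix. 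So invoking $\check R=R$ here assumes the conclusion. The paper itself makes no such attempt: Proposition \ref{prop:canonical-in-flat} is simply quoted from \cite[Section 4.4 and Proposition 6.3]{FLZ16}, where it is proved by a direct B-model argument --- pole and residue analysis of the global meromorphic forms $\frac{\partial^2\Phi}{\partial\tau_{a_i}\partial\tau_{b_i}}$, $d\bigl(\frac{\partial\Phi/\partial\tau_a}{dx}\bigr)$, $d\bigl(\frac{dy}{dx}\bigr)$, combined with the genus-zero \emph{closed} mirror theorem identifying B-model periods with A-model flat coordinates --- independently of any $R$-matrix statement and of the disk mirror theorem. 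In the paper the implication then runs Proposition \ref{prop:canonical-in-flat} $\Rightarrow$ \eqref{eqn:b-canonical} $\Rightarrow$ \eqref{eqn:xi-in-disk} $\Rightarrow$ open-leaf matching, which is exactly the chain you propose to traverse in reverse.

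Even granting $\check R=R$, your route cannot yield the statement as written, because it proves too little. The proposition is an identity of meromorphic $1$-forms on all of $\overline{C}_q$. Every step of your argument passes through the disk mirror theorem (Theorem \ref{thm:disk-mirror-theorem}) and hence through the projection $\mathfrak{h}_\eta$, which retains only the terms of the expansion at $\mathfrak{s}_0$ whose $\eta$-degree is divisible by $r$; the A-model side involves only integer powers of $X=\eta^r$ and can never detect more. Since $\eta\mapsto a\eta$ (for $a$ a primitive $r$-th root of unity) is \emph{not} an automorphism of $C_q$ --- the expansion of $V(\eta)$ contains all powers of $\eta$ --- two distinct global $1$-forms can have identical $\mathfrak{h}_\eta$-projected primitives at $\mathfrak{s}_0$. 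Matching projected expansions at one point therefore establishes only the weaker, projected identity (essentially \eqref{eqn:xi-in-disk}), not the unprojected identity of forms asserted by the proposition; no analytic-continuation argument can close this gap, because the lost, non-$r$-divisible part is genuinely invisible to the data you use.
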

\noindent
From Proposition \ref{prop:canonical-in-flat},
\begin{align}
\label{eqn:b-canonical}
\nonumber -\frac{\theta^0_{\bsi}}{\sqrt{-2}}&=\sum_{i=1}^{p-1}\hat A^i_\bsi(q)\vert_{\sv=1} \frac{\partial^2 \Phi}{\partial \tau_{a_i}\partial \tau_{b_i}}+\sum_{a=1}^{p}\hat B^a_\bsi(q)\vert_{\sv=1} d(\frac{\frac{\partial \Phi}{\partial \tau_a}}{d x})
+ \hat C_\bsi(q)\vert_{\sv=1} d(\frac{dy}{dx})\\
&=\frac{1}{r}\left(r\sum_{i=1}^{p-1}\hat A^i_\bsi(q)\vert_{\sv=1} \frac{\partial^2 v}{\partial \tau_{a_i}\partial \tau_{b_i}}(-\frac{d\eta}{\eta})+\sum_{a=1}^{p}\hat B^a_\bsi(q)\vert_{\sv=1} d(\frac{\partial v}{\partial \tau_a})
+ \hat C_\bsi(q)\vert_{\sv=1} d(\frac{dv}{dx})\right).
\end{align}
By Equation \eqref{eqn:z-disk} and \eqref{eqn:b-canonical}, under the open-closed mirror map

\begin{align*}
&-\mathfrak h_\eta \cdot \int_0^\eta \frac{\rho^*_q \theta_\bsi^0}{\sqrt{-2}}
\\
&=\frac{1}{r} \left(r\sum_{i=1}^{p-1}\hat A^i_\bsi(q)\vert_{\sv=1} \mathfrak h_\eta \cdot\int_0^\eta\frac{\partial^2 \rho^*_q v}{\partial \tau_{a_i}\partial \tau_{b_i}}(-\frac{d\eta}{\eta})+\sum_{a=1}^{p}\hat B^a_\bsi(q)\vert_{\sv=1}(\frac{\partial (\mathfrak h_\eta \cdot \rho^*_q v)}{\partial \tau_a}) +\hat C_\bsi(q)\vert_{\sv=1} \frac{d(\mathfrak h_\eta \cdot \rho^*_q v)}{dx}\right)\\
&=\frac{1}{r}\left(r\sum_{i=1}^{p-1}\hat A^i_\bsi(q)\vert_{\sv=1} \mathfrak h_\eta \cdot\int_0^\eta\frac{\partial^2 \rho^*_q v}{\partial \tau_{a_i}\partial \tau_{b_i}}(-\frac{d\eta}{\eta})+\sum_{a=1}^{p}\hat B^a_\bsi(q)\vert_{\sv=1}(\frac{\partial (\mathfrak h_\eta \cdot \rho^*_q v)}{\partial \tau_a}) +\hat C_\bsi(q)\vert_{\sv=1}(-\frac{\eta d}{rd\eta})(\mathfrak h_\eta\cdot \rho_q^* v))\right)\\
&=-\frac{1}{r}\left(\sum_{i=1}^{p-1}[z^{-2}]\hat A^i_\bsi(q)\vert_{\sv=1} \frac{\partial^2 U}{\partial \tau_{a_i}\partial \tau_{b_i}}-\sum_{a=1}^{p}[z^{-1}]\hat B^a_\bsi(q)\vert_{\sv=1} \frac{\partial U}{\partial \tau_a}+\hat C_\bsi(q)\vert_{\sv=1}[z^0] U(z)(\btau,X)\right)_{\sv=1},
\end{align*}
Therefore
\begin{align}
  \label{eqn:xi-in-disk}
  \mathfrak h_\eta \cdot \int_0^\eta \frac{\rho^*_q \hat \theta_\bsi(z)}{\sqrt{-2}}&=\sum_{a\geq 0} z^a (X\frac{d}{dX})^a (\mathfrak h_\eta \cdot \int_0^\eta \frac{\rho^*_q \theta_\bsi^0}{\sqrt{-2}})\\
  &=\frac{1}{r}(\sum_{i=1}^{p-1}z^{2}\hat A^i_\bsi(q) \frac{\partial^2 U_{\geq -2}}{\partial \tau_{a_i}\partial \tau_{b_i}}-\sum_{a=1}^{p}z\hat B^a_\bsi(q) \frac{\partial U_{\geq -1}}{\partial \tau_a}+\hat C_\bsi(q)U_{\geq 0})\nonumber\\
  &=\frac{1}{r}\left(\sum_{\bsi'\in I_\cX} \txi^{\bsi'}(z,X)
  S( \hat\phi_{\bsi}(\btau(q)),\phi_{\bsi'})\big|_{\mathfrak
    Q=1,\sv=1}\right)_+.\nonumber
\end{align}

\subsubsection{Mirror symmetry for annulus invariants}
\begin{theorem}[Annulus mirror theorem]
  \label{thm:main-theorem-annulus}
\[
F^{\cX,\Lrs}_{0,2}(\btau; X_1,X_2)|_{\btau=\btau(q),X_i=\eta_i^r}=-r^2 \mathfrak h_{\eta_1,\eta_2}
\cdot W_{0,2}(\eta_1,\eta_2,q).
\]
\end{theorem}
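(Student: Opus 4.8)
The plan is to prove the annulus identity by reducing both the A- and B-model annulus amplitudes to the \emph{same} bilinear expression built from the genus-zero disk data, the annulus being the unique ``edge'' joining two open leaves. Because $(g,n)=(0,2)$ is unstable, neither graph sum formula (Theorem \ref{thm:a graph sum}, Theorem \ref{eqn:B-graph-sum}) applies, so I would treat this case by hand. The two structural inputs are the two-in-one identity \eqref{eqn:two-in-one}, which expresses the genus-zero two-point function as a bilinear in the $\cS$-operator, and the open-leaf identification \eqref{eqn:xi-in-disk} extracted from the disk mirror theorem (Theorem \ref{thm:disk-mirror-theorem}), which converts an $\cS$-operator factor contracted against $\txi$ into the pulled-back $B$-model bare open leaf $\mathfrak{h}_\eta\cdot\int_0^\eta \rho_q^*\hat\theta_\bsi(z)/\sqrt{-2}$.

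First I would expand the A-model side. By Proposition \ref{prop:a-potential}(2), $F^{\cX,\Lrs}_{0,2}(\btau;X_1,X_2)-F^{\cX,\Lrs}_{0,2}(0;X_1,X_2)$ is the $[z_1^{-1}z_2^{-1}]$-coefficient of $\sum_{\alpha_1,\alpha_2}\ll \frac{\phi_{0,\alpha_1}}{z_1-\hat\psi_1},\frac{\phi_{0,\alpha_2}}{z_2-\hat\psi_2}\gg_{0,2}^{\cX,\T}\txi^{\alpha_1}(z_1,X_1)\txi^{\alpha_2}(z_2,X_2)$. Applying \eqref{eqn:two-in-one} at $t=\btau$ with the self-dual quantum normalized canonical basis $\{\hat\phi_\bsi(\btau)\}$ rewrites this two-point function as
\[
\frac{1}{z_1+z_2}\Big(\sum_{\bsi}S_{z_1}(\hat\phi_\bsi(\btau),\phi_{0,\alpha_1})S_{z_2}(\hat\phi_\bsi(\btau),\phi_{0,\alpha_2})-(\phi_{0,\alpha_1},\phi_{0,\alpha_2})\Big).
\]
Then I would contract with the $\txi$'s and set $\Theta_\bsi(z,X):=\sum_\alpha \txi^\alpha(z,X)\,S_z(\hat\phi_\bsi(\btau),\phi_{0,\alpha})\big|_{\fQ=1,\sw_i=w_i\sv}$. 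The disk identification \eqref{eqn:xi-in-disk} gives $(\Theta_\bsi(z,X))_+ = r\,\mathfrak{h}_\eta\cdot\int_0^\eta \rho_q^*\hat\theta_\bsi(z)/\sqrt{-2}$, so each factor is, up to the $z$-truncation, a $B$-model open leaf.

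On the B-model side I would pull back $\omega_{0,2}=B(p_1,p_2)$, subtract the universal double pole, and use the decomposition of the Bergman kernel at the ramification points into the $\hat\theta$-forms with edge coefficients $\check B_{k,l}^{\bsi,\bsi'}$ — the $(0,2)$ analogue of the reconstruction underlying Theorem \ref{eqn:B-graph-sum} and Proposition \ref{prop:disk-R-matrix}. Integrating in $\eta_1,\eta_2$ and applying $\mathfrak{h}_{\eta_1,\eta_2}$ turns $-r^2W_{0,2}$ into a bilinear in the same open-leaf integrals with edge $\check B_{k,l}^{\bsi,\bsi'}=[z^kw^l]\frac{1}{z+w}\big(\delta_{\bsi\bsi'}-\sum_{\bsi''}\check R_{\bsi''}^{\spa\bsi}(z)\check R_{\bsi''}^{\spa\bsi'}(w)\big)$. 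Under the identification of the A- and B-model $R$-matrices ($\check R_{\bsi'}^{\spa\bsi}(z)=R_{\bsi'}^{\spa\bsi}(-z)$, the correspondence that already matches the higher-genus graph-sum edges $\cE$ and $\check B$), the $R$-dressing carried by the $\Theta_\bsi$ factors together with the propagator $\frac{1}{z_1+z_2}$ reassembles precisely into $\check B$ acting on the bare open leaves. Matching term by term then identifies the two annuli, with the factor $r^2$ supplied by the two open leaves and the sign $-1=(-1)^{0-1}$ by the $B$-model normalization.

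The most delicate step, and the main obstacle, is the simultaneous bookkeeping of the $\frac{1}{z_1+z_2}$ propagator, the $(\cdot)_+$ truncations, and the two unstable subtraction terms. Each $\Theta_\bsi(z,X)$ carries negative powers of $z$ (down to $z^{-2}$, since $[z^{-2}]U=F^{\cX,\Lrs}_{0,1}$), whereas \eqref{eqn:xi-in-disk} controls only its nonnegative part; I must show that after the $[z_1^{-1}z_2^{-1}]\frac{1}{z_1+z_2}$ extraction the negative-$z$ contributions reassemble exactly into $F^{\cX,\Lrs}_{0,2}(0;X_1,X_2)$ on the A-side and into the subtracted diagonal $\frac{d\eta_1 d\eta_2}{(\eta_1-\eta_2)^2}$ on the B-side. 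The clean way to pin this down is to differentiate: applying $(X_1\frac{\partial}{\partial X_1}+X_2\frac{\partial}{\partial X_2})$ on the A-side and $-(\frac{\partial}{\partial x(p_1)}+\frac{\partial}{\partial x(p_2)})$ on the B-side removes the integration ambiguity and reduces the constant-term matching to the identity \eqref{eqn:C}, $C(p_1,p_2)=\frac12\sum_\bsi\theta_\bsi^0(p_1)\theta_\bsi^0(p_2)$, paired with the explicit formula for $(X_1\frac{\partial}{\partial X_1}+X_2\frac{\partial}{\partial X_2})F^{\cX,\Lrs}_{0,2}(0;X_1,X_2)$ in Proposition \ref{prop:a-potential}(2). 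Integrating back and checking that both sides vanish to the same order as $X_1,X_2\to 0$ fixes the constants and completes the proof.
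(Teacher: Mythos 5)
Your concluding ``clean way'' is precisely the paper's proof: the paper applies the Euler operator $(\eta_1\frac{\partial}{r\partial\eta_1}+\eta_2\frac{\partial}{r\partial\eta_2})$ to $W_{0,2}$, uses \eqref{eqn:C} to write the result as $\frac{1}{2}\sum_{\bsi}(\mathfrak{h}_{\eta_1}\cdot\int_0^{\eta_1}\rho_q^*\theta^0_\bsi)(\mathfrak{h}_{\eta_2}\cdot\int_0^{\eta_2}\rho_q^*\theta^0_\bsi)$, converts each factor via the disk identification \eqref{eqn:xi-in-disk}, reassembles the bilinear $S$-sum into $V$ by the two-in-one identity \eqref{eqn:two-in-one} (whose pairing term supplies exactly the $(X_1\frac{\partial}{\partial X_1}+X_2\frac{\partial}{\partial X_2})F^{\cX,\Lrs}_{0,2}(0;X_1,X_2)$ contribution from Proposition \ref{prop:a-potential}(2)), and then integrates back using vanishing at $X_1=X_2=0$. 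Your intermediate Bergman-kernel/$\check{B}$/$R$-matrix matching route is an unneeded detour (Theorem \ref{eqn:B-graph-sum} only covers $2g-2+n>0$), but since you discard it in favor of the differentiation argument, your proposal is correct and follows essentially the same approach as the paper.
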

\begin{proof}
By Equation \eqref{eqn:C}, we have
\begin{align*}
& \mathfrak h_{\eta_1,\eta_2}\cdot\left ((\eta_1\frac{\partial}{r\partial \eta_1} + \eta_2\frac{\partial}{r\partial \eta_2})W_{0,2}(q;\eta_1,\eta_2)\right)\\
=& \mathfrak h_{\eta_1,\eta_2}\cdot \left (\int_0^{\eta_1}\int_0^{\eta_2}(\rho_q^{\times 2})^*C\right )=\frac{1}{2}\sum_{\bsi\in I_\cX} (\mathfrak h_{\eta_1} \cdot \int_0^{\eta_1}\rho_q^* \theta_\bsi^0)(\mathfrak h_{\eta_2}\cdot \int_0^{\eta_2}\rho_q^* \theta_\bsi^0 ).
\end{align*}
By Equation \eqref{eqn:xi-in-disk},
\begin{align*}
& \mathfrak h_{\eta_1,\eta_2}\cdot \left (\int_0^{\eta_1}\int_0^{\eta_2}(\rho_q^{\times 2})^*C\right )=\frac{1}{2}\sum_{\bsi\in I_\cX} (\mathfrak h_{\eta_1} \cdot \int_0^{\eta_1}\rho_q^* \theta_\bsi^0)(\mathfrak h_{\eta_2}\cdot \int_0^{\eta_2}\rho_q^* \theta_\bsi^0 )\\
=&-\frac{1}{r^2}[z_1^{0}z_2^{0}] \sum_{\bsi,\bsi',\bsi''\in I_\cX} \txi^{\bsi''}(z_1,X_1)
   \txi^{\bsi'}(z_2,X_2) S(\hat\phi_{\bsi}(\btau),\phi_{\bsi'})  S(\hat\phi_{\bsi}(\btau),\phi_{\bsi''})\big|_{\btau=\btau(q), \fQ=1,\sv=1}\\
=&-\frac{1}{r^2} [z_1^{0}z_2^{0}](z_1+z_2) \sum_{\bsi',\bsi''\in I_\cX} V( \phi_{\bsi'},\phi_{\bsi''})\big|_{\btau=\btau(q), \fQ=1,\sv=1}\txi^{\bsi'}(z_1,X_1) \txi^{\bsi''}(z_2,X_2)\\
=& -\frac{1}{r^2}(X_1\frac{\partial}{\partial X_1} + X_2\frac{\partial}{\partial X_2})F^{\cX,\Lrs}_{0,2}(\btau; X_1,X_2).
\end{align*}

\end{proof}

\subsection{Mirror symmetry for $2g-2+n>0$}
\begin{theorem}[Mirror theorem for $2g-2+n>0$]
  \label{thm:main-theorem-stable}
$$F^{\cX,\Lrs}_{g,n}(\btau; X_1,\dots,X_n)|_{\btau=\btau(q),X_i=\eta_i^r}=(-1)^{g-1} r^n (\mathfrak h \cdot
W_{g,n})(\eta_1,\dots,\eta_n,q).$$
\end{theorem}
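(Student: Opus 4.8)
The plan is to prove the identity \emph{graph by graph}, exploiting that the two sides are already expressed as sums over the \emph{same} index set of decorated stable graphs $\bGa_{g,n}(\cX)$: the A-model amplitude by Corollary \ref{eqn:graph-open} (a consequence of Theorem \ref{thm:Zong}), and the B-model invariant $W_{g,n}=\int_0^{\eta_1}\cdots\int_0^{\eta_n}(\rho_q^{\times n})^*\omega_{g,n}$ by Theorem \ref{eqn:B-graph-sum}. Since $\fh=\fh_{\eta_1,\dots,\eta_n}$ acts only on the open-leaf variables, and the only $\eta$-dependence of $w_B^{\BoldEta}(\vGa)$ sits in the open leaves $(\check{\cL}^{\BoldEta})^{\bsi(l_j)}_{k(l_j)}(l_j)=\frac{1}{\sqrt{-2}}\int_0^{\eta_j}\rho_q^*\theta_{\bsi(l_j)}^{k(l_j)}$, the operator $\fh$ distributes over the weight and acts separately on each open leaf. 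Noting that the prefactor $(-1)^{g(\vGa)-1}$ built into $w_B^{\BoldEta}$ equals $(-1)^{g-1}$ on $\bGa_{g,n}(\cX)$ and so cancels the $(-1)^{g-1}$ of the statement, it suffices to establish the term-by-term weight identity $w_A^{X}(\vGa)=(-1)^{g-1}r^n\,\fh\cdot w_B^{\BoldEta}(\vGa)$ for each $\vGa$.

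The comparison rests on two structural identifications, both being the content of the remodeling correspondence for toric CY orbifolds and here supplied by the genus-zero data. First, the \emph{R-matrix identity} $\check{R}_{\bsi'}^{\bsi}(z)=R_{\bsi'}^{\bsi}(-z)$ under the mirror map $\btau=\btau(q)$, $\sw_i=w_i\sv$, where $R$ is the A-model $R$-matrix of Theorem \ref{R-matrix} and $\check{R}$ the B-model one of Proposition \ref{prop:disk-R-matrix}. Second, the \emph{disk-data identity} $\frac{h_1^{\bsi}}{\sqrt{-2}}=\frac{1}{\sqrt{\Delta^{\bsi}(t)}}$, relating the leading coefficient $h_1^{\bsi}$ in the expansion of $y$ at the ramification point $P_{\bsi}$ to the quantum normalization $\Delta^{\bsi}(t)$. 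Granting these, the non-leaf comparisons are immediate: the edge weights $\cE^{\bsi,\bsi'}_{k,l}$ and $\check{B}^{\bsi,\bsi'}_{k,l}$ have identical closed forms once $\check{R}(z)=R(-z)$ is substituted; the vertex weights share the Hodge integral $\langle\prod_{h}\tau_{k(h)}\rangle_{g(v)}$, and the reciprocal exponents $2g(v)-2+\val(v)$ versus $2-2g(v)-\val(v)$ are reconciled by $\big(\frac{h_1^{\bsi}}{\sqrt{-2}}\big)^{2-2g(v)-\val(v)}=\big(\sqrt{\Delta^{\bsi}(t)}\big)^{2g(v)-2+\val(v)}$; and the dilaton leaf $(\check{\cL}^1)^{\bsi}_k=\frac{-1}{\sqrt{-2}}[z^{k-1}]\sum_{\bsi'}h_1^{\bsi'}\check{R}_{\bsi'}^{\bsi}(z)$ collapses to $(\cL^1)^{\bsi}_k$ after both substitutions.

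The genuinely new comparison, relative to the disk and annulus cases, is the open leaf. Using Proposition \ref{prop:disk-R-matrix} I write $\theta_{\bsi}^k=[w^k]\sum_{\bsi_0}\check{R}_{\bsi_0}^{\bsi}(w)\hat{\theta}_{\bsi_0}(w)$ and apply the identification \eqref{eqn:xi-in-disk} from the annulus section to each factor $\fh_{\eta_j}\cdot\frac{1}{\sqrt{-2}}\int_0^{\eta_j}\rho_q^*\hat{\theta}_{\bsi_0}(w)$. This converts the B-model open leaf into $\fh_{\eta_j}\cdot(\check{\cL}^{\BoldEta})^{\bsi}_k(l_j)=\frac{1}{r}[w^k]\sum_{\bsi_0,\bsi'}R_{\bsi_0}^{\bsi}(-w)\big(\txi^{\bsi'}(w,X_j)S(\hat{\phi}_{\bsi_0}(\btau(q)),\phi_{\bsi'})\big)_+$, where the R-matrix identity has been used; this is exactly $\frac{1}{r}(\widetilde{\cL}^{O})^{\bsi}_k(l_j)$. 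The resulting $n$ factors of $\frac{1}{r}$ are cancelled by the $r^n$ in the statement, so $r^n\,\fh\cdot\prod_j(\check{\cL}^{\BoldEta})^{\bsi(l_j)}_{k(l_j)}(l_j)=\prod_j(\widetilde{\cL}^{O})^{\bsi(l_j)}_{k(l_j)}(l_j)$. Assembling the vertex, edge, dilaton, and open-leaf identities yields the term-by-term weight equality, and summing over $\vGa\in\bGa_{g,n}(\cX)$ with the automorphism factors $|\Aut(\vGa)|$ completes the proof.

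The hard part is the R-matrix identity $\check{R}(z)=R(-z)$, together with the disk-data identity: the statement that the $R$-matrix produced by the Givental--Teleman classification of the semisimple Frobenius structure $QH^*_{\bT}(\cX)$ coincides, after $z\mapsto-z$ and the mirror map, with the $R$-matrix extracted from the Laplace transform of the Bergman kernel at the ramification points of $(C_q,X)$. This is precisely the analytic core of the remodeling conjecture, proved for general toric CY $3$-orbifolds in \cite{FLZ16}; in our situation it follows once one checks that the genus-zero mirror theorem (Theorem \ref{thm:mirror theorem}) and Proposition \ref{prop:canonical-in-flat} supply the matching oscillating-integral asymptotics at each $P_{\bsi}$. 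The disk-data identity in turn reduces to the disk mirror theorem (Theorem \ref{thm:disk-mirror-theorem}) and its $X\frac{d}{dX}$-derivatives, exactly as exploited in the annulus case (Theorem \ref{thm:main-theorem-annulus}).
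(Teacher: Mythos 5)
Your proposal is correct and follows essentially the same route as the paper's proof: matching the A-model and B-model graph sums (Corollary \ref{eqn:graph-open} versus Theorem \ref{eqn:B-graph-sum}) term by term, using the $R$-matrix identification $R(z)|_{\btau=\btau(q)}=\check R(-z)$ and the identity $h_1^{\bsi}/\sqrt{-2}=1/\sqrt{\Delta^{\bsi}(\btau)}$ (both imported from \cite{FLZ16}, as the paper does), and handling the open leaves via Proposition \ref{prop:disk-R-matrix} together with Equation \eqref{eqn:xi-in-disk}, with the $n$ factors of $1/r$ absorbed by $r^n$. Your write-up is in fact more explicit than the paper's terse proof, but the decomposition, the key inputs, and the final weight identity $w_A^{X}(\vGa)=(-1)^{g-1}r^n\,\fh\cdot w_B^{\BoldEta}(\vGa)$ are the same.
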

\begin{proof}
We prove the theorem by matching the graph sum formulas in Theorem \ref{eqn:B-graph-sum} and Corollary \ref{eqn:graph-open}. By \cite[Theorem 7.1]{FLZ16} and \cite[Section 4.4 and Lemma 4.3]{FLZ16}, we have
\begin{equation}
\label{eqn:R-identification}
R_{\bsi'}^{\spa \bsi}(z)|_{\btau=\btau(q) , \mathfrak Q=1,\sv=1}=\check R_{\bsi'}^{\spa \bsi} (-z).
\end{equation}
and
\begin{equation}
\frac{h_1^\bsi}{\sqrt{-2}}=\sqrt{\frac{1}{\Delta^\bsi(\btau)\vert_{\sv=1}}}.
\end{equation}
Then by Theorem \ref{eqn:B-graph-sum} and Corollary \ref{eqn:graph-open}, the weights in the graph sum match except for the open leaves. On the other hand, Proposition \ref{prop:disk-R-matrix} and Eq. \eqref{eqn:xi-in-disk} gives the identification of the open leaves. So we have
\[w^X_A(\vGa)|_{\sv=1}=\widetilde{w}^X_A(\vGa)|_{\fQ=1,\sv=1}=(-1)^{g-1}r^n (\mathfrak h\cdot w_B^\eta(\vGa)),
\]
which implies the theorem.
\end{proof}

\nocite{*}
\bibliographystyle{plain}
\bibliography{ref}

\end{document}